\newtheorem{thm}{Theorem}
\newtheorem{cor}{Corollary}
\newtheorem{lem}{Lemma}[section]
\newtheorem{prop}[lem]{Proposition}
\theoremstyle{definition}
\newtheorem{ex}[lem]{Example}
\newtheorem{defn}[lem]{Definition}
\newtheorem{rem}[lem]{Remark}
\numberwithin{equation}{section}
\title[The Ball-Box Theorem for Non-differentiable Tangent Subbundles ]{The Ball-Box Theorem for a Class of Non-differentiable Tangent Subbundles }
\author{S\.ina T\"urel\.i}
\begin{document}

\begin{abstract}{We show that an analogue of the Ball-Box Theorem holds true for a class of corank 1, non-differentiable tangent subbundles that satisfy a geometric condition. In the final section
of the paper we give examples of such bundles and an application to dynamical systems.}
\end{abstract}

\maketitle

\section{Introduction}
Sub-Riemannian geometry is a generalization of Riemannian geometry which is motivated by very physical and
concrete problems. It is the language for formalizing questions like: ``Can we connect two thermodynamical
states by adiabatic paths?"\cite{Car09}, ``Can a robot with a certain set of movement rules reach everywhere
in a factory?"\cite{Agr04}, ``Can a business man evade tax by following the rules that were set to avoid tax
evasion?", ``By adjusting the current we give to a neural system, can we change the initial phase
of the system to any other phase we want?"\cite{Li10}. However one drawback of current sub-Riemannian
geometry literature is that it almost exclusively focuses on the study of ``smooth systems, which is
sometimes too much to ask for a mathematical subject that has close connections with physical sciences.
For instance, one place where non-differentiable objects appear in a physically motivated mathematical
branch (and which is the main motivation of the authors) is the area of dynamical systems. More specifically
in (partially and uniformly) hyperbolic dynamics, bundles that are only H\"older continuous are quite
abundant and their sub-Riemannian properties (i.e. accessibility and integrability) play an important
role in the description and classification of the dynamics. The aim of this paper is to give a little nudge
to sub-Riemannian geometry in the direction of non-differentiable objects.

To get into more technical details we need some definitions. Let $\Delta$ be a $C^r$ tangent subbundle defined
on a smooth manifold $M$ and $g$ a metric on $\Delta$ (the triple $(M,\Delta,g)$ is called a $C^r$ sub-Riemannian manifold). We will always assume that $\Delta$ is corank $1$ and $dim(M)=n+1$ with $n \geq 2$. A
piecewise $C^1$ path $\gamma$ is said to be \textbf{\emph{admissible}} if it is a.e everywhere tangent to
$\Delta$ (i.e. $\dot{\gamma}(t) \in \Delta_{\gamma(t)}$ for $t$ a.e). We let
$\mathcal{C}_{pq}$ denote the set of length parameterized (i.e. $g(\dot{\gamma}(t),\dot{\gamma}(t))=1$ for $t$ a.e) admissible paths between $p$ and $q$. If
$\mathcal{C}_{pq} \neq \emptyset$ for all $p,q \in U\subset M$ then $\Delta$ is said to be
\textbf{\emph{controllable}} or  \textbf{\emph{accessible}} on $U$.
For smooth bundles, the \textbf{\emph{Chow-Rashevskii Theorem}} says that if $\Delta$ is everywhere
completely non-integrable (i.e. if the smallest Lie algebra $Lie(\Gamma(\Delta))$ generated by smooth sections of $\Delta$ is the whole tangent space at every point), then it is accessible \cite{Agr04}. In particular if
$\Gamma(\Delta)(p) + [\Gamma(\Delta),\Gamma(\Delta)](p) =T_pM$ then such a bundle is called called step 2, completely non-integrable at $p$ \footnote{The accessibility theorem for corank $1$, step 2, completely non-integrable differentiable bundles was actually already
formulated in 1909 by Carath$\acute{\text{e}}$odory with the aim of studying adiabatic paths in thermodynamical
systems \cite{Car09}).}. These are the $C^1$ analogues of the non-differentiable bundles that we will be interested in this paper.
We denote
$$
d_{\Delta}(p,q) = \inf_{\gamma \in\mathcal{C}_{pq}}\ell(\gamma),
$$
where $\ell(\cdot)$ denotes length with respect to the given metric $g$ on $\Delta$. $d_{\Delta}$ is called the
\textbf{\emph{sub-Riemannian metric}}. We let $B_{\Delta}(p,\epsilon)$ denote the ball of radius $\epsilon$
around $p$ with respect to the metric $d_{\Delta}$. Given $p \in M$ and a coordinate neighbourhood $U$,
coordinates $z=(x^1,\dots,x^n,y)$ are called \textbf{\emph{adapted}} for $\Delta$ if,
$p=0$, $\Delta_0 = \text{span}<\frac{\partial}{\partial x^i}|_0>_{i=1}^n$ and $\Delta_q$ is transverse to
$\mathcal{Y}_q = \text{span}<\frac{\partial}{\partial y}|_q>$ for every $q \in U$. Given such coordinates and $\alpha>0$, we define the coordinate weighted box as: $$
B_{\alpha}(0,\epsilon)=\{z=(x,y) \in U \quad | \quad |x^i| \leq \epsilon, \quad |y| \leq \epsilon^{\alpha}\},
$$
where $|\cdot|$ denotes the Euclidean norm with respect to the given adapted coordinates. A specialization of the fundamental the \textbf{\emph {Ball-Box Theorem}} \cite{Bel96,Mon02}, says that if $\Delta$ is a smooth, step 2, completely non-integrable bundle at a point $p$, then, given any smooth adapted coordinate system defined on some small enough neighborhood of $p$, there exists constants $K_1, K_2, \epsilon_0 >0$ such that for all $\epsilon < \epsilon_0$,
$$
B_2(0,K_1\epsilon) \subset B_{\Delta}(0,\epsilon) \subset B_2(0, K_2\epsilon).
$$
Note that here the lower inclusion $B_2(0,K_1\epsilon) \subset B_{\Delta}(0,\epsilon)$ implies accessibility around $p$. This specialized case is also known to hold true for $C^1$ bundles, see \cite{Gro96}.

There are several works ( \cite{Kar11}, \cite{MonMor12}, \cite{Sim10}) that try to generalize the Ball-Box Theorem to the setting of less regular bundles. Each has their own set of assumptions about the regularity or geometric properties of the bundle.

In \cite{Sim10}, they generalize the Ball-Box Theorem to H\"older
continuous bundles following a proof for $C^1$ bundles given in \cite{Gro96}.  The extra assumption is that the bundle $\Delta$ is accessible to start with.
Under this assumption, they prove that for the case of accessible, $\theta-$H\"older, codimension 1 bundle, the inclusion ``$B_{\Delta}(0,\epsilon) \subset B_{\alpha}(0, K_2\epsilon)$'' holds true with $\alpha=1+\theta$ (this inclusion translates as a certain ``lower bound" in the way they choose to express his results in \cite{Sim10}). Although this result does not assume any regularity beyond being  H\"older (which is the weakest regularity assumption in the works we compare), what is lacking is that there is no criterion for accessibility and without the lower inclusion one has no qualitative information about the shape or the volume of the sub-Riemannian ball.

In \cite{MonMor12} they prove the full Ball-Box Theorem under certain Lipschitz continuity assumptions for commutators of the vector-fields involved. In particular working with a collection of vector-fields $\{X_i\}_{i=1}^n$, they say that these vector-fields are completely non-integrable of step $s$ if their Lie derivatives $X_I=[X_{i_1},[\dots,[X_{i_{s-1}},X_{s}]]\cdots]$ up to $s$ iterations are defined and Lipschitz continuous and these $\{X_I\}_{I \in \mathcal{I}}$ span the whole tangent space at a point. And then under these assumptions (and some more less significant technical assumptions) they obtain the usual Ball-Box Theorem for step $s$, completely non-integrable collection of vector-fields.

In \cite{Kar11}, they consider $C^{1+\alpha}$ bundles with $\alpha>0$. In this case the bundle itself is differentiable and therefore the Lie derivatives $[X_i,X_j]$ are defined although only H\"older continuous. Therefore many of the tools of classical theory such as Baker-Hausdorff-Campbell formula are not applicable and extension of the theory already becomes non-trivial. Although the case of step 2, completely non-integrable $C^1$ bundles is already dealt with in \cite{Gro96}, the cited paper considers the general case and does not only study the Ball-Box theorem but several other important theorems from sub-Riemannian geometry.

This paper makes progress toward extending the Ball-Box Theorem to continuous bundles. We establish an analogue of the Ball-Box Theorem (of the step 2 case) and therefore Chow-Rashevskii Theorem for a certain class of non-integrable, continuous, corank 1 bundles that satisfy a geometric condition (explained in the next
section). In particular studying continuous bundles allows us to analyze what are the
important features for giving volume and shape to a sub-Riemannian ball. In the end we can conclude that
certain geometric features are sufficient for obtaining lower bounds on
the volume while regularity also plays an important role for the shape.  The authors believe that the methods presented in this paper can become useful for answering these questions in more generality and this is discussed in section \ref{sec-generalizations}.

After the proof of the main theorems, we give examples of bundles that satisfy these geometric properties and
yet are not differentiable (nor H\"older), we present an application to dynamical systems and we also study some interesting properties of this class of
bundles and pose some questions related to possible generalizations including
measurable bundles (measurable in terms of space variables, not just the time variables which is already
completely covered by classical control theory). After the examples we also compare our results with the other results discussed. But we can before hand say that all the results are somehow related to
each other but are not completely covered by neither (see in particular the discussion following Proposition \ref{prop-example}).

\medskip\noindent
\emph{Acknowledgments:} The author is greatly thankful to the anonymous referee for a lot of improvements, in particular several corrections and useful remarks regarding the distinction between step 2,
completely non-integrable and other cases in corank 1 case. The author was supported by ERC AdG grant no: 339523 RGDD. All the figures were created using \emph{Apache OpenOffice Draw}.

\subsection{Statement of the Theorems}

We assume that $\Delta$ is a corank 1, continuous tangent subbundle defined on a $n+1$ dimensional
smooth manifold $M$ with a given metric $g$ on $\Delta$. Except for some certain general definitions, we will carry out the proof in coordinate neighborhoods. The domain of the coordinate will be possibly a smaller Euclidean box where we work and all the supremums and infumums of the functions defined on this coordinate neighbourhood will be over this domain. Henceforth we denote the domain of any chosen coordinate system by $U$. By $|\cdot|$, we denote the Euclidean norm given by the coordinates and we identify the tangent spaces with $\mathbb{R}^{n+1}$. We denote by $\mathcal{A}^n_0(\Delta)(U)$ the space of continuous differential $n$-forms over $U$
that annihilate $\Delta$, which is seen as a module over $C(U)$. We let $\Omega^n_{r}(U)$ denote the space of $C^r$
differential n-forms over $U$, again as a module over $C^r(U)$. With this notation $\mathcal{A}^n_0(\Delta)(U)$ is a
submodule of $\Omega^n_{0}(U)$. Given a submodule $\mathcal{E}\subset \Omega^n_{r}(U)$, a local basis for this submodule on $U$ is a
finite collection of elements from $\mathcal{E}$, which are linearly independent over $C^r(U)$ and which span $\mathcal{E}$ over $C^r(U)$.  We use the induced norm on these spaces coming from the Euclidean norm to endow them with a Banach space structure. We use $|\cdot|_{\infty}$ and $|\cdot|_{\inf}$ denote the supremum and infimum of the norms of an object over the domain $U$ we are working with. More precisely if $\alpha$ is some $k-$differential form, $|\alpha|_{\infty}=\sup_{q \in U}|\alpha_q|$ and $|\alpha|_{\inf}=\inf_{q \in U}|\alpha_q|$ where generally a subscripted point denotes evaluation at that point. If $Y_1,\dots,Y_k$ are vector-fields then $\alpha(Y_1,\dots,Y_k)$ denotes the function obtained by contracting $\alpha$ with the given vector-fields. Therefore
 $|\alpha(Y_1,\dots,Y_k)|_{\infty}$ and  $|\alpha(Y_1,\dots,Y_k)|_{\inf}$ denotes the supremum and infimum over $U$ of the absolute value of this function. Occasionally when the need arises, we might make the distinction of evaluation points by the notation $\alpha_q(Y_1(q),\dots,Y_k(q))$ or even by  $\alpha_q(Y_1(p),\dots,Y_k(p))$ when we work in coordinates and identify tangent spaces with $\mathbb{R}^{n+1}$.
Finally given a sub-bundle $\Delta$ of $TU$, we denote
$$
|\alpha|_{\Delta}|_{\infty} = \sup_{q \in U, \ v_i \in \Delta_q, \ |v_1 \wedge \dots \wedge v_k|=1} |\alpha_q(v_1 \wedge \dots \wedge v_k)|,
$$
$$
m(\alpha|_{\Delta})_{\inf} = \inf_{q \in U, \ v_i \in \Delta_q, \ |v_1 \wedge \dots \wedge v_k|=1} |\alpha_q(v_1 \wedge \dots \wedge v_k)|.
$$
The first expression is the supremum over $U$ of the norms of $\alpha_q$ seen as linear maps acting on $\bigwedge^k (\Delta_q)$ while the second is the  infimum over $U$ of the conorms of $\alpha_q$.
On the passing we note that given any $v_1,\dots,v_k \in \Delta_q$,
$$
m(\alpha|_{\Delta})_{\inf} \leq  \frac{|\alpha_q(v_1,\dots,v_k)|}{|v_1 \wedge \dots \wedge v_k|} \leq |\alpha|_{\Delta}|_{\infty}.
$$

The next definition is the fundamental geometric regularity property that we require of our non-differentiable bundles in order to endow them with other
nice geometric and analytic properties:

\begin{defn}\label{defn-continuousexteriorderivative} A continuous differential $k$-form $\eta$ is said to
have a continuous exterior differential if there exists a continuous differential $k$+1-form $\beta$ such
that for every $k$-cycle $Y$ and $k$+1 chain $H$ bounded by it, one has that
$$
\int_Y \eta = \int_H \beta.
$$
If such a $\beta$ exists, we suggestively denote it as $d\eta$. A rank $k$ subbundle $\mathcal{E} \subset
\Omega^n_{0}(M)$ is said to be equipped with continuous exterior differential
at $p \in M$, if there exists a neighbourhood $V$ of $p$ on which $\{\eta_i\}_{i=1}^k$ is a local basis of sections of
$\mathcal{E}$ on $V$ and $\{d\eta_i\}_{i=1}^k$ are their continuous exterior differentials. We will occasionally refer to above property as Stokes property and also denote this triple by
$\{V,\eta_i,d\eta_i\}_{i=1}^k$.
\end{defn}

We denote by $\Omega^k_{d}(M)$ the space of all differential $k$-forms that have continuous exterior
differentials. Obviously $\Omega^k_{0}(M) \supset \Omega^k_{d}(M) \supset \Omega^k_{1}(M)\supset
\Omega^k_{2}(M)\supset \dots \ $. It will be the purpose of section \ref{sec-continuousexteriordifferential} to give
non-trivial examples (i.e. non-differentiable and non-H\"older) of such differential forms for $k=1$ and discuss their properties
to illustrate their geometric significance. However, our main theorems only deal with
sub-Riemannian properties of tangent subbundles defined by such differential 1-forms.

\begin{defn}\label{defn-integrable}Let $\Delta$ be a corank $k$, continuous, tangent subbundle. Assume $\mathcal{A}^1_0(\Delta)(M)$ is equipped with continuous exterior differential
$\{V,\eta_i,d\eta_i\}_{i=1}^k$  at $p_0$. We say that $\Delta$ is non-integrable at $p_0$ if
$$
(\eta_1 \wedge \eta_2 \dots \wedge \eta_k \wedge d\eta_{\ell})_{p_0} \neq 0,
$$
for some $\ell \in \{1, \dots ,k\}$.
\end{defn}

Note that if the bundle $\Delta$ was $C^1$ and corank $1$, then this condition would imply that $\Delta_{p_0} + [\Delta_{p_0},\Delta_{p_0}] =T_{p_0}M$, and therefore would be a step 2, completely non-integrable subbundle at $p_0$.
Therefore the corank $1$, non-integrable, continuous bundles defined above can be thought as of continuous analogues of these step $2$, completely non-integrable subbundles. First of our theorems is the analogue of the Chow-Rashevskii Theorem for continuous bundles whose annihilators are equipped with continuous exterior differentials:

\begin{thm}\label{thm-chow} Let $\Delta$ be a corank 1, continuous tangent subbundle. Let $p_0 \in M$ and
assume  $\mathcal{A}^1_0(\Delta)(M)$ is equipped with a continuous exterior differential $\{V,\eta,d\eta\}$ at
$p_0$. If $\Delta$ is non-integrable at $p_0$ then it is accessible in some neighborhood of $p_0$.
\end{thm}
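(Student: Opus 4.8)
The plan is to reduce accessibility to a quantitative non-integrability statement and then run the classical ``bracket-generating'' argument, but with the Lie bracket replaced entirely by the continuous $2$-form $d\eta$, which is the only substitute we have in the non-differentiable setting. Work in an adapted coordinate chart $z = (x^1,\dots,x^n,y)$ around $p_0 = 0$, chosen small enough that the triple $\{V,\eta,d\eta\}$ is defined on $U$ and $\eta = dy - \sum_i a_i(z)\,dx^i$ for continuous functions $a_i$ with $a_i(0) = 0$. In these coordinates $\Delta$ is spanned by the continuous vector fields $X_i = \frac{\partial}{\partial x^i} + a_i(z)\frac{\partial}{\partial y}$, $i = 1,\dots,n$. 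The non-integrability hypothesis $(\eta \wedge d\eta)_0 \neq 0$ translates into the existence of indices $i \neq j$ with $d\eta_0(X_i,X_j) \neq 0$; after a linear change of the $x$-coordinates we may assume $d\eta_0(X_1,X_2) \neq 0$, and by continuity $|d\eta(X_1,X_2)|_{\inf} \geq c > 0$ on a (possibly smaller) $U$.

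The core construction is the holonomy/return map of the $2$-plane field spanned by $X_1,X_2$. For a small parallelogram-type loop $\gamma_{\epsilon}$ in the $(x^1,x^2)$-variables — flow time $\epsilon$ along $X_1$, then $\epsilon$ along $X_2$, then $-\epsilon$ along $X_1$, then $-\epsilon$ along $X_2$ — the endpoint in general fails to close up in the $y$-direction. I would show the $y$-defect equals $\int_{\gamma_{\epsilon}} \eta$, which by Definition~\ref{defn-continuousexteriorderivative} (the Stokes property) equals $\int_{H_{\epsilon}} d\eta$ over a $2$-chain $H_{\epsilon}$ spanning $\gamma_{\epsilon}$; taking $H_{\epsilon}$ to be the obvious $X_1$-$X_2$ coordinate square, this integral is $d\eta_0(X_1,X_2)\,\epsilon^2 + o(\epsilon^2)$, hence of size comparable to $\epsilon^2$ and of a definite sign. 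Since $\Delta$-admissible concatenations of $X_1,X_2$-flow segments have length $\asymp \epsilon$, this produces admissible curves of length $O(\epsilon)$ reaching points whose $y$-coordinate has moved by an amount $\asymp \epsilon^2$ in a controlled direction, while the $x$-coordinates move freely at order $\epsilon$ (just flow along the $X_i$ directly). Combining a motion that adjusts $x$ with one that adjusts $y$ — and using that one can reach $+$ and $-$ in $y$ by traversing $\gamma_{\epsilon}$ or its reverse — the reachable set from $0$ by admissible paths of length $\leq C\epsilon$ contains a full coordinate box $\{|x^i| \leq k_1\epsilon,\ |y| \leq k_1\epsilon^2\}$ for some $k_1 > 0$ and all small $\epsilon$. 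In particular the reachable set has nonempty interior, and a standard openness/connectedness argument (the reachable set from any nearby point also has nonempty interior, and reachability is an equivalence relation on $U$) upgrades this to accessibility on a neighborhood of $p_0$.

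The main obstacle is the step ``$y$-defect of $\gamma_{\epsilon}$ equals $\int_{\gamma_{\epsilon}}\eta$, and the latter is $\asymp \epsilon^2$ with controlled sign,'' because we cannot differentiate $\Delta$: there is no Baker–Campbell–Hausdorff expansion and the flows of $X_1,X_2$ are only $C^1$ in time with continuous dependence on initial conditions, so the usual Taylor estimates for the holonomy are unavailable. The resolution is precisely the Stokes property: it converts the one-dimensional defect integral $\int_{\gamma_{\epsilon}}\eta$ into the two-dimensional integral $\int_{H_{\epsilon}} d\eta$ of a \emph{continuous} form, whose value on a square of side $\epsilon$ is $d\eta_0(X_1,X_2)\epsilon^2(1+o(1))$ by uniform continuity of $d\eta$ alone — no differentiability of $\Delta$ needed. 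The remaining care is bookkeeping: one must check that the loop $\gamma_{\epsilon}$ built from $X_1,X_2$-flows is genuinely admissible and length-parametrizable with length $\asymp\epsilon$, that the chain $H_{\epsilon}$ is a legitimate $2$-chain bounded by $\gamma_{\epsilon}$ with area $\asymp \epsilon^2$ and controlled geometry (so that $\int_{H_{\epsilon}}d\eta = d\eta_0(X_1,X_2)\,\text{area}(H_{\epsilon}) + o(\epsilon^2)$), and that the error terms are uniform in the basepoint over $U$. These are exactly the estimates I would organize into the quantitative ball inclusion, which is also what the companion Ball-Box theorem will need.
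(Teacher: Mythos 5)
Your strategy is the one the paper itself follows: Theorem \ref{thm-chow} is deduced from the lower inclusion in Theorem \ref{thm-ballbox}, and that inclusion is proved by exactly your mechanism --- a four-segment loop along the flows of $X_i,X_j$, the Stokes property converting the $y$-defect into a surface integral of $d\eta$, the uniform lower bound $|d\eta(X_i,X_j)|_{\inf}>0$ giving a defect of size comparable to $\epsilon^2$ with controlled sign, free motion in the $x$-directions along the adapted basis, and an intermediate-value argument in the loop size to hit a prescribed target (this is Proposition \ref{prop-stokessubriemann} combined with Lemmas \ref{lem-propertiesofW} and \ref{lem-final}).

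There is, however, one genuine gap. You treat $e^{tX_1},e^{tX_2}$ as well-defined flows with continuous dependence on initial conditions, but the $X_i$ are merely continuous, so Peano gives existence of integral curves and nothing more: no uniqueness, hence no single-valued endpoint map. Your argument needs such a map in two places. The intermediate-value step (``one can reach $+$ and $-$ in $y$, hence everything in between'') requires the endpoint of the loop to depend continuously on the loop size, which is meaningless if each flow segment is only one of possibly many integral curves; and the construction of the $n$-dimensional reachable set in the $x$-directions (the paper's $\mathcal{W}_{\epsilon}$, the image of the composed flows $T_\epsilon$) requires that composition to be a well-defined injective map. The paper closes this gap with Proposition \ref{prop-uniquebasis}: the Stokes property itself forces the adapted basis to be uniquely integrable with a $C^1$ family of solutions (two distinct integral curves through a point would bound a thin strip of area $O(t\,h(t))$ on which $|\int d\eta|$ is too small to account for $|\int_{v_t}\eta|\ge|\eta(\partial_y)|_{\inf}h(t)$, a contradiction for small $t$; the $C^1$ statement invokes Hartman). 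This is not cosmetic: the paper flags the loss of unique integrability as the principal obstruction to pushing the result to general H\"older bundles (subsection \ref{subsec-holder}). A smaller imprecision: the concatenation of the four flow segments is not a $1$-cycle (it fails to close in $y$), so $\int_{\gamma_\epsilon}\eta$ as you write it vanishes identically, $\eta$ annihilating every tangent vector of $\gamma_\epsilon$; one must close the loop with a vertical segment $\beta$, apply the Stokes property to the closed curve, and read the defect off from $\int_\beta\eta$, keeping track of the error term $|c|\le 4\ell\varepsilon\xi|d\eta|_{\infty}$ that measures, via the modulus of continuity, the discrepancy between the actual spanning chain and the flat parallelogram in $\bar{X}_q$. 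This is precisely how Proposition \ref{prop-stokessubriemann} is organized.
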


A direct corollary is

\begin{cor}  Let $\Delta$ be a corank 1, continuous tangent subbundle defined on a connected manifold $M$.
If $\mathcal{A}^1_0(\Delta)(M)$ is equipped with a continuous exterior differential and non-integrable at every $p
\in M$, then $\Delta$ is accessible on $M$. \end{cor}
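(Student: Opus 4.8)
The plan is to deduce this from Theorem~\ref{thm-chow} by the standard local-to-global (``propagation'') argument for accessibility. Since the statement is now global while Theorem~\ref{thm-chow} is local, essentially all of the analytic work has already been done, and only a connectedness argument remains. For $p \in M$ set
$$
\mathcal{O}(p) = \{q \in M : \mathcal{C}_{pq} \neq \emptyset\},
$$
the \emph{orbit} of $p$. The two facts I would use about admissible paths are that they concatenate and can be traversed backwards: if $\gamma_1 \in \mathcal{C}_{pq}$ and $\gamma_2 \in \mathcal{C}_{qr}$, then laying $\gamma_2$ after $\gamma_1$ (the arc-length parameter is additive, and a corner at the junction point is permitted by piecewise $C^1$-ness) yields an element of $\mathcal{C}_{pr}$, while $t \mapsto \gamma_1(\ell(\gamma_1)-t)$ lies in $\mathcal{C}_{qp}$. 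Hence $q \in \mathcal{O}(p) \Leftrightarrow p \in \mathcal{O}(q)$, and $q \in \mathcal{O}(p)$ forces $\mathcal{O}(q) = \mathcal{O}(p)$; moreover each $p$ lies in its own orbit, since by Theorem~\ref{thm-chow} applied at $p$ the bundle $\Delta$ is accessible on some neighbourhood of $p$, so in particular $\mathcal{C}_{pp}\neq\emptyset$. Thus the orbits partition $M$.

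Next I would show that every orbit is open. Fix $p$ and let $q \in \mathcal{O}(p)$. Since $\Delta$ is non-integrable at $q$ and $\mathcal{A}^1_0(\Delta)(M)$ is equipped with a continuous exterior differential at $q$, Theorem~\ref{thm-chow} supplies a neighbourhood $W_q$ of $q$ on which $\Delta$ is accessible; in particular $\mathcal{C}_{qr}\neq\emptyset$ for every $r\in W_q$, and concatenating such a path with one in $\mathcal{C}_{pq}$ gives $\mathcal{C}_{pr}\neq\emptyset$. Hence $W_q\subseteq\mathcal{O}(p)$, so $\mathcal{O}(p)$ is open. Because $M$ is the disjoint union of orbits, the complement $M\setminus\mathcal{O}(p)$ is a union of orbits and is therefore also open; equivalently, if $q\notin\mathcal{O}(p)$ and $W_q$ is chosen as above, then any $r\in W_q\cap\mathcal{O}(p)$ would yield, by concatenating a path in $\mathcal{C}_{pr}$ with one in $\mathcal{C}_{rq}$, a path in $\mathcal{C}_{pq}$, a contradiction. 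So $\mathcal{O}(p)$ is a non-empty clopen subset of the connected manifold $M$, whence $\mathcal{O}(p)=M$. As $p$ was arbitrary, $\mathcal{C}_{pq}\neq\emptyset$ for all $p,q\in M$, which is precisely accessibility of $\Delta$ on $M$.

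I do not expect a genuine obstacle here: Theorem~\ref{thm-chow} carries all the real content, and what is left is the routine observation that accessibility generates an open equivalence relation, so that on a connected manifold there is a single class. The only points worth stating with care are that admissible, length-parameterized paths can be concatenated and reversed within the classes $\mathcal{C}_{pq}$ — both immediate from the definitions in Section~1 — so that an orbit and its complement are simultaneously open.
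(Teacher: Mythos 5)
Your proof is correct and is exactly the standard open-orbit/connectedness argument that the paper intends: it states this result as a direct corollary of Theorem \ref{thm-chow} without writing out a proof, and your filling-in (orbits partition $M$, are open by the local accessibility from Theorem \ref{thm-chow}, hence clopen, hence all of $M$) is the canonical way to supply the missing details. Your care about reflexivity of the orbit relation, using local accessibility rather than a constant path (which would not be length-parameterized), is a nice touch.
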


Our next theorem will be about metric properties of such a bundle, namely we will give an analogue of
the Ball-Box Theorem (specialized to case of differentiable step 2, completely non-integrable tangent subbundles). Theorem \ref{thm-chow} will then be a consequence of this theorem. We say that a bundle
$\Delta$ has modulus of continuity $\omega: s \rightarrow \omega_s$ if in every coordinate neighborhood there exists a constant $C>0$ such that, it has a basis of
sections $\{Z_i\}_{i=1}^n$ whose elements have modulus of continuity ${C}\omega_s$ with respect to the Euclidean norm of the coordinates. More explicitly, these basis of sections satisfy, in coordinates,
$$
|Z_i(p)-Z_i(q)| \leq C\omega_{|p-q|},
$$
for all $i=1,\dots,n$. We assume that moduli of continuity are
increasing and therefore $\omega_t = \sup_{s\leq t}\omega_s$.

Now we remind the notion of adapted coordinates that was introduced informally in the beginning of this section:
\begin{defn}\label{def-adaptedcoordinates}Given a bundle $\Delta$, a coordinate system $(x^1,\dots ,x^n,y^1,\dots,$ $y^m)$ with a domain $U$ around $p\in M$ is said to be adapted to $\Delta$ if $p=0$, $\Delta_0= \text{span}<\frac{\partial}{\partial
x^i}|_0>_{i=1}^n$ and $\Delta_q$ is transverse to  $\text{span}<\frac{\partial}{\partial
y^i}|_q>_{i=1}^m$ for all $q \in U$.
\end{defn}

Then, given some adapted coordinates $(x^1,\dots,x^n,y)$ and the adapted basis for the corank $1$ bundle $\Delta$, we define:
$$
D^{\omega}_{2}(0,K_1,\epsilon)=\{z=(x,y) \in U \quad | \quad  |x| + \sqrt{K_1(|x|\tilde{C}\omega_{2|x|} +|y|)} \leq \epsilon,
$$
$$
H^{\omega}_{2}(0,K_2,\epsilon)=\{z=(x,y) \in U \quad | \quad  |x| \leq \epsilon, \quad |y| \leq K_2\epsilon^{2}+
|x|\tilde{C}\omega_{2|x|}\},
$$
$$
\mathcal{B}_{2}(0,K_3,\epsilon)=\{z=(x,y) \in U \quad | \quad |x| \leq \epsilon, \quad |y| \leq K_3\epsilon^{2}\},
$$
where $|\cdot|$ is the Euclidean norm given by the coordinates and $|x| = \sum_{i=1}^n |x_i|$. Here $H$ stands for hourglass and $D$ for diamond. Note that in the case $\omega_t = t^{\theta}$ (i.e. H\"older continuous)
the shape of $H_2^{\omega}$ indeed looks like an hourglass and $D^{\omega}_{2}$ looks like a diamond with sides that are bent inwards and becomes more linear as $x$ increases (since $K_1^{\frac{1}{2}}|x|^{\frac{1+{\theta}}{2}}$ dominates $|x|$ near $0$, see figure \ref{fig-hourglass}).The ball $\mathcal{B}_{2}(0,K_3,\epsilon)$ is an analogue of the usual box in smooth sub-Riemannian geometry with the exception $y$ direction is allowed to have its own scaling factors $K_3$. We belive that with a careful geometric analysis, these constants turn out to have geometric significance and that is why we decided to define a more generalized ball like this.

In an adapted coordinate system with a domain $U$, we can also define a basis of sections $\Delta$ of the form
$$
X_i = \frac{\partial}{\partial x^i} + a_i(x,y)\frac{\partial}{\partial y},
$$
where $a_i(x,y)$ are continuous functions. If $\Delta$ has modulus of continuity $\omega$, then it is possible to show that, the functions $a_i$ also have modulus of continuity $\tilde{C}\omega$ on $U$ with respect to $|\cdot|$, with some multiplicative constant $\tilde{C}>1$ possibly depending on $U$ and on the chosen coordinates. The assumption of non-integrability at $p_0$ would then mean that
there exists $i,j \in \{1,\dots,n\}$, $i \neq j$ and a domain $U$ such that $|d\eta(X_i,X_j)|_{\inf}>0$. Therefore we also have
$
m(d\eta|_{\Delta})_{\inf}>0.
$
We will also need to define a constant $c$. This constant is later explained in Lemma \ref{lem-Gromov} (and the remark \ref{rem-Gromov} that follows), which is an independent Lemma from \cite{Gro83} (sublemma 3.4B, see also Corollary $2.3$ in \cite{Sim10}).  Finally for a fixed adapted coordinate system with its Euclidean norm, we let $d_g\geq 1$ denote a constant such that, for all $v \in \Delta_p$ and for all $p \in U$:
$$
\frac{1}{d_g}\sqrt{g(v,v)} \leq |v| \leq d_g \sqrt {g(v,v)}.
$$
Then, we can state the next main theorem:

\begin{thm}\label{thm-ballbox} Let $\Delta$ be a corank 1, continuous bundle with modulus of continuity $\omega$. Let $p_0 \in M$ and assume
$\mathcal{A}^1_0(\Delta)(M)$ is equipped with a continuous exterior differential $\{V,\eta,d\eta\}$ at $p_0$ and
that $\Delta$ is non-integrable at $p_0$. Then, for any adapted coordinate system, there exists a domain $U$ and constants $\epsilon_0,K_1,K_2>0$ such that, for all $\epsilon < \frac{\epsilon_0}{2nd_g}$:
\begin{equation}\label{eq-inclusions1}
D^{\omega}_2(0, \frac{1}{K_1},\frac{1}{4d_g}\epsilon) \subset B_{\Delta}(0,\epsilon) \subset H^{\omega}_2(0,K_2, 2nd_g\epsilon),
\end{equation}
where $K_1,K_2>0$ are constants given by
$$
\frac{1}{K_1} = 42\frac{|\eta(\partial_y)|_{\infty}}{m(d\eta|_{\Delta})_{\inf}},
$$
$$
K_2  = 42(1+2n)^2c \frac{ |d\eta|_{\Delta}|_{\infty}}{|\eta(\partial_y)|_{\inf}}.
$$
Moreover for each such smooth adapted coordinate system, there exists a $C^1$ adapted coordinate system such that,
$$
\mathcal{B}_2(0,K_1, \frac{\epsilon}{4d_g}) \subset B_{\Delta}(0,\epsilon) \subset \mathcal{B}_2(0,K_2,2nd_g\epsilon).
$$

\end{thm}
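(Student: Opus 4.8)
The plan is to prove the two inclusions
$D^{\omega}_2(0,\tfrac{1}{K_1},\tfrac{1}{4d_g}\epsilon)\subset B_{\Delta}(0,\epsilon)$
and
$B_{\Delta}(0,\epsilon)\subset H^{\omega}_2(0,K_2,2nd_g\epsilon)$
separately, working throughout in a fixed adapted coordinate system with the adapted basis
$X_i=\partial_{x^i}+a_i(x,y)\partial_y$, and then to deduce the $C^1$-coordinate statement by a change of variables that straightens the $y$-coordinate. Before anything else I would record the elementary estimates: the $a_i$ have modulus of continuity $\tilde C\omega$ with $a_i(0)=0$, so $|a_i(x,y)|\le\tilde C\omega_{|x|+|y|}$; the metric comparison gives $\tfrac{1}{d_g}\sqrt{g}\le|\cdot|\le d_g\sqrt{g}$ on $\Delta$; and the non-integrability hypothesis gives $m(d\eta|_\Delta)_{\inf}>0$ together with a pair $i\neq j$ for which $|d\eta(X_i,X_j)|_{\inf}>0$ on a sufficiently small domain $U$. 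These are the only inputs about $\Delta$ we use.

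\textbf{The upper inclusion (reachable set is contained in the hourglass).}
Let $\gamma$ be a length-parametrized admissible path of length $<\epsilon$ from $0$. Writing $\gamma(t)=(x(t),y(t))$ and $\dot\gamma=\sum_i u_i(t)X_i$, the metric bound forces $\sum_i|u_i|\le$ (a constant times) the $g$-speed, hence the $x$-component travels at most $2nd_g\epsilon$ in $\ell^1$-norm — this gives the first defining inequality $|x|\le\epsilon$ of $H^{\omega}_2$ (up to the stated constant $2nd_g$). For the $y$-component the key is Stokes: along the admissible path $\eta(\dot\gamma)=0$, so $y$ changes only because $\partial_y$ is not annihilated by $\eta$, and $\dot y=-\sum_i u_i a_i/\eta(\partial_y)$ up to lower order; more robustly, I would close up $\gamma$ by a short "vertical" segment and a retraction inside $\Delta_0$ to form a $1$-cycle $Y$ bounding a $2$-chain $H$ lying in a box of $x$-size $O(\epsilon)$, apply $\int_Y\eta=\int_H d\eta$, and estimate $|\int_H d\eta|\le |d\eta|_{\Delta}|_{\infty}\cdot\mathrm{Area}(H)+(\text{an error controlled by }|x|\tilde C\omega_{2|x|})$. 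The area of $H$ is $O(\epsilon^2)$ because both "horizontal" extents are $O(\epsilon)$, so $|y|\le K_2\epsilon^2+|x|\tilde C\omega_{2|x|}$ with $K_2$ as claimed; the factor $(1+2n)^2$ and the $c$ from Lemma \ref{lem-Gromov} enter in bounding the area of the filling $2$-chain in terms of the lengths of its boundary arcs.

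\textbf{The lower inclusion (diamond is reachable).}
Fix a target $z=(x,y)\in D^{\omega}_2(0,\tfrac1{K_1},\tfrac1{4d_g}\epsilon)$. First steer the $x$-coordinate to its target value by a concatenation of flow segments of the $X_i$; this costs length $\lesssim d_g|x|$ and, as a side effect, moves $y$ by some amount that is off the target by at most a controlled quantity of order $|x|\tilde C\omega_{2|x|}+|y|$. To correct the remaining $y$-discrepancy $\delta$ one runs a small "parallelogram" (commutator-type) loop built from the pair $X_i,X_j$ with $|d\eta(X_i,X_j)|_{\inf}>0$: by Stokes the net $y$-displacement of a loop of side $s$ equals $\int_H d\eta$ over the enclosed $2$-chain, which is $\asymp m(d\eta|_\Delta)_{\inf}\,s^2$ up to the continuity error, while the loop has length $\asymp s$. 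Hence a loop of side $s\asymp\sqrt{|\delta|\,|\eta(\partial_y)|_{\infty}/m(d\eta|_\Delta)_{\inf}}$ fixes the $y$-coordinate at length cost $\asymp\sqrt{K_1'(|x|\tilde C\omega_{2|x|}+|y|)}$; adding the two contributions and matching constants yields $\ell(\gamma)\le|x|+\sqrt{K_1(|x|\tilde C\omega_{2|x|}+|y|)}\le\epsilon$, i.e. $z\in B_{\Delta}(0,\epsilon)$, with $\tfrac1{K_1}=42|\eta(\partial_y)|_{\infty}/m(d\eta|_\Delta)_{\inf}$. The numerical constant $42$ is simply the accumulated product of the elementary steering/closing-up factors.

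\textbf{From the $\omega$-boxes to the genuine box in $C^1$ coordinates.}
Given the smooth adapted system, change coordinates $\tilde y=y-\psi(x)$ where $\psi$ is chosen (of class $C^1$, with $\psi(0)=0$, $D\psi(0)=0$) so that in the new coordinates the "error term" $|x|\tilde C\omega_{2|x|}$ is absorbed into the $\epsilon^2$-scale — concretely one can take $\psi$ built from the primitive of the $a_i$ along the coordinate axes, which is exactly $C^1$ when the $a_i$ are merely continuous but not better. In the new coordinates $H^{\omega}_2$ becomes comparable to $\mathcal B_2(0,K_2,\cdot)$ and $D^{\omega}_2$ to $\mathcal B_2(0,K_1,\cdot)$, and the inclusions of \eqref{eq-inclusions1} transport verbatim with the same constants up to the fixed metric factors.

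\textbf{Main obstacle.}
The genuinely delicate point is the lower inclusion: with no differentiability of $\Delta$ one cannot invoke Baker–Campbell–Hausdorff or any Taylor expansion of flows, so the quadratic lower bound $\int_H d\eta\gtrsim m(d\eta|_\Delta)_{\inf}s^2$ for the $y$-displacement of a commutator loop must be extracted purely from the Stokes property plus the modulus-of-continuity control, and one must simultaneously ensure the error terms picked up while steering $x$ do not swamp $\delta$ — this is where the geometric condition (existence of a continuous $d\eta$) does all the work and where Lemma \ref{lem-Gromov} is needed to bound the area of the filling $2$-chain by the squared length of its boundary.
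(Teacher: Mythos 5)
Your overall route is essentially the one the paper takes: steer the $x$-coordinate along concatenated flow segments of the adapted basis, extract the quadratic vertical displacement of a commutator-type parallelogram loop from the Stokes identity $\int_Y\eta=\int_H d\eta$ together with the modulus-of-continuity error, use Gromov's filling inequality (Lemma \ref{lem-Gromov}) to bound the area of the $2$-chain in the upper inclusion, and obtain the $C^1$ coordinates by subtracting the graph of the accessible reference surface from $y$. Your bookkeeping for the upper inclusion (closing up against the flat plane $\Delta_0$ and letting the $|x|\tilde C\omega_{2|x|}$ term appear as the non-annihilation error of $\eta$ on the flat retraction) is a harmless variant of the paper's comparison against the admissible surface $\mathcal{W}_{\epsilon_0}$.

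There is, however, one genuine missing ingredient: you never establish, or even flag the need for, unique integrability of the adapted vector fields $X_i=\partial_{x^i}+a_i\partial_y$ and the $C^1$ dependence of their flows on initial data and time. For merely continuous $a_i$, Peano gives existence but not uniqueness, so without this step your ``steering'' curves and your parallelogram loops are not canonically defined, and --- more seriously --- the endpoint of the loop need not depend continuously on its side length $s$, which is exactly what is required to land \emph{exactly} on the target $y$-value by an intermediate-value argument rather than merely overshooting it. The paper devotes Proposition \ref{prop-uniquebasis} to deriving uniqueness and $C^1$ regularity of the solution family from the Stokes property itself (via an area estimate on thin strips, and then Hartman's theorem); this is one of the two pillars of the proof. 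The same omission undermines your $C^1$-coordinates step: the assertion that the primitive of the $a_i$ along coordinate axes is ``exactly $C^1$'' when the $a_i$ are merely continuous is not justified --- joint continuity of all partial derivatives does not follow from integrating a continuous function in one variable. In the paper the flattening map $\phi(x,y)=(x,y-T_{\epsilon_0}(x))$ is $C^1$ precisely because the composed flow $T_{\epsilon_0}$ is $C^1$, which again rests on the continuous exterior differential through Proposition \ref{prop-uniquebasis}. Everything else in your outline matches the paper's argument.
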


\begin{rem}  This theorem is a generalization of the smooth Ball-Box Theorem on codimension 1, step 2, completely non-integrable bundle case. Indeed if $\Delta$ is smooth then
$\mathcal{A}^1_0(\Delta)(M)$ is equipped with continuous exterior differential and the non-integrability definition given in \ref{defn-integrable} coincides with the step 2, completely non-integrable case.
Also since $\omega_{2|x|} = 2|x|$, one can check the following:
$$
\mathcal{B}_2(0,K_1,\epsilon) \subset D_2^{\omega}(0, \frac{1}{K_1}, (1+\sqrt{\frac{2\tilde{C}}{K_1}+1}) \epsilon),
$$
$$
H_2(0,K_2,\epsilon) \subset \mathcal{B}_2(0,K_2+2\tilde{C},\epsilon).
$$
So one gets in the case of $C^1$ bundles:
$$
\mathcal{B}_2(0,\frac{1}{K_1}, (1+\sqrt{\frac{2\tilde{C}}{K_1}+1})^{-1}\frac{\epsilon}{4d_g}) \subset B_{\Delta}(0,\epsilon) \subset  \mathcal{B}_2(0,K_2+2\tilde{C},2nd_g\epsilon),
$$
which is the usual Ball-Box relations in the smooth sub-Riemannian geometry (apart from the fact that we use a generalized version of the usual boxes which contain and are contained in usual boxes with different constants). Note also that if $\Delta$ is a Lipschitz continuous bundle then again we have that $\omega_t =t$. Therefore this theorem also says that if a Lipschitz continuous bundle has an annihilator equipped with a continuous exterior differential, then the usual Ball-Box relations also hold true for this bundle.

We also would like to note that the statement about the existence of $C^1$ adapted coordinates has a much more geometric interpretation. However we can only explain it after certain objects are constructed. This is explored in subsubsection \ref{ssection-spread} (see figure \ref{fig-ballbox}).
\end{rem}

\begin{rem}The smooth sub-Riemannian geometry is usually shy of explicit constants and the explicit description of the neighbourhood $U$ for the Ball-Box Theorem.  This makes the results particularly hard to apply on a sequence of $C^1$ bundles,
which might be used to approximate a continuous bundle. Therefore we believe that this theorem can also be seen as a version of the Ball-Box Theorem with explicit constants. The explicit constants by themselves are not enough however, but it is also essential to understand how the size of $U$ depends on regularity properties of $\Delta$. The explicit relations are listed in subsection \ref{sssection-fixU}. As far as we are aware this is one of the few proofs that pays particular attention to these details.

\end{rem}

\begin{figure}
  \centering
  \includegraphics[width=300px]{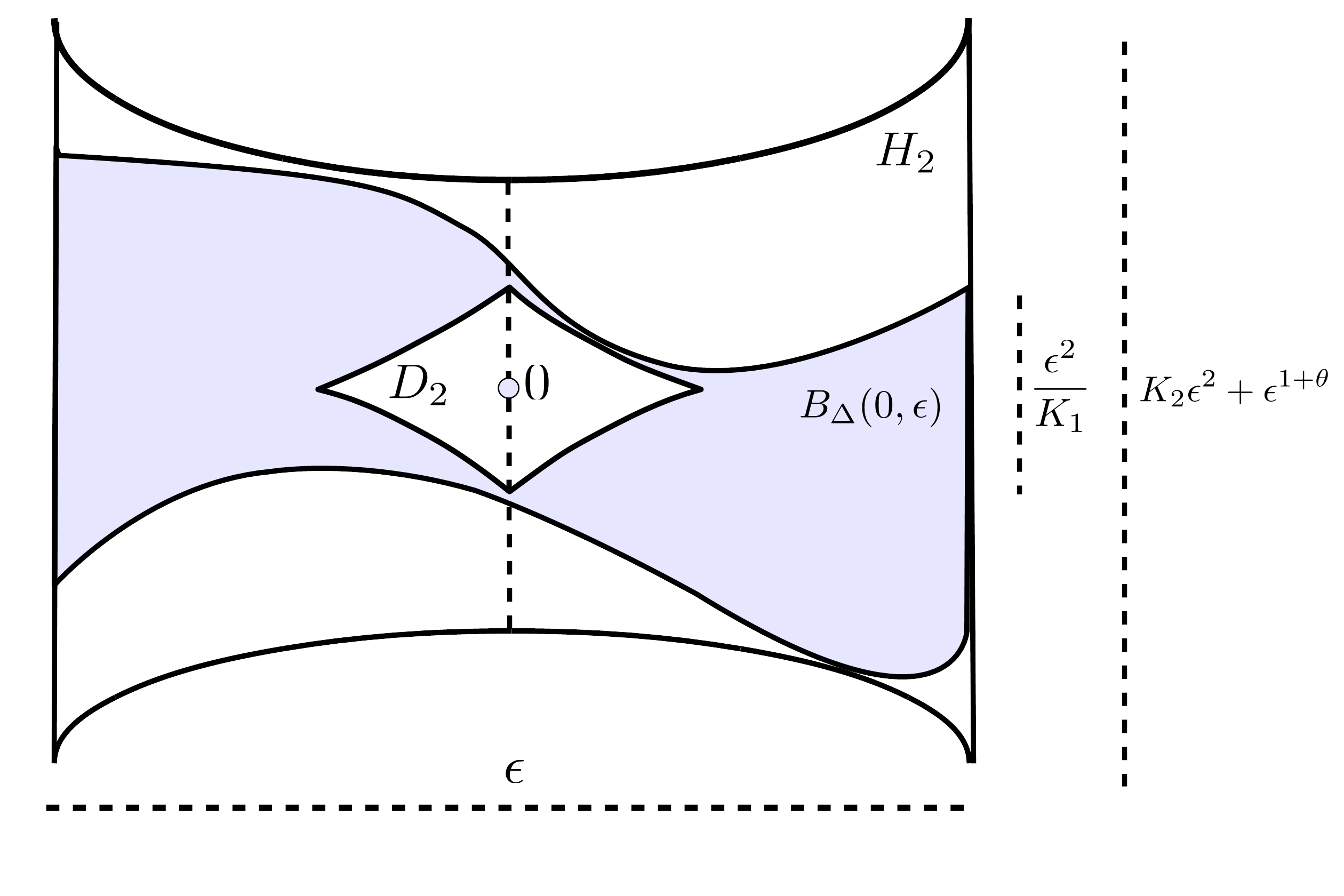}
  \caption{Pictorial representation of Theorem \ref{thm-ballbox} with
  $\omega(t)=t^{\theta}$.}\label{fig-hourglass}
\end{figure}

\subsection{Organization of the Paper}
In this subsection we describe the layout of the paper and the main ideas of the proofs.

First note that Theorem \ref{thm-ballbox} implies  Theorem \ref{thm-chow} therefore it is sufficient to prove the former. Section \ref{sec-proof} contains the proof of Theorem \ref{thm-ballbox}.
The  proof of this theorem has two main ingredients. These are the fundamental tools that we use repeatedly and are therefore are proven in a seperate subsections  \ref{ssec-uniqueness} and \ref{ssec-geometric}.
First ingredient is Proposition \ref{prop-uniquebasis}, where we prove that the adapted basis is uniquely integrable. This is
only due to existence of a continuous exterior differential. The second ingredient is Proposition \ref{prop-stokessubriemann} which quantifies the amount admissible curves travel in the $\partial_y$ direction by a certain surface integral of $d\eta$. This again only assumes existence of the continuous exterior differential and Proposition \ref{prop-uniquebasis}. This proposition can be seen as a generalization of certain beautiful ideas from \cite{Arn89} (see Section 36 of Chapter 7 and Appendix 4).

Then, in subsection \ref{subsec-ballboxproof} we prove Theorem \ref{thm-ballbox} using Propositions \ref{prop-uniquebasis} and \ref{prop-stokessubriemann}. The main idea is to first construct certain, accessible $n$ dimensional manifolds $\mathcal{W}_{\epsilon}$ (see Lemma \ref{lem-propertiesofW}) and study how the sub-Riemannian balls are spread around these manifolds (see Lemma \ref{lem-distancefromW}).

To summarize the main ideas of the proof of Theorem \ref{thm-ballbox}, we can say

\begin{itemize}
  \item The existence of $d\eta$ and the condition that $\eta \wedge d\eta \neq 0$ gives volume to the
      sub-Riemannian ball.
  \item The loss of regularity in the bundle may cause the sub-Riemannian ball to bend which results
  in the outer sub-Riemannian ball being distorted and the inner one getting smaller.
\end{itemize}

The two main tools that we repeatedly use (Proposition  \ref{ssec-uniqueness} and \ref{ssec-geometric}) are obtained via application of Stokes property,
thus establishing it as the germ of many geometric and analytic properties of vector fields and bundles.

In section \ref{sec-continuousexteriordifferential}, we give some examples of bundles whose annihilators
are equipped with continuous exterior differentials and which are non-integrable on neighbourhoods where they are non-differentiable. We also study some of the
properties of such bundles to emphasize that having a continuous exterior differential is a
geometrically very relevant property, yet not as strong as being $C^1$ in terms of regularity. We compare the results of this paper to the several other results we have explained in the introduction.

Finally in section \ref{sec-generalizations} we sketch some thoughts on some possible generalizations that relax the
conditions required for the theorems and some comments on how to possibly proceed with the proof in special cases of higher corank bundles.

\section{The Proof} \label{sec-proof}

In the next two subsections we prove the two technical propositions: Proposition  \ref{prop-uniquebasis} and Proposition \ref{prop-stokessubriemann}.

\subsection{ Proposition \ref{prop-uniquebasis}}\label{ssec-uniqueness}

Lets remind the definition of $\{X_i\}_{i=1}^n$. Given $p_0 \in M$, assume we are given any adapted coordinates $\{x^1,\dots,x^n,y\}$ with some domain $U$ such that $\frac{\partial}{\partial y}$ is
everywhere transverse
to $\Delta$ on $U$.  Occasionally we will denote $\partial_i = \frac{\partial}{\partial x_i}$, $\partial_y = \frac{\partial}{\partial y}$, $\mathcal{X}^k_p= \text{span}\frac{\partial}{\partial x^k}|_p$ and $\mathcal{Y}_p= \text{span}\frac{\partial}{\partial y}|_p$ . Then, it is easy to show that in this
domain, sections of $\Delta$ admit a basis of the form
\begin{equation}\label{eq-adapted}
X_i = \frac{\partial}{\partial x^i}+a_i(x,y)\frac{\partial}{\partial y},
\end{equation}
where $a_i$ have the same modulus of continuity as $\Delta$ up to multiplication by some constant $\tilde{C}>0$.
Note that the adapted coordinate assumption also means $X_i(0)=\partial_i$. We call such a basis an adapted basis. 

\begin{rem}It is also easy to show that such an adapted basis also exists in
higher coranks but of the form
$$
X_i = \frac{\partial}{\partial x^i}+\sum_{j=1}^m a_{ij}(x,y)\frac{\partial}{\partial y^j}.
$$
\end{rem}

If $X$ is any vector-field defined on some neighborhood $U_1$, we call it integrable if for all $p \in U_1$ there exists
$\epsilon_p$ and a $C^1$ curve $\gamma: [-\epsilon_p, \epsilon_p] \rightarrow U_1$ such that $\gamma(0)=p$
and $\dot{\gamma}(t) = X(\gamma(t))$ for all $t \in  [-\epsilon_p, \epsilon_p]$ (these curves are called
integral curves passing through $p$). By Peano's Theorem, continuous vector-fields are always integrable.

We call it uniquely integrable if it is integrable and if $\gamma_1$ and $\gamma_2$ are two integral curves which intersect, then each intersection
point is contained in a relatively open (in both integral curves) set.
In this case, there exists a unique maximal integral curve of $X$ (not to be confused with maximal and minimal solutions of non-uniquely integrable vector-fields), starting at $q$ and defined on the interval $[-\epsilon_q,\epsilon_q]$. We denote this integral curve by $t \rightarrow e^{tX}(q)$.
Unique integrability is more stringent
and commonly known sufficient conditions are $X$ being Lipschitz or $X$ satisfying the Osgood criterion.

We say that an integrable vector-field $X$ defined on $U_1$ has $C^r$ family of solutions if there exists some $U_2 \subset U_1$, an $\epsilon_0$ such that, for all $p \in U_2$,
there exists an integral curve passing through $p$ with $\epsilon_p \geq \epsilon_0$ and such that this choice of solutions seen as maps from $[-\epsilon_0,\epsilon_0] \times U_2 \rightarrow U_1$ are $C^r$.

\begin{prop}\label{prop-uniquebasis} Assume $\Delta$ is a co-rank $1$, continuous tangent subbundle such that,
at $p_0$, $\mathcal{A}^1_0(\Delta)(M)$ is equipped with a continuous exterior differential $\{V,\eta, d\eta\}$. Then, for any adapted coordinate
$\{x^1,\dots$ $,x^n,y\}$ around $p_0$ with an adapted basis $\{X_i\}_{i=1}^{n}$, there exists some domain $U$ on which $X_i$ are uniquely integrable
and have $C^1$ family of solutions.
\end{prop}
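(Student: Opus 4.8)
The plan is to exploit the Stokes property to force uniqueness of integral curves of each $X_i$, and then to promote uniqueness to $C^1$-dependence on initial conditions via a limiting argument. Fix an adapted coordinate system with domain $U$ and the adapted basis $X_i = \partial_i + a_i(x,y)\partial_y$. Since $\eta$ annihilates $\Delta$ and $\eta(\partial_y)$ is bounded away from $0$ on a small enough $U$ (the adapted/transversality condition), after rescaling we may write $\eta = dy - \sum_i b_i\, dx^i$ with $b_i$ continuous and $b_i = a_i$ in the sense that $\eta(X_i)=0$. The key point is that $\eta$ has a \emph{continuous} exterior differential $d\eta$ in the Stokes sense of Definition~\ref{defn-continuousexteriorderivative}.

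The main step is uniqueness. Suppose $\gamma_1,\gamma_2\colon[0,\tau]\to U$ are two integral curves of, say, $X_1$ with $\gamma_1(0)=\gamma_2(0)=q$; both project to the same curve $t\mapsto q + t e_1$ in the $x$-coordinates, so they differ only in the $y$-coordinate, $\gamma_k(t) = (q_x + te_1,\, y_k(t))$ with $y_k(0)=y(q)$. Consider, for fixed small $t$, the $2$-chain $H_t$ swept out by the segment joining $\gamma_1(s)$ to $\gamma_2(s)$ as $s$ runs over $[0,t]$ — a "lens" in the $2$-plane spanned by $\partial_1$ and $\partial_y$ through $q$. Its boundary consists of the two integral curves and the two vertical segments at $s=0$ (degenerate, since $y_1(0)=y_2(0)$) and at $s=t$. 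Because each $\gamma_k$ is tangent to $\ker\eta$, $\int_{\gamma_k}\eta = 0$, so Stokes gives
$$
y_1(t)-y_2(t) = \int_{\text{vertical at }t}\eta = \int_{\partial H_t}\eta = \int_{H_t} d\eta.
$$
Now $|d\eta|$ is bounded on $U$ by some constant $M$, and the area of $H_t$ is at most $\int_0^t |y_1(s)-y_2(s)|\,ds$, whence $|y_1(t)-y_2(t)| \le M\int_0^t|y_1(s)-y_2(s)|\,ds$; Grönwall forces $y_1\equiv y_2$. The same computation shows that two integral curves of $X_i$ that meet must agree on the overlap of their common $x$-projection, which is the relative-openness condition in the definition of unique integrability. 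Thus each $X_i$ is uniquely integrable on $U$, and $e^{tX_i}$ is well defined.

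It remains to show the flow maps have a $C^1$ family of solutions. Approximate $a_i$ uniformly on $\bar U$ by smooth functions $a_i^{(m)}$, giving smooth vector fields $X_i^{(m)}=\partial_i + a_i^{(m)}\partial_y$ whose flows $\phi^{(m)}_t$ are smooth in $(t,q)$ with a uniform existence time $\epsilon_0$ on a slightly smaller domain $U_2$. Standard ODE estimates (Grönwall again, using only a uniform bound on $|a_i^{(m)}|$, not on derivatives) give equicontinuity and uniform convergence of $\phi^{(m)}$; by uniqueness the limit is the flow $\phi$ of $X_i$, so $\phi$ is at least continuous in $(t,q)$. For the $C^1$ statement one differentiates: the $x$-components of the flow are affine in $t$, hence trivially $C^1$; for the $y$-component $Y(t,q)$ one uses the integral identity above applied to the lens between $\phi_t(q)$ and $\phi_t(q')$ to get a Lipschitz, indeed differentiable, dependence on the initial point $q'$, the derivative solving the continuous linearized equation $\partial_t (\partial_q Y) = (\text{evaluation of } d\eta)\cdot \partial_q Y$, whose coefficient is continuous — so $\partial_q Y$ exists and is continuous by the Peano/limit argument applied once more. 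Assembling the $x$- and $y$-parts shows the solution map $[-\epsilon_0,\epsilon_0]\times U_2 \to U$ is $C^1$.

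The main obstacle is the $C^1$-regularity of the flow in the initial condition: uniqueness alone (from Stokes + Grönwall) is comparatively clean, but upgrading to differentiability of $q\mapsto e^{tX_i}(q)$ requires showing that the "variational equation" makes sense despite $a_i$ being merely continuous. The resolution is exactly that $d\eta$ being a genuine continuous $2$-form gives a continuous coefficient for the linearized flow of the $y$-component — this is where the Stokes hypothesis does work that mere continuity of $\Delta$ cannot do — and then a second application of the smooth-approximation/uniform-convergence scheme (now for the variational equations) closes the argument.
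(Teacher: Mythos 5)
Your uniqueness argument is sound and is essentially the paper's: both apply the Stokes property to the $2$-chain lying between two integral curves of $X_k$ in the $\mathcal{X}^k$--$\mathcal{Y}$ plane, bound $\bigl|\int d\eta\bigr|$ by $|d\eta|_{\infty}$ times the area, and bound the area by the vertical separation; you close with Gr\"onwall where the paper uses a maximizing sequence $t_n$ and a sliding strip, and your finish is arguably cleaner. One caveat even here: you normalize $\eta$ to $dy-\sum_i b_i\,dx^i$ by dividing by $a_0=\eta(\partial_y)$, which is only \emph{continuous}; the product rule that keeps a form inside $\Omega^1_d$ requires a $C^1$ multiplier, so the rescaled form is not known to have a continuous exterior differential and you cannot apply Stokes to it. This is easily repaired (work with the unnormalized $\eta$ and the bound $|\int_{v_t}\eta|\geq|\eta(\partial_y)|_{\inf}\,h(t)$, as the paper does), but as written the step is not justified, and the exact identity $y_1(t)-y_2(t)=\int_{H_t}d\eta$ that you lean on later is only available in the normalized form.

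The genuine gap is in the $C^1$ part, which the paper does not prove directly: it invokes a theorem of Hartman stating that if the annihilator of the line field $\partial_t+\sum f^i\partial_{y^i}$ admits a basis of $1$-forms with continuous exterior differentials (here $\eta,dx^1,\dots,dx^{k-1},dx^{k+1},\dots,dx^n$), then the solutions are unique and $C^1$ in $(t,q)$ --- and the author explicitly remarks in a footnote that a direct proof via approximations ``gets very lengthy and technical.'' Your sketch of that direct proof has concrete holes. First, uniform convergence and equicontinuity of the approximating flows $\phi^{(m)}$ do not follow from ``Gr\"onwall using only a uniform bound on $|a_i^{(m)}|$'': Gr\"onwall in the $q$-variable needs the Lipschitz constants of $a_i^{(m)}$, which are not uniformly bounded; what actually gives convergence is uniqueness of the limit flow plus Arzel\`a--Ascoli, and this yields only continuity, not differentiability. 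Second, your variational equation $\partial_t(\partial_q Y)=d\eta(\cdot,\cdot)\,\partial_q Y$ is derived (heuristically) only for the derivative in the initial $y$-coordinate, where the two integral curves share a $2$-plane and the connecting segments are vertical; for derivatives in the initial $x^j$-directions the curves lie in different planes, the connecting $1$-chains are not tangent to $\partial_y$, and the Stokes identity acquires inhomogeneous terms from $\eta(\partial_j)$ at the endpoints which your equation omits. Third, passing from the integral identity for the difference $Y(t,q')-Y(t,q)$ to existence of the limit of the difference quotient requires an a priori bound and a convergence argument that you do not supply. None of these is obviously fatal --- this is essentially the content of Hartman's theorem --- but as it stands the $C^1$ claim is asserted rather than proved, and this is precisely the part of the proposition that cannot be extracted from uniqueness alone.
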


\begin{proof}  Pick any adapted coordinate system and adapted basis with some domain $U \subset V$.
Assume by contradiction that there exists an $X_k$ which is not uniquely integrable. The there exists two integral curves $\gamma_i:(0,\epsilon_i) \rightarrow U$ which intersect
at some point $p=\gamma_i(\tau_i)$ which is not contained in a relatively open set in one of the curves. This means there exists an interval
$[\tau_i,\kappa_i]$ (WLOG assume $\kappa_i > \tau_i$) on which $\gamma_i$ do not coincide but are defined and such that $\gamma_1(\tau_1)=\gamma_2(\tau_2)$. By shifting
and restricting to a smaller interval we can then assume we have $\gamma_i: [0, \epsilon_0] \rightarrow U$ such that $\gamma_1(0) = \gamma_2(0)$
but that they are not everywhere equal. Now take any $q = \gamma_1(\epsilon_1)$ that is not in $\gamma_2$. Therefore they also do not coincide on some interval
around $\epsilon_1$. Denote $\epsilon_2= \sup_{0 \leq t \leq \epsilon_1}\{t \quad \text{such that} \quad \gamma_1(t)=\gamma_2(t)\}$. $\epsilon_2$ exists since we know at least that $\gamma_1(0)=\gamma_2(0)$.
We have that clearly $\epsilon_2 < \epsilon_1$ and between $\epsilon_2$ and $\epsilon_1$ $\gamma_1,\gamma_2$ can not intersect. So by restricting everything to $[\epsilon_2,\epsilon_1]$ and reparametrizing
we obtain two integral curves of $X_k$ defined on some $[0,\epsilon]$ such that $\gamma_1(0)=\gamma_2(0)$
and that $\gamma_1(t) \neq \gamma_2(t)$ for all $0<t\leq \epsilon$ for some $\epsilon$. Due to the specific
form of $X_k$ both curves lie in the $\mathcal{X}^k_p-\mathcal{Y}_p$ plane (whose coordinate we will denote
as $(x,y)$) and have the form:
$$
\gamma_j(t) = (t, d_j(t)).
$$
Without loss of generality we can assume $d_1(t)> d_2(t)$ for all $0<t\leq \epsilon$. We are going to show
that existence of exterior differential forces $d_1(t)=d_2(t)$ for all $t \leq \epsilon$ leading to a
contradiction. To this end let $h(t) = d_1(t)-d_2(t)$.

Before continuing with the proof we make one elementary remark. By our choice of coordinates $\eta$ will have the form
$$
\eta = a_0(x,y)dy + \sum_{i=1}^n a_i(x,y)dx^i,
$$
with $\inf_{q \in U}|a_0(q)|= |\eta(\partial_y)|_{\inf}>0$ (since $\Delta$ is always transverse $y-$direction, it can not contain $\partial_y$ and therefore $b$ can not be $0$) and so in particular $a_0(q)$ has
constant sign. So if $\alpha(t)$ is any (non-singularly parametrized) curve whose tangent vectors lie in $\mathcal{Y}_{\gamma(t)}$ axis, one has that
$\eta(\dot{\alpha})$ always has constant sign and therefore
$$
\bigg|\int_{\gamma}\alpha \bigg| = \int_{\gamma}\bigg|\alpha\bigg| \geq |\eta(\partial_y)|_{\inf}| \ |\alpha|,
$$
where $|\alpha|$ is the Euclidean length of the curve $\alpha$.

Let $v_t$ be the straight line segment that lies in the $\mathcal{Y}_{d_2(t)}$ axis and which starts at
$d_2(t)$ and ends at $d_1(t)$. We let $\gamma_t$ be the loop that is formed by composing $\gamma_2, v_t$
and then $\gamma_1$ backwards. We also let $\Gamma_t$ be the surface in
$\mathcal{X}^k_p-\mathcal{Y}_p$ plane that is bounded by this curve. Note that since  $\eta(\dot{\gamma}_i)=0$,
$\int_{\gamma_t}\eta = \int_{v_t}\eta$. But $\dot{v}_t=\partial_y$ and $\partial_y$ is always transverse to $\Delta$ on $U$.
Therefore $\eta(\dot{v}_t)$ is never $0$ and so it never changes sign. So we have that
\begin{equation}\label{eq-segmentlength}
\bigg|\int_{\gamma_t}\eta\bigg| = \bigg|\int_{v_t}\eta\bigg| > |\eta(\partial_y)|_{\inf} h(t).
\end{equation}
Since $\eta$ has the continuous exterior differential $d\eta$, we have using Stokes property and equation \eqref{eq-segmentlength}
$$
 \bigg|\int_{\Gamma_t}d\eta\bigg|=\bigg|\int_{\gamma_t}\eta\bigg|  \geq |\eta(\partial_y)|_{\inf} h(t).
$$
But $|\int_{\Gamma_t}d\eta| \leq |\Gamma_t||d\eta|_{\infty}$ (where $|\Gamma_t|$ denote the Euclidean area). Therefore we have
\begin{equation}\label{eq-inequni}
h(t) \leq \frac{|d\eta|_{\infty}}{|\eta(\partial_y)|}|\Gamma_t|.
\end{equation}
We are going to show that this leads to a contradiction for $t$ small
enough. Let $t_n$ be a sequence of times such that $t_n \leq \epsilon$, $t_n \rightarrow 0$ as
$n\rightarrow \infty$ and
\begin{equation}\label{eq-assumption}
h(t_n) \geq \sup_{t<t_n}h(t),
\end{equation}
which is possible since $h(t)$ is continuous and $0$ at $0$. Let $S_n$ be the strip obtained by parallel sliding the
segment $v_{t_n}$ along the curve $\gamma_1$. By our assumption in equation \eqref{eq-assumption}, $S_n$
contains the surface $\Gamma_{t_n}$ (see figure \ref{fig-strip}).

\begin{figure}
  \centering
  \includegraphics[width=150px]{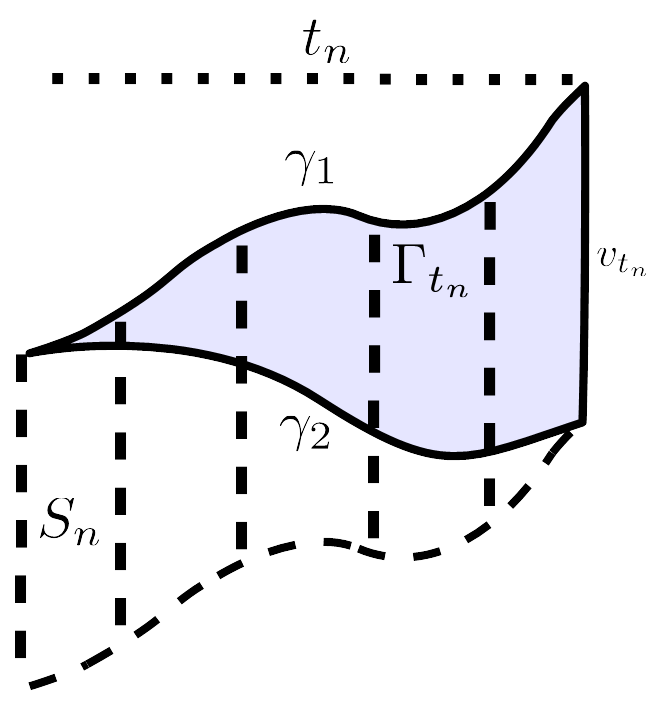}
  \caption{Strip $S_n$.}\label{fig-strip}
\end{figure}

\begin{lem}\label{lem-striparea}$|S_n|= t_n h(t_n)$.\end{lem}
\begin{proof} Consider the transformation $(x,y)\rightarrow (x, y-\gamma_1(x))$ on its maximally defined
domain (which includes $S_n$ and which is differentiable since $\gamma_1(t)$ is differentiable in the $t$ variable). It takes the strip $S_n$ to a rectangle with two sides of length $t_n$ and
$h(t_n)$ so in particular it has area $t_n h(t_n)$. The Jacobian of this transformation is $1$ and
therefore it preserves area so the strip itself has area $t_n h(t_n)$.
\end{proof}
Since this strip contains $\Gamma_n$ we see that $|\Gamma_n| \leq t_n h(t_n)$. Then, using equation
\eqref{eq-inequni} we obtain for all $t_n$
$$
h(t_n) \leq \frac{|d\eta|_{\infty}}{|\eta(\partial_y)|_{\inf}} t_n h(t_n) \Rightarrow
\frac{|\eta(\partial_y)|_{\inf}}{|d\eta|_{\infty}} \leq t_n,
$$
which leads to a contradiction since $t_n$ tends to $0$. This concludes the proof of uniqueness.

For being $C^1$, note that we can always find a neighborhood $\tilde{U} \varsubsetneq U$ and some $\epsilon_0$ such that
for all $q \in \tilde{U}$ and for all $|t| \leq \epsilon_0$, $e^{tX_k}(q) \in U$ and is well defined (the size of $\tilde{U}$ and $\epsilon_0$ depend on each other and on $|X_k|_{\infty}$). Therefore we obtain the map
$[-\epsilon_0, \epsilon_0] \times \tilde{U} \rightarrow U$.
Then, for being $C^1$ we use a result from \cite{Har64} (the theorem stated there is more general so we state the specialized version)\footnote{One can prove that the solutions are $C^1$ using Stokes Theorem
on a sequence of approximations $\eta^k$ built in a certain way but it gets very lengthy and technical.}:
\begin{thm} Let $f(t,y): U\subset \mathbb{R}^{n+1} \rightarrow \mathbb{R}^n$ (with $y \in \mathbb{R}^n$) be continuous. Then, the ODE $\dot{y}=f(t,y)$ has unique and $C^1$ solutions
$y= \eta(t,t_0,y_0)$ for all $(t_0,y_0) \in U$ if for any $p \in U$, there exists a neighborhood $U_p$ and a non-singular matrix $A(t,y)$ such that the $1-$forms
$\eta^i = \sum_{i=1}^n A^{ij} (dy^i - f^idt)$ have continuous exterior differentials.
\end{thm}

Note first that the unique integrability of the non-autonomous ODE with $C^1$ solutions above is equivalent to unique integrability of the vector-field $X=\frac{\partial}{\partial t} + \sum_{i=1}^n f^i(t,y,z)\frac{\partial}{\partial y^i}$ with $C^1$ family of integral curves which would be given by $t \rightarrow (t, \eta(t,t_0,y_0)) \subset U$ for $t$ small enough. Second let $\mathbb{X}$ be the bundle spanned by this vector-field in the $(t,y)$ space. Then, this bundle is the intersection of the kernel of the 1-forms $\eta^i=dy^i - f^i dt$. Therefore
$\eta^i$ is a basis of sections for $\mathcal{A}_0^1(\mathbb{X})$. In particular then the condition of this theorem about the existence of such a non-singular $A(t,y)$ simply means that there must exist a basis  of sections for $\mathcal{A}_0^1(\mathbb{X})$ which has continuous exterior differentials. In our case for each $X_k$ we have explicitly built that basis of sections which is given by  $\eta, dx^1,\dots,dx^{k-1},dx^{k+1},\dots,dx^n$.

\end{proof}
\begin{rem}It is possible to prove stronger versions of this theorem but they use approximations to
$\Delta$ rather than $\Delta$ itself and $C^1$ regularity is interchanged with Lipschitzness. We refer the
interested readers to \cite{Har64, LuzTurWar16} for the generalizations.
\end{rem}

\subsection{Proposition \ref{prop-stokessubriemann}}\label{ssec-geometric}

For the following, given some $p \in U$, let $\bar{X}_p$  be the space spanned by $X_i(p)$ at the point $p$ , $|X|_{\infty}=\max_{i=1,\dots, n}|X_i|_{\infty}$ , $|\wedge X|_{\inf} =   |X_1 \wedge \dots \wedge X_n \wedge \partial_y|_{\inf}$ and $\Pi_x$ be the projection to the $x$ coordinates.

\begin{prop}\label{prop-stokessubriemann} Let $\gamma_i: [0,\varepsilon_i] \rightarrow U$ for $i=1,2$ be two $\Delta-$admissible curves with lengths $\ell_{i}$ such that
$\gamma_1(0)=\gamma_2(0)=q$, $\Pi_x\gamma_1(\varepsilon_1) = \Pi_x\gamma_2(\varepsilon_2)$ (that is they start on the same point and end at the same $\partial_y$ axis). Let $\ell = \max\{\ell_1,\ell_2\}$, $\varepsilon=\max\{\varepsilon_1,\varepsilon_2\}$, $\xi = \max_{k=1,2} n \frac{(n|X|_{\infty})^{n}}{|\wedge X|_{\inf}}\sup_{t \leq \varepsilon_k} |\dot{\gamma}_k(t)|\tilde{C}\omega_{\ell} $ and $\beta$ be the segment in the $\partial_y$ direction that connects $\gamma_1(\varepsilon_1)$ to $\gamma_2(\varepsilon_2)$. Assume moreover that $B(q, 2\ell) \subset U$.
Then, for any 2-chain $P \subset \bar{X}_{q} \cap U$ whose boundary is the projection of $\gamma^{-1}_1 \circ \gamma_2$ along $\partial_y$ to $\bar{X}_{q}$ we have that
\begin{equation}\label{eq-mainprop}
\frac{1}{|\eta(\partial_y)|_{\infty}}\bigg(\bigg|\int_P d\eta \bigg| - |c|\bigg) \leq |\gamma_1(\varepsilon_1) - \gamma_2(\varepsilon_2)| \leq \frac{1}{|\eta(\partial_y)|_{\inf}}\bigg(\bigg|\int_P d\eta\bigg| + |c|\bigg),
\end{equation}
and
\begin{equation}\label{eq-mainprop2}
sign\bigg(\int_{\beta}\eta\bigg) = sign\bigg(\int_P d\eta + c\bigg),
\end{equation}
where
\begin{equation}
|c| \leq 4 \ell \varepsilon \xi |d\eta|_{\infty}.
\end{equation}
\end{prop}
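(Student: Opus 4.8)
The plan is to set up the closed loop $\gamma_1^{-1}\circ\gamma_2\circ\beta$ (first $\gamma_2$, then the vertical segment $\beta$ back, then $\gamma_1$ reversed) in $U$, apply the Stokes property of $\eta$, and then carefully relate the resulting surface integral of $d\eta$ to the prescribed $2$-chain $P$ living in the affine plane $\bar X_q$. Since $\gamma_1,\gamma_2$ are $\Delta$-admissible we have $\eta(\dot\gamma_i)=0$, so the only contribution to $\oint\eta$ over the loop comes from $\beta$; because $\beta$ is a segment in the $\partial_y$ direction and $\eta(\partial_y)$ has constant sign on $U$ with $|\eta(\partial_y)|_{\inf}>0$, I get $\bigl|\int_\beta\eta\bigr|$ squeezed between $|\eta(\partial_y)|_{\inf}\,|\gamma_1(\varepsilon_1)-\gamma_2(\varepsilon_2)|$ and $|\eta(\partial_y)|_{\infty}\,|\gamma_1(\varepsilon_1)-\gamma_2(\varepsilon_2)|$, and moreover $\operatorname{sign}\int_\beta\eta$ is exactly the sign of the signed vertical displacement. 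This is the same mechanism as in the proof of Proposition~\ref{prop-uniquebasis}, just with two admissible curves in place of two integral curves of a single $X_k$.

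Next I would fill the loop with a $2$-chain $H$ bounded by it and write $\int_\beta\eta=\oint_{\text{loop}}\eta=\int_H d\eta$. The point is that $H$ is not the chain $P$ in the statement: $P$ is obtained by projecting the loop along $\partial_y$ onto $\bar X_q$. So I would build $H$ as the union of $P$ (viewed in $\bar X_q$) together with the ``cylinder'' swept out by projecting each point of the loop along $\partial_y$ onto $\bar X_q$; i.e. $H = P \cup C$ where $C$ is a thin lateral piece connecting the actual curves $\gamma_1,\gamma_2$ to their projections. Then $\int_H d\eta = \int_P d\eta + \int_C d\eta$, and I define $c := \int_C d\eta$. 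The substance of the proof is the estimate $|c|\le 4\ell\varepsilon\xi|d\eta|_\infty$: the lateral chain $C$ is controlled in one direction by the $\partial_y$-extent of the curves away from the plane $\bar X_q$, and in the other by the $x$-extent, which is at most $\varepsilon$ (or a multiple of $\ell$). The vertical gap between $\gamma_k(t)$ and its projection is governed by how much $\Delta_{\gamma_k(t)}$ tilts away from $\bar X_q=\Delta_q$, and since the $a_i$ have modulus of continuity $\tilde C\omega$ this tilt is $O(\tilde C\omega_\ell)$ over a curve of length $\ell$; writing a $\Delta$-vector in the basis $X_i$ and using Cramer-type bounds with $|\wedge X|_{\inf}$ and $|X|_\infty$ produces precisely the constant $\xi$. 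Integrating the area element of $C$ over the curve then yields the factor $\ell\varepsilon\xi$, with the $|d\eta|_\infty$ coming from bounding $d\eta$ on $C$, and the harmless factor $4$ absorbs the two curves and the crude area estimate (here the hypothesis $B(q,2\ell)\subset U$ is what guarantees $C\subset U$ so that $d\eta$ and $\eta$ are defined there).

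Combining the three ingredients — the two-sided bound on $\bigl|\int_\beta\eta\bigr|$ in terms of $|\gamma_1(\varepsilon_1)-\gamma_2(\varepsilon_2)|$, the identity $\int_\beta\eta=\int_P d\eta + c$, and the bound on $|c|$ — gives both the sandwich inequality \eqref{eq-mainprop} (move $|c|$ to the appropriate side and divide by $|\eta(\partial_y)|_{\inf}$ or $|\eta(\partial_y)|_{\infty}$) and the sign statement \eqref{eq-mainprop2}. I expect the main obstacle to be the careful bookkeeping in constructing the lateral chain $C$ and verifying $|c|\le 4\ell\varepsilon\xi|d\eta|_\infty$ with the stated constant: one must parametrize $C$ so that its area is genuinely bounded by (horizontal extent)$\times$(vertical tilt) and check that the vertical tilt of the admissible curves relative to $\bar X_q$ really is bounded by $\tilde C\omega_\ell$ times the geometric factor $n(n|X|_\infty)^n/|\wedge X|_{\inf}$ hidden in $\xi$, i.e. that expressing $\dot\gamma_k$ in the adapted basis and comparing $a_i(\gamma_k(t))$ with $a_i(q)$ produces no worse than these constants. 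The rest is the by-now-standard Stokes-property manipulation already used in Subsection~\ref{ssec-uniqueness}.
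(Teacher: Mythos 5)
Your proposal follows essentially the same route as the paper: the paper also projects $\gamma_1,\gamma_2$ along $\partial_y$ onto $\bar X_q$ (via the ``frozen'' non-autonomous fields $\sum_k u_i^k(t)X_k(q)$), fills the gap with the vertical interpolation chains $C_1,C_2$ (your lateral cylinder $C$), sets $c=\int_{C_1\cup C_2}d\eta$, and bounds $|c|$ exactly as you describe --- the vertical deviation $\sup_s|\gamma_i(s)-\alpha_i(s)|\le\varepsilon\xi$ coming from the modulus of continuity of the $X_k$ together with the singular-value (Cramer-type) bound on the coefficients $u_i^k$, and the remaining factor $2\ell$ per curve from the lengths, giving $4\ell\varepsilon\xi|d\eta|_\infty$. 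The treatment of $\int_\beta\eta$ via the constant sign of $\eta(\partial_y)$ and the final assembly also match the paper's argument.
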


\begin{proof} Denote $\gamma_1(\varepsilon_1)=q_1$, $\gamma_2(\varepsilon_2)=q_2$. Assume wlog that $q_1 \geq q_2$ with respect to the order given by the positive orientation of $\partial_y$ direction.
We first define the projections if $\gamma_i$ to $\bar{X}_q$. Since $\gamma_i$ are admissible curves we have that
$$
\dot{\gamma}_i(t)= \sum_{k=1}^n u_i^k(t)X_k(\gamma_i(t)) = \sum_{k=1}^n u_i^k(t)(\partial_k + a_k(\gamma_i(t))\partial_y) ,
$$
$t$ a.e for some piecewise $C^1$ functions $u_i^k(t)$. Define the following non-autonomous vector-fields
$$
Z_i(t,p) = \sum_{k=1}^n u_i^k(t)X_k(q)= \sum_{k=1}^n u_i^k(t)(\partial_k + a_k(q)\partial_y),
$$
which are constant in the $p$ variable. Therefore they admit unique solutions starting at $t=0$ and $q$ which we denote as $\alpha_i(t,q,0) = e^{tZ_i}(q)$ that are inside $\bar{X}(q)$. We will denote the images of these curves as $\alpha_{i}$. Its clear that $\Pi_x(\gamma_i(t))=\Pi_x(\alpha_i(t))$. Since $\bar{X}_q$ is transversal to $\partial_y$ direction, these are the unique projections of $\gamma_i$ to $\bar{X}_q$ alogn $\partial_y$ direction. The condition that $B(q,2\ell) \subset U$ also implies that they are inside $U$. We can build some 2 chains inside $U$ bounded by these 1 chains as follows \footnote{We remind that a $n$ cell in $U$ is a differentiable mapping from a convex n-polyhedron in $\mathbb{R}^n$ (with an orientation) to $U$ and a $n$ chain is a formal sum of $n$ cells over integers.}
$$
v_{\ell}(t,s) = \alpha_{\ell}(s) + t(\gamma_{\ell}(s)-\alpha_{\ell}(s)),
$$
from $[0,1]\times [0,\epsilon]$ to $U$. Since $\alpha_{\ell}(s)$ and $\gamma_{\ell}(s)$ are piecewise $C^1$ in the $s$ variable, the domain of this map can be partitioned into smaller rectangles on which $v_{\ell}(t,s)$ are differentiable and therefore whose images are $2$ cells. Then, the images of $v_{\ell}(t,s)$ become 2 chains which we denote as $C_{\ell}$.  Note that $v_{1}(t,0) = v_{2}(t,0)=q$ for all $t$. And also let the image of $v_{2}(t,\varepsilon_2)$ be a curve $\tau$. Since $q_1 \geq q_2$ then image of  $v_{1}(t,\varepsilon_2)$ is $\beta \circ \tau$.
It is also clear that $v_{\ell}(0, s) = \alpha_{\ell}(s)$ and $v_{\ell}(1,s)=\gamma_{\ell}(s)$.
Then, we orient these curves and $C_i$ such that:
$$
\partial C_1 =   \gamma_1 -\beta  - \tau   -\alpha_1,
$$
$$
\partial C_2 = \tau -\gamma_2 + \alpha_2.
$$

Let $\Gamma$ be any 2 chain in $U$ bounded by concatenating $\gamma_1, \gamma_2$ and $\beta$ (whose composition is a 1 cycle in a contractible space so it always bounds a chain) in the right orientation so that
$$
\partial \Gamma =  \beta  -\gamma_1 + \gamma_2.
$$
Finally also orient $P$ so that
$$
\partial P = \alpha_1  - \alpha_2.
$$
Then, $\Gamma, C_1,C_2$ and $P$ form a closed 2 chain $CC$ (see figure \ref{fig-cycle}). Using Stokes property and the fact that $\partial CC = \emptyset$ we get

\begin{figure}
  \centering
  \includegraphics[width=150px]{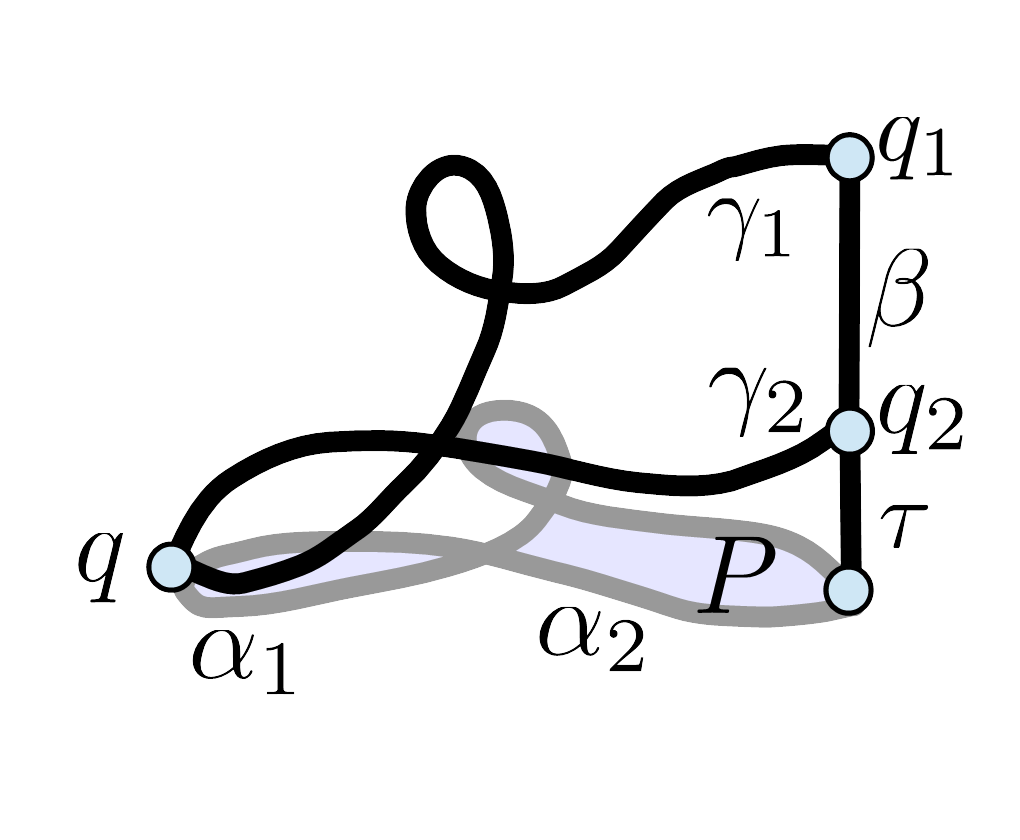}
  \caption{The Closed 2-Chain C.}\label{fig-cycle}
\end{figure}

$$
\int_{\Gamma}d\eta = -\bigg(\int_{P}d\eta + \int_{C_1 \cup C_2}d\eta \bigg).
$$
Moreover since $\Gamma$ is bounded by $\gamma_1,\gamma_2$ and $\beta$ and $\eta_{\gamma_i(t)}(\dot{\gamma}_i(t))=0$ we get again by Stokes property
\begin{equation}\label{eq-finaleq}
 \int_{\beta}\eta=\int_{\Gamma}d\eta  = -\bigg(\int_{P}d\eta + \int_{C_1 \cup C_2}d\eta \bigg).
\end{equation}
Defining $c= \int_{C_1 \cup C_2}d\eta$,  we require one more final lemma to finish the proof,

\begin{lem} For $\beta,c$ as defined above
\begin{equation}\label{eq-Cintegral}
|c|  \leq   4\xi\ell \varepsilon|d\eta|_{\infty},
\end{equation}

\begin{equation}\label{eq-Bintegral}
|q_2-q_1||\eta(\partial_y)|_{\inf}  \leq \bigg|\int_{\beta} \eta \bigg| \leq |\eta(\partial_y)|_{\infty}|q_2-q_1|.
\end{equation}

\end{lem}

\begin{proof}
For the first inequality with $\dot{\alpha}(s)$ and $\dot{\gamma}(s)$ a.e defined we can write,
\[
\begin{aligned}
\bigg|\int_{C_i}d\eta \bigg| &= \bigg|\int_0^1 dt \int_0^{\epsilon}ds \ d\eta_{v_i(t,s)} (\gamma_i(s)-\alpha_i(s), (1-t)\dot{\alpha}_i(s) + t\dot{\gamma}_{i}(s)) \bigg|, \\
& \leq  \int_0^1 dt \int_0^{\epsilon}ds \ |d\eta|_{\infty} \sup_{s \leq \epsilon}|\alpha_i(s)-\gamma_i(s)|((1-t)|\dot{\alpha}_i(s) | + t|\dot{\gamma}_i(s)|), \\
& \leq  2\ell|d\eta|_{\infty} \sup_{s \leq \epsilon}|\alpha_i(s)-\gamma_i(s)|.\\
\end{aligned}
\]
Therefore we need to obtain an estimate on  the maximum distance between $\gamma_i$ and $\alpha_i$. For this, we have that
$$
|\alpha_i(t) - \gamma_i(t)| \leq n \varepsilon_i \sup_{t \leq \varepsilon_i, \ell=1,\dots,n}|u_i^{\ell}(t)||X_{\ell}(\gamma_1(t))-X_{\ell}(q)|.
$$
Since $|X_{\ell}(\gamma_i(t))-X_{\ell}(q)| \leq \tilde{C}\omega_{\ell}$ we have that
$$
|\alpha_i(t) - \gamma_i(t)| \leq n \varepsilon_i \sup_{t \leq \varepsilon_i, \ell=1,\dots,n}|u_i^{\ell}(t)|\tilde{C}\omega_{\ell}.
$$
Now we need to estimate $\max_{\ell=1,\dots,n}|u_i^{\ell}(t)|$. Let $L_p$ be the linear transformation that takes $X_i(p)$ to $\partial_i$ and fixes $\partial_y$ (which is a matrix whose columns in the Euclidean basis are $X_i(p)$ and $\partial_y$). Then, $det(L^i_p) \geq  |X_1 \wedge \dots \wedge X_n \wedge \partial_y|_{\inf}$ for all $p\in U$ which is non-zero by the form of $X_i$. Let $s_1(p)=m(L_p),s_2(p),\dots,s_{n+1}(p)=|L_p|$ be the singular values of $L_p$. Then, $det(L_p) = s_1(p)\dots s_n(p) \leq m(L_p)|L_p|^n$. Since $L_p$ is a matrix whose each column has norm less than $|X|_{\infty}>1=|\partial_y|$ one has that $|L_p| \leq n|X|_{\infty}$ for all $p\in U$ so,
$m(L_p) \geq \frac{|\wedge X|_{\inf}}{(n|X|_{\infty})^{n}}$. Then,
$$
|\dot{\gamma}_i(t)| = |\sum_{k=1}^n u^k_i(t)L_{\gamma_k(t)}\partial_k| \geq m(L_{\gamma_i(t)})\max_{k=1,\dots,n}|u^k_i(t)|.
$$
So
$$
\sup_{t < \varepsilon_i, k=1,\dots,n }|u^k_i(t)| \leq\frac{(n|X|_{\infty})^{n}}{|\wedge X|_{\inf}}\sup_{t < \varepsilon_i}|\dot{\gamma}_i(t)|.
$$
This gives
$$
|\alpha_i(t) - \gamma_i(t)| \leq \varepsilon n \frac{(n|X|_{\infty})^{n}}{|\wedge X|_{\inf}}\sup_{t < \varepsilon_i}|\dot{\gamma}_i(t)|\tilde{C}\omega_{\ell} \leq \varepsilon \xi.
$$
With this estimate in hand, we have $|\int_{C_i}d\eta| \leq  2\ell \varepsilon \xi$, and so  $|c| =|\int_{C_1 \cup C_2}d\eta| \leq  4\ell \varepsilon \xi$.

For the second note that $\beta$ is a curve whose tangent vector is always parallel to $\partial_y$. But since $\eta$ annihiliates $\Delta$ which is transverse to $\partial_y$, we have that $\eta_p(\partial_y)$ is never $0$ in $U$ and never changes sign. Then, similarly for any non-singular parametrization $\eta_{\beta(t)}(\dot{\beta}(t))$ also never changes sign. So assuming unit length parametrization
$$
\bigg|\int_{\eta}\beta\bigg| = \int_{0}^{|q_2-q_1|}|\eta_{\beta(t)}(\dot{\beta}(t))|dt,
$$
which gives
$$
 |\eta(\partial_y)|_{\inf}|q_2-q_1| \leq \bigg|\int_{\eta}\beta\bigg| \leq |\eta(\partial_y)|_{\infty}|q_2-q_1|.
$$

\end{proof}

Now using equations \eqref{eq-finaleq}, \eqref{eq-Cintegral} and \eqref{eq-Bintegral} we get that
$$
\frac{1}{|\eta(\partial_y)|_{\infty}}\bigg(\int_P d\eta - c\bigg) \leq |q_2 - q_1| \leq \frac{1}{|\eta(\partial_y)|_{\inf}}\bigg(\int_P d\eta + c\bigg),
$$
where
$$
|c|=\bigg|\int_{C_1 \cup C_2}d\eta\bigg|\leq 4\xi\ell\varepsilon|d\eta|_{\infty}.
$$
The claim about the sign is also an immediate consequence of equations  \eqref{eq-finaleq} and \eqref{eq-Cintegral}.

\end{proof}

\subsection{Fixing $U$ and $\epsilon_0$}\label{sssection-fixU}

Now, before carrying out the rest of the proof of Theorem  \ref{thm-ballbox}, we will fix $\epsilon$ and $U$ once and for all. Assume we are given any adapted coordinate system with $p_0 \rightarrow 0$ and an adapted basis. Choose the domain $U$ containing $0$ so that
\begin{itemize}

\item There exist some $i,j \in \{1,\dots,n\}$ such that for all $q \in U$
\begin{equation}\label{eq-noninvolutive}
d\eta_q(X_i,X_j) \neq 0.
\end{equation}
This is possible due to assumption of non-integrability of at $0$ and of the continuity of $\eta$ and $d\eta$. This also implies $m(d\eta|_{\Delta})_{\inf}>0$.

\item For all $\ell,k=1,\dots,n$,
\begin{equation}\label{eq-normX}
\frac{1}{2} \leq |X_{\ell}|_{\inf} \leq |X_{\ell}|_{\infty} \leq 2,
\end{equation}
\begin{equation}\label{eq-normX2}
\frac{1}{2} \leq |X_{\ell} \wedge X_k|_{\inf} \leq |X_{\ell} \wedge X_k|_{\infty} \leq 2,
\end{equation}
and
\begin{equation}\label{eq-normX3}
\frac{1}{1.75} \leq |X_1\wedge \dots X_n\wedge \partial_y|_{\inf} \leq|X_1\wedge \dots X_n\wedge \partial_y|_{\infty} \leq 2.
\end{equation}
These are possible since at the origin $X_{\ell}(0) = \partial_{\ell}$ and so the norms above are equal to $1$ at the point $0$.

\item
The vector-fields $\{X_{\ell}\}_{\ell=1}^{n}$ are uniquely integrable on $U$ and have $C^1$ family of solutions $e^{tX_{\ell}}(q)$ defined on $[-\epsilon_0, \epsilon_0] \times \tilde{U}$
for some $\epsilon_0$ and $\tilde{U} \varsubsetneq U$ containing $0$.

\end{itemize}

Finally fix $\epsilon_0$ so that,

\begin{itemize}

\item For all $q \in \tilde{U}$
\begin{equation}\label{eq-remaininside1}
B((13 + (c+1)(2+6n)d_g)\epsilon_0,q) \subset U,
\end{equation}
where $B(r,q)$ is defined with respect to the Euclidean norm and $c>0$ is the constant appearing in Lemma \ref{lem-Gromov} and the remark \ref{rem-Gromov} that follows. Although this inequality will be used in various places, one immediate consequence that we state now is that
for all $q \in \tilde{U}$, $|t_{i_k}| \leq \epsilon_0$, where $i_k \in \{1, \dots ,n\}$ and
$k=1,\dots ,n+4$, one has that
\begin{equation}\label{eq-remaininside2}
e^{t_{i_k}X_{i_k}}\circ \dots \circ e^{t_{i_1}X_{i_1}}(q) \in U.
\end{equation}
That is starting at $q$, even we apply the flows consecutively $n+4$ times up to time $\epsilon_0$ we still
stay in the domain $U$ (for this $B(q, 2(n+4))$ is sufficient). This also guarantees us that on $[-\epsilon_0,\epsilon_0] \times \tilde{U}$ this composition of flows is a $C^1$ map
with respect to $q$ and $t_{i_k}$.

\item Also the following are satisfied:

\begin{equation}\label{eq-estimate1}
|d\eta|_{\infty}\tilde{C}\omega_{13(c+1)\epsilon_0}(4 + \tilde{C}\omega_{13(c+1)\epsilon_0})(2n)^{n+7}d^2_g   < \frac{1}{4}|d\eta(X_i,X_j)|_{\inf},
\end{equation}

\begin{equation}\label{eq-estimate3}
3\epsilon_0 \leq \delta.
\end{equation}
where $\delta>0$, $c>0$  are the constants appearing in Lemma \ref{lem-Gromov} (and the remark \ref{rem-Gromov} that follows).
These  are possible since $\omega_0=0$ and $\omega_t$ is continuous, and for all $q\in U$ we have that $d\eta_q(X_i,X_j) > 0$.
\end{itemize}

\begin{rem} \label{rem-start}Some of these conditions were chosen for convenience and if one digs carefully into the proof, it can be seen that the numerical constants appearing in the expressions are not sharp. However sharp constants are not relevant for our purposes so in order not to introduce additional complexity to the exposition, we will not attempt to sharpen them. Also we  note that several similar looking conditions were combined into a single condition in equation \eqref{eq-estimate1} using the ''worst'' one, in order to decrease the number of conditions.
\end{rem}

\subsection{Proof of Theorem \ref{thm-ballbox}}\label{subsec-ballboxproof}

The part of the Theorem  \ref{thm-ballbox} about smooth adapted coordinates is divided into two seperate parts. First given $\epsilon$, we will construct a certain $n-$dimensional manifold $\mathcal{W}_{\epsilon}$ using the adapted basis $X_i$, which is admissible and transverse to $\partial_y$ direction. This is carried out in subsubsection \ref{ssec-partI}.
Then, next we will describe how the sub-Riemannian ball spreads around these manifolds in subsubsection \ref{ssection-spread}. The results we obtain in these sections will then quickly lead us to the proof of the theorem. The construction of $\mathcal{W}_{\epsilon}$ will be based on the Proposition \ref{prop-uniquebasis}
while the description of the spread of the sub-Riemannian ball will use Proposition \ref{prop-stokessubriemann}. The proofs will make it clear that the regularity of the bundle plays an important role in the shape of the manifolds $\mathcal{W}_{\epsilon}$ but the spread of the sub-Riemannian ball depends mainly only on geometric properties of $\Delta$, in particular the non-involutivity amount.

By assumptions of the theorem, we have a $p_0 \in M$ where the condition of non-integrability is satisfied and that $\mathcal{A}_0^1(\Delta)(M)$ is equipped with a continuous exterior differential $\{V,\eta,d\eta\}$. We assume we are given an adapted coordinate system with a domain $U$ and an adapted basis which satisfies the properties listed in section \ref{sssection-fixU}. We let $\Pi_x$ denote the projection to the $x$ coordinates and $\Pi_y$ to $y$ coordinates.

\subsubsection{Part I: Construction of $\mathcal{W}_{\epsilon}$ and Its Properties}\label{ssec-partI}

Given any $|\epsilon| \leq \epsilon_0$ we can define the function $T_{\epsilon}: (-\epsilon,\epsilon)^n \rightarrow V$ by
$$
T_{\epsilon}: (t_1,\dots,t_n) \rightarrow e^{t_nX_n}\circ \dots \circ e^{t_1X_1}(0).
$$
This function is $C^1$ by condition \eqref{eq-remaininside2}. Notice also that it is 1-1 since $X_i$ are uniquely integrable and since due to their form, an integral curve of $X_i$ can intersect an integral curve of $X_j$ for $i\neq j$ only once. Therefore the image of $T$ which we denote as $\mathcal{W}_{\epsilon}$ is a $C^1$ surface. Moreover, every point on it is obviously accessible. Finally it can be given as a graph over $(x_1,\dots,x_n)$, in fact due to the form of the vector-fields $T_{\epsilon}(x_1,\dots,x_n) = (x_1,\dots,x_n, a(x_1,\dots,x_n))$ for some $C^1$ function $a$. In particular note that if
$(x,y)= T_{\epsilon}(t_1,\dots,t_n)$ then $|x| = |t|$. By the condition
\eqref{eq-remaininside2}, $\mathcal{W}_{\epsilon} \subset U$. Two important properties that we will use often are given in the next lemma:
\begin{lem}\label{lem-propertiesofW} Let $\mathcal{W}_{\epsilon}$ be as defined above with $\epsilon \leq \epsilon_0$. Then,

\begin{itemize}

\item For any $p=(x,y) \in U$ with $|x^i| \leq \epsilon$, there exists a unique $q \in \mathcal{W}_{\epsilon}$ with the same $x$ coordinates as $p$.

\item For any $q=(x,y) \in \mathcal{W}_{\epsilon}$ one has that $|y| \leq |x|\tilde{C}\omega_{2|x|}$.

\end{itemize}

\end{lem}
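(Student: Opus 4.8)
The plan is to exploit the explicit graph form $T_\epsilon(x_1,\dots,x_n)=(x_1,\dots,x_n,a(x_1,\dots,x_n))$ established just before the lemma. For the first bullet, the statement is essentially immediate: since $\mathcal W_\epsilon$ is the graph of a single $C^1$ function $a$ over the open cube $(-\epsilon,\epsilon)^n$ in the $x$-variables, given any $p=(x,y)\in U$ with $|x^i|\le\epsilon$ (so $x$ lies in the closure of that cube — one either works on the half-open cube or simply takes $\epsilon$ slightly larger, a harmless adjustment already built into the choice of $\epsilon_0$), the point $q=(x,a(x))$ is the one and only point of $\mathcal W_\epsilon$ with the prescribed $x$-coordinates. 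Uniqueness is exactly the injectivity of $T_\epsilon$ noted above (an integral curve of $X_i$ meets an integral curve of $X_j$, $i\ne j$, at most once, because of the triangular form $X_i=\partial_i+a_i\partial_y$).

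For the second bullet I would track the $y$-coordinate along the iterated flow. Write $q=(x,y)=e^{t_nX_n}\circ\cdots\circ e^{t_1X_1}(0)$ with $|t|=|x|\le\epsilon$, and let $q_0=0$, $q_k=e^{t_kX_k}(q_{k-1})$ so $q_n=q$. Along the $k$-th leg the curve $s\mapsto e^{sX_k}(q_{k-1})$, $s\in[0,t_k]$, has $\dot{}\,$-component in the $y$-direction equal to $a_k$ evaluated along that curve; hence
$$
\Pi_y(q_k)-\Pi_y(q_{k-1})=\int_0^{t_k}a_k\bigl(e^{sX_k}(q_{k-1})\bigr)\,ds .
$$
Now I would use that $a_k(0)=0$ (the adapted-coordinate condition $X_k(0)=\partial_k$) together with the modulus-of-continuity bound $|a_k(p)-a_k(0)|\le\tilde C\omega_{|p|}$. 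The key geometric fact is that every intermediate point $e^{sX_k}(q_{k-1})$ stays within Euclidean distance $\le 2|x|$ of the origin: indeed, by \eqref{eq-normX} each flow moves a point by at most $2|t_k|$ in norm, the $x$-displacement telescopes to exactly $|x|=|t|$, and — by the inductive control on the $y$-coordinate itself, or more cheaply by \eqref{eq-normX} applied once more to bound the $y$-displacement by the same order — the total displacement from $0$ is bounded by $2|x|$ for $\epsilon_0$ small (this is precisely the kind of estimate condition \eqref{eq-remaininside2} is designed to guarantee). Therefore $|a_k(e^{sX_k}(q_{k-1}))|\le\tilde C\omega_{2|x|}$, and summing over $k$,
$$
|y|=\Bigl|\sum_{k=1}^n\bigl(\Pi_y(q_k)-\Pi_y(q_{k-1})\bigr)\Bigr|\le\sum_{k=1}^n|t_k|\,\tilde C\omega_{2|x|}=|x|\,\tilde C\omega_{2|x|},
$$
using $\sum_k|t_k|=|t|=|x|$ and that $\omega$ is increasing. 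This is exactly the claimed bound.

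The only genuinely delicate point — the "main obstacle" — is justifying that the whole broken flow trajectory stays in the ball $B(0,2|x|)$ so that the argument of $\omega$ can be uniformly replaced by $2|x|$; a careless bound would give $2n|x|$ or worse inside $\omega$, which, while still fine for the qualitative statement, would not match the constant in the definitions of $D_2^\omega$, $H_2^\omega$. I would handle this by a short induction on $k$: assuming $|q_{k-1}|\le 2|x|$ (really one wants $|\Pi_y(q_{k-1})|$ small and $|\Pi_x(q_{k-1})|\le|x|$ automatically), the $k$-th leg adds at most $|t_k|$ to the $x$-part and at most $|t_k|\sup|a_k|\le|t_k|\tilde C\omega_{2|x|}$ to the $y$-part, and since $\sum|t_k|=|x|$ and $\tilde C\omega_{2|x|}$ is small for $\epsilon_0$ small, everything remains comfortably inside $B(0,2|x|)\subset U$; the membership $\mathcal W_\epsilon\subset U$ was already recorded via \eqref{eq-remaininside2}. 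Once this containment is in hand the rest is the routine telescoping estimate above.
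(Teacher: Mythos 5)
Your proposal is correct and follows essentially the same route as the paper: the first bullet is read off from the graph form of $T_\epsilon$, and the second is the telescoping estimate on the $y$-displacement using $a_k(0)=0$, the modulus of continuity of the $a_k$, and the containment of the broken trajectory in $B(0,2|x|)$ (the paper phrases this as comparing the curve $\tau$ with its shadow $v$ in the $x$-plane, which is the same computation). The one point you flag as delicate needs no induction: by condition \eqref{eq-normX} the speed along the trajectory is at most $2$ and the total flow time is $\sum_k|t_k|=|x|$, so $|\tau(w)|\le 2|x|$ directly, which is exactly how the paper disposes of it.
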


\begin{proof} For the first item simply note that $T_{\epsilon}(t_1,\dots,t_n) = (t_1,\dots,t_n, a(t_1,$ $\dots,t_n))$ which is a map defined for $|t_i| \leq \epsilon$.
So if $|x^i| \leq \epsilon$ then we have $T_{\epsilon}(x^1,\dots,x^n)=(x,a(x))$. Uniqueness then follows from the observation that it is a graph over the $x$ coordinates.

For the second let $q=T(t_1,\dots,t_n)$ and $\tilde{X}_i = sign(t_i)X_i$. Then, $q$ lies at the end of a piecewise $C^1$ curve $\tau :[0,|t|] \rightarrow U$ which is a concatenation of the curves $s\rightarrow e^{s \tilde{X}_i}(e^{t_{i-1} X_{i-1}}(\dots e^{t_1 X_1}(0)))$ for $0 \leq s \leq |t_i|$. So for a.e $s \leq |t|$, $\dot{\tau}(s) = \tilde{X}_{\ell(s)}(\tau(s))$ for some piecewise constant function $\ell(t)$.  Consider the vector-field $Z(s,p) = sign(t_{\ell(s)})\partial_{\ell(s)}$ with an integral curve $v(s)$ starting at $0$ which is a curve in the $x$ plane. We have $\Pi_x(\tau(s)) = \Pi_x(v(s))$ and $\Pi_y(v(s)) = 0$. So
$$
|\tau(s)-v(s)| = |\Pi_y(\tau(s)-v(s))|=|\Pi_y(\tau(s))| .
$$
But since $|t| =|x|$,
$$
|\tau(s)-v(s)| \leq \int_0^{s}| X_{\ell(w)}(\tau(w))-  \partial_{\ell(w)}(v(w))|dw \leq |x|\sup_{w \leq t}|X_{\ell(w)}(\tau(w))-  \partial_{\ell(w)}(v(w))|.
$$
Then, using $\partial_{\ell(w)}(v(w)) = X_{\ell(w)}(0)$ and $|\tau(w)| \leq |\tau| \leq 2|x|$ (by condition \eqref{eq-normX}), one has that
$$
|\tau(s)-v(s)| \leq |x| \tilde{C}\omega_{2|x|},
$$
which implies $|y| \leq |x| \tilde{C}\omega_{2|x|}$.

\end{proof}

\subsubsection{PartII: Description of the Spread of $B_{\Delta}(0,\epsilon)$ around $\mathcal{W}_{\epsilon_0}$}\label{ssection-spread}

Define for $p=(x,y)$ with $|x| \leq \epsilon_0$, $d_y(p,\mathcal{W}_{\epsilon_0})$ to be the distance of
$p$ to the point $q$ on $\mathcal{W}_{\epsilon_0}$ with the same $x$ coordinate as $p$ (by Lemma \ref{lem-propertiesofW}, there is a unique such point). For $\epsilon \leq \epsilon_0$ we can define the box around $\mathcal{W}_{\epsilon_0}$ (using the given smooth adapted coordinates) as follows:
$$
\mathcal{BW}_{\epsilon_0}(K_1, \epsilon) = \{ p=(x,y) \in U, \  |x| \leq \epsilon \ \ s.t \ \ d_y(p,\mathcal{W}_{\epsilon_0}) \leq K_1\epsilon^2\}.
$$

Then, the lemma we will prove in this section using Proposition \ref{prop-stokessubriemann} is the following (see figure \ref{fig-ballbox}):
\begin{lem}\label{lem-distancefromW}For $\mathcal{BW}_{\epsilon_0}(K, \epsilon)$ as defined above and $K_1,K_2$ constants given in Theorem \ref{thm-ballbox}, we have
$$
\mathcal{BW}_{\epsilon_0}(K_1, \frac{\epsilon}{4d_g}) \subset B_{\Delta}(0,\epsilon) \subset  \mathcal{BW}_{\epsilon_0}(K_2, 2nd_g\epsilon).
$$
\end{lem}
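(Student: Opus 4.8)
The plan is to prove the two inclusions of Lemma \ref{lem-distancefromW} separately, both by a careful application of Proposition \ref{prop-stokessubriemann} with the curve $\gamma_1$ chosen to be a curve realizing a point on $\mathcal{W}_{\epsilon_0}$ (a concatenation of flow segments of the $X_i$'s, so it lies in $\mathcal{W}_{\epsilon_0}$ and has controlled length), and $\gamma_2$ a $\Delta$-admissible curve of sub-Riemannian length at most $\epsilon$ starting at $0$. The $x$-projections will be arranged to have the same endpoint, so that $|\gamma_1(\varepsilon_1)-\gamma_2(\varepsilon_2)|$ is exactly the $\partial_y$-displacement, which by Lemma \ref{lem-propertiesofW} and the definition of $d_y(\cdot,\mathcal{W}_{\epsilon_0})$ is what we must estimate. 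The 2-chain $P$ in $\bar X_q\cap U$ has boundary the projected loop $\alpha_1^{-1}\circ\alpha_2$; its area is at most (constant)$\cdot|x|\cdot\ell$ since both projected curves live over a region of $x$-size comparable to $\ell$, so $|\int_P d\eta|\le |d\eta|_\Delta|_\infty\cdot|P|\lesssim |d\eta|_\Delta|_\infty\,\ell^2$, and the error term $|c|\le 4\ell\varepsilon\xi|d\eta|_\infty$ is of even higher order in $\ell$ (it carries an extra $\tilde C\omega_\ell$), hence negligible by the choices in \ref{sssection-fixU}, in particular \eqref{eq-estimate1}.

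For the outer inclusion $B_\Delta(0,\epsilon)\subset\mathcal{BW}_{\epsilon_0}(K_2,2nd_g\epsilon)$: take $p\in B_\Delta(0,\epsilon)$, so there is an admissible $\gamma_2$ from $0$ to $p$ with $\ell(\gamma_2)<\epsilon$, hence Euclidean length $<d_g\epsilon$ and $|\Pi_x p|\le$ (something like) $2nd_g\epsilon$ using \eqref{eq-normX}; this gives the $|x|\le 2nd_g\epsilon$ half of membership. For the $d_y$ estimate, let $q$ be the point of $\mathcal{W}_{\epsilon_0}$ over $\Pi_x p$, reached by $\gamma_1$ a concatenation of at most $n$ flow segments with total length comparable to $|\Pi_x p|\le 2nd_g\epsilon$; apply Proposition \ref{prop-stokessubriemann} with this $\gamma_1,\gamma_2$ to get $d_y(p,\mathcal{W}_{\epsilon_0}) = |p - q| \le \frac{1}{|\eta(\partial_y)|_{\inf}}(|\int_P d\eta|+|c|) \lesssim \frac{|d\eta|_\Delta|_\infty}{|\eta(\partial_y)|_{\inf}}\ell^2 \le K_2(2nd_g\epsilon)^2$, tracking the factors $(1+2n)^2$ and $c$ into the constant exactly as in the stated formula for $K_2$. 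Here the constant $c$ from Lemma \ref{lem-Gromov} enters through bounding the length of the realizing curve $\gamma_1$ on $\mathcal{W}_{\epsilon_0}$ (the Gromov-type reparametrization/comparison), and the area bound on $P$ must be done with care to extract the clean power $\epsilon^2$.

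For the inner inclusion $\mathcal{BW}_{\epsilon_0}(K_1,\tfrac{\epsilon}{4d_g})\subset B_\Delta(0,\epsilon)$: given $p=(x,y)$ with $|x|\le\tfrac{\epsilon}{4d_g}$ and $d_y(p,\mathcal{W}_{\epsilon_0})\le K_1(\tfrac{\epsilon}{4d_g})^2$, we must produce an admissible curve from $0$ to $p$ of length $<\epsilon$. First travel along $\mathcal{W}_{\epsilon_0}$ to the point $q$ over $\Pi_x p$ — a curve of length $\lesssim |x|$, comfortably less than a fraction of $\epsilon$. It remains to climb from $q$ to $p$ in the $\partial_y$ direction by an amount $\le K_1(\tfrac{\epsilon}{4d_g})^2$; this is done by the standard ``small loop'' construction: an admissible loop near $q$ built from flow segments of $X_i,X_j$ (with $i,j$ as in \eqref{eq-noninvolutive}) of size $\rho$ produces, by Proposition \ref{prop-stokessubriemann}, a $\partial_y$-displacement of size comparable to $\frac{m(d\eta|_\Delta)_{\inf}}{|\eta(\partial_y)|_\infty}\rho^2$ (the lower bound in \eqref{eq-mainprop} together with the sign statement \eqref{eq-mainprop2}, and \eqref{eq-estimate1} guarantees the main term dominates $|c|$); choosing $\rho\sim\sqrt{K_1}\cdot\tfrac{\epsilon}{4d_g}$ achieves the required height with total added length $O(\rho)$, and adjusting the $1/K_1 = 42\,|\eta(\partial_y)|_\infty/m(d\eta|_\Delta)_{\inf}$ bookkeeping (the factor $42$ absorbing the loop having a bounded number of segments plus slack for the triangle-inequality length budget and the error term $|c|$). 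The sign statement \eqref{eq-mainprop2} is essential here because we need the loop to push $y$ in the correct direction (and we may need to traverse the loop forwards or backwards depending on $\mathrm{sign}(y-y_q)$).

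The main obstacle I expect is the inner inclusion, specifically assembling the curve so that its \emph{total} sub-Riemannian length stays below $\epsilon$ while simultaneously hitting $p$ exactly: one must control not just the order of magnitude but the explicit numerical constants (the $4d_g$'s, $2nd_g$'s and $42$'s), which requires bounding the lengths of all the flow-segment concatenations via \eqref{eq-normX}, keeping everything inside $U$ via \eqref{eq-remaininside1}--\eqref{eq-remaininside2}, and ensuring the error term $|c|$ — whose bound involves $\omega_\ell$ and hence the loss of regularity — does not swamp the main quadratic term, which is exactly what \eqref{eq-estimate1} is designed to secure. The area estimate $|P|\lesssim\ell^2$ and the translation of $d_y$ into the Euclidean $\partial_y$-gap via Lemma \ref{lem-propertiesofW} are the two other places where one must be meticulous rather than clever.
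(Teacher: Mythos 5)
Your plan follows the paper's proof essentially verbatim: both inclusions are reduced to Proposition \ref{prop-stokessubriemann} with one curve running along $\mathcal{W}_{\epsilon_0}$ to the point over $\Pi_x p$, the inner inclusion via the small $X_i$--$X_j$ loop of size $\rho\sim\sqrt{K_1}\,\epsilon/(4d_g)$ together with the sign statement \eqref{eq-mainprop2} and a continuity/intermediate-value argument to hit $p$ exactly, and the outer inclusion via a filling $2$-chain of the projected cycle and the $|d\eta|_{\Delta}|_{\infty}$ area estimate, with \eqref{eq-estimate1} suppressing the $\omega$-error term exactly as you describe. One small correction: the Gromov constant $c$ enters not through bounding the length of the realizing curve on $\mathcal{W}_{\epsilon_0}$ (that is controlled directly by \eqref{eq-normX}) but through Lemma \ref{lem-Gromov}'s filling bound $|P|\le c|\alpha|^2$ for the projected $1$-cycle and the containment of $P$ in a $c|\alpha|$-neighbourhood of it, which is what guarantees $P\subset U$ via \eqref{eq-remaininside1}; your heuristic ``the projected curves live over a region of size $\ell$, hence $|P|\lesssim\ell^2$'' is not by itself a proof, since an arbitrary admissible curve of length $\le\epsilon$ may project to a badly winding loop, and the isoperimetric-type filling lemma is genuinely needed.
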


\begin{figure}
  \centering
  \includegraphics[width=200px]{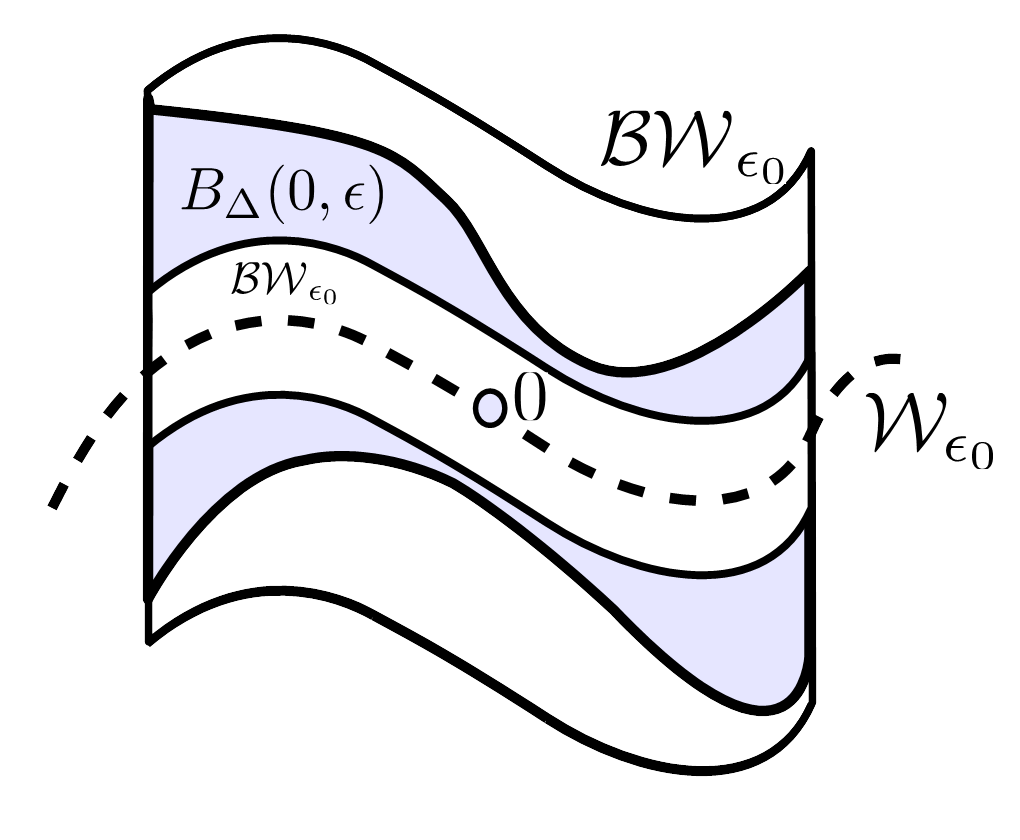}
  \caption{Pictorial representation of emma \ref{lem-distancefromW}.}\label{fig-ballbox}
\end{figure}

\begin{proof} [Proof of $\mathcal{BW}_{\epsilon_0}(K_1, \frac{1}{4d_g}\epsilon) \subset B_{\Delta}(0,\epsilon)$]

Given $\epsilon < \frac{\epsilon_0}{2nd_g}$, to prove the inclusion we need to show that if $p=(x,y)$ is such that $|x| \leq \frac{1}{4d_g}\epsilon$ and $d(p,\mathcal{W}_{\epsilon_0}) \leq K_1\frac{1}{16d^2_g}\epsilon^2$, then $d_{\Delta}(x,0) \leq \epsilon$. For this
it is enough to find a path $\psi$ between $p$ and $0$ which is admissible and
$\ell(\psi) \leq \epsilon$, since $d_{\Delta}(x,0) \leq \ell(\psi)$ where here $\ell(\cdot)$ is the length defined with respect to the metric $g$ defined on $\Delta$. This path $\psi$ will be written as the composition of two paths $\psi = \gamma \circ \tau$ where $\tau$ is a path that allows to travel along $x$ direction and $\gamma$ is a path which will allow us to travel in the $y$ direction and for which we will apply the Proposition \ref{prop-stokessubriemann}.

First we construct $\tau$. Let $q_1$ be the point in $\mathcal{W}_{\epsilon_0}$ that has the same $x$ coordinates as $p$ (since $|x| \leq \frac{\epsilon}{4d_g} < \epsilon_0$ such a point exists). Let $\tau$ be the curve in $\mathcal{W}_{\epsilon_0}$ described in Lemma \ref{lem-propertiesofW} that connects $0$ to $q_1$.
It is curve defined from $[0,|x|]$ to $U$. We have that by condition \eqref{eq-normX}
\begin{equation}\label{eq-taulength}
|\tau| \leq 2|x| \leq \frac{\epsilon}{2d_g}
\end{equation}

Using equation \eqref{eq-taulength}, we have that $\ell(\tau) \leq d_g |\tau| \leq \frac{\epsilon}{2}$.
Now we need to build the other admissible curve $\gamma$ that starts at $q_1$ and ends at $p$ and such that $\ell(\gamma) \leq \frac{\epsilon}{2}$. If we build this curve then we are done
since both $\tau$ and $\gamma$ are admissible and $\ell(\gamma \circ \tau) \leq \epsilon$.

The amount that we need to travel in the $\partial_y$ direction is given by the condition $d(p,\mathcal{W}_{\epsilon_0}) \leq K_1\frac{1}{16d^2_g}\epsilon^2$
which tells us that
\begin{equation}\label{eq-neededyrise}
|q_1 - p| \leq K_1\frac{1}{16d^2_g}\epsilon^2.
\end{equation}

Now we start building $\gamma$. The idea is similar to the one employed in \cite{Arn89} (see Section 36 of Chapter 7 and Appendix 4) which defines exterior differential of 1-forms in terms of loops tangent to the bundles that they annihilate. For any $\tilde{\epsilon} \leq \epsilon_0$, consider the curves defined on $[0,\tilde{\epsilon}]$ (the indexing of points below are chosen so that they are in direct alignment
with Proposition \ref{prop-stokessubriemann}):
$$
\kappa_1(s)=e^{sX_i}(q_1)  \quad \quad
\kappa_2(s)= e^{sX_j}( \kappa_1(\tilde{\epsilon})) \quad \quad \kappa_2(\tilde{\epsilon})= q,
$$
$$
\kappa_3(s)=e^{-sX_i}(q)  \quad \quad
\kappa_4(s)= e^{-sX_j}(\kappa_3(\tilde{\epsilon})) \quad \quad \kappa_4(\tilde{\epsilon})= q_2.
$$
These $X_i$ and $X_j$ are the vector-fields that satisfy $|d\eta(X_i,X_j)|_{\inf}>0$.
We let $\gamma(s;\tilde{\epsilon},q_1)$ be the parametrization for the curve obtained as the concatenation $\kappa_4\circ \kappa_3 \circ \kappa_2 \circ \kappa_1$ so that  $\dot{\gamma}(s)=X_{\ell(s)}(\gamma(s;\tilde{\epsilon},q_1))$ for a.e $s$ with $\ell(s)=i,j$. Also $q_2=\gamma(4\tilde{\epsilon}; \tilde{\epsilon}, q_0)$ becomes the end point of this curve (see figure \ref{fig-part1}). This curve is defined on $[0,4\tilde{\epsilon}]$ to $U$. Moreover by the condition \eqref{eq-remaininside2}, we have that
$\gamma(s;\tilde{\epsilon},q_0)$ is continuous with respect to $\tilde{\epsilon}$ since it is equal to $\gamma(s;\tilde{\epsilon},\tau(\tilde{\epsilon}))$ which is just a composition of $n+4$ integral curves of some $\pm X_{\ell}$ with integration times less than $\tilde{\epsilon}_0 \leq \epsilon_0$. Denoting the image of this curve as $\gamma_{\tilde{\epsilon}}$ we have $\ell(\gamma_{\tilde{\epsilon}}) \leq d_g|\gamma_{\tilde{\epsilon}}| \leq 8d_g\tilde{\epsilon}$.
Therefore the following lemma is sufficient to get $\gamma$.

\begin{figure}
  \centering
  \includegraphics[width=200px]{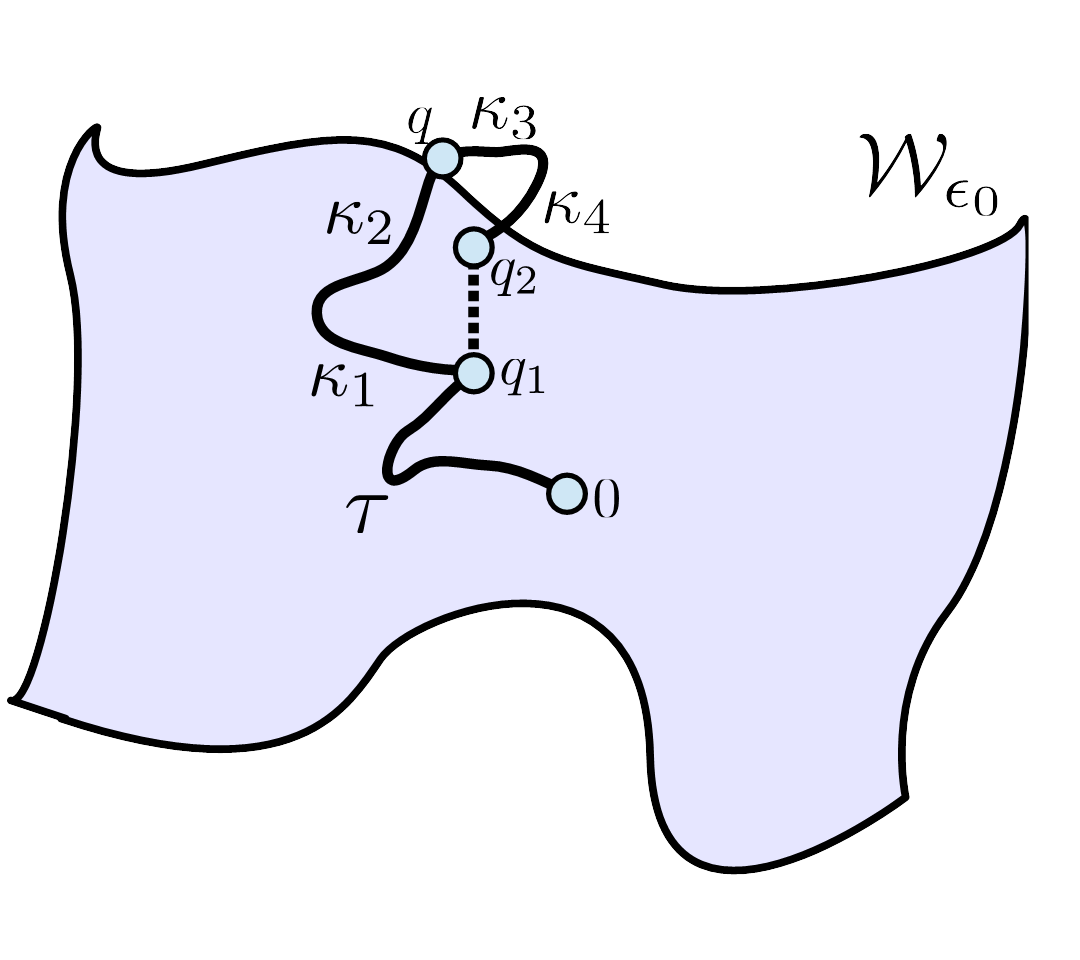}
  \caption{Trying to get $p=q_2$.}\label{fig-part1}
\end{figure}

\begin{lem}\label{lem-final}There exists some $\tilde{\epsilon}$ such that $\gamma(4\tilde{\epsilon};\tilde{\epsilon},q_1) = p$ with $\tilde{\epsilon} \leq \frac{\epsilon}{16d_g}$ \end{lem}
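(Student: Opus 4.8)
The plan is to reduce the statement to the intermediate value theorem applied to the scalar function of $\tilde{\epsilon}$ measuring the height gained by the loop $\kappa_4\circ\kappa_3\circ\kappa_2\circ\kappa_1$, and to pin that height between two multiples of $\tilde{\epsilon}^2$ by means of Proposition~\ref{prop-stokessubriemann}.

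First I would set up the scalar equation. Because $X_i,X_j$ are adapted, the net displacement of $\kappa_1,\dots,\kappa_4$ in the $x$-coordinates is $\tilde{\epsilon}e_i+\tilde{\epsilon}e_j-\tilde{\epsilon}e_i-\tilde{\epsilon}e_j=0$, so for every $\tilde{\epsilon}\le\epsilon_0$ the endpoint $q_2=\gamma(4\tilde{\epsilon};\tilde{\epsilon},q_1)$ has the same $x$-coordinates as $q_1$, hence as $p$. Consequently $\gamma(4\tilde{\epsilon};\tilde{\epsilon},q_1)=p$ is equivalent to the single equation $F(\tilde{\epsilon})=\Pi_y(p)-\Pi_y(q_1)$, where $F(\tilde{\epsilon}):=\Pi_y(q_2)-\Pi_y(q_1)$. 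The map $\tilde{\epsilon}\mapsto\gamma(4\tilde{\epsilon};\tilde{\epsilon},q_1)$ is a composition of the flows $e^{\pm\tilde{\epsilon}X_i},e^{\pm\tilde{\epsilon}X_j}$ applied to $q_1$, which by Proposition~\ref{prop-uniquebasis} and condition~\eqref{eq-remaininside2} is well defined and $C^1$, in particular continuous, on $[0,\epsilon_0]$, and $F(0)=0$; note also that $\epsilon<\epsilon_0/(2nd_g)$ forces $\epsilon/(16d_g)<\epsilon_0$.

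The core is a two-sided estimate $c_-\tilde{\epsilon}^2\le|F(\tilde{\epsilon})|\le c_+\tilde{\epsilon}^2$ with $F$ of constant sign, obtained from Proposition~\ref{prop-stokessubriemann} applied with $\gamma_2$ equal to the loop $\kappa_4\circ\kappa_3\circ\kappa_2\circ\kappa_1$ (admissible, of length $O(\tilde{\epsilon})$ by~\eqref{eq-normX}, ending at $q_2$) and $\gamma_1$ the constant curve at $q_1$; the hypothesis $B(q_1,2\ell)\subset U$ holds for small $\tilde{\epsilon}$ by~\eqref{eq-remaininside1}. The projection along $\partial_y$ of this loop to $\bar{X}_{q_1}$ bounds the parallelogram $P$ parametrized by $(a,b)\in[0,\tilde{\epsilon}]^2\mapsto q_1+aX_i(q_1)+bX_j(q_1)$, so that $\int_P d\eta=\tilde{\epsilon}^2\int_{[0,1]^2}d\eta_{w(a,b)}(X_i(q_1),X_j(q_1))\,da\,db$. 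Using the modulus of continuity of $X_i,X_j$, the normalizations~\eqref{eq-normX},\eqref{eq-normX2} and the smallness condition~\eqref{eq-estimate1}, the integrand stays within a controlled fraction of $d\eta_{q_1}(X_i,X_j)$, which by~\eqref{eq-noninvolutive} is nonzero and of one fixed sign throughout $U$; hence $\left|\int_P d\eta\right|$ is comparable to $\tilde{\epsilon}^2\,|d\eta(X_i,X_j)|_{\inf}$ and carries that fixed sign. The correction term obeys $|c|\le 4\ell\varepsilon\xi|d\eta|_{\infty}$ with $\ell,\varepsilon=O(\tilde{\epsilon})$ and $\xi=O(\omega_{\tilde{\epsilon}})$ by~\eqref{eq-normX},\eqref{eq-normX3}, so~\eqref{eq-estimate1} makes $|c|$ negligible against $\tilde{\epsilon}^2\,|d\eta(X_i,X_j)|_{\inf}$. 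Feeding these into~\eqref{eq-mainprop} and~\eqref{eq-mainprop2} yields the desired sandwich for $|F(\tilde{\epsilon})|$ together with $\mathrm{sign}\,F(\tilde{\epsilon})=\mathrm{sign}\,d\eta(X_i,X_j)$, which is independent of $\tilde{\epsilon}$.

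Finally I would conclude by the intermediate value theorem. If $\Pi_y(p)=\Pi_y(q_1)$ take $\tilde{\epsilon}=0$; otherwise, after possibly replacing $X_i$ by $-X_i$ (which preserves $|d\eta(X_i,X_j)|_{\inf}>0$ and reverses the orientation of $P$, hence the sign of $F$) we may assume $F$ and $\Pi_y(p)-\Pi_y(q_1)$ share a sign. By~\eqref{eq-neededyrise}, $|\Pi_y(p)-\Pi_y(q_1)|\le K_1\epsilon^2/(16d_g^2)$, while the lower bound of the previous paragraph, combined with $|d\eta(X_i,X_j)|_{\inf}\ge\tfrac12 m(d\eta|_{\Delta})_{\inf}$ (from~\eqref{eq-normX2}) and the definition $1/K_1=42\,|\eta(\partial_y)|_{\infty}/m(d\eta|_{\Delta})_{\inf}$, gives $|F(\epsilon/(16d_g))|\ge K_1\epsilon^2/(16d_g^2)$. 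Since $F$ is continuous with $F(0)=0$, its image on $[0,\epsilon/(16d_g)]$ is an interval containing $0$ and $F(\epsilon/(16d_g))$, hence contains $\Pi_y(p)-\Pi_y(q_1)$; any preimage $\tilde{\epsilon}$ does the job and satisfies $\tilde{\epsilon}\le\epsilon/(16d_g)$. The delicate step is the lower bound $|F(\tilde{\epsilon})|\ge c_-\tilde{\epsilon}^2$: this is precisely where non-integrability ($d\eta(X_i,X_j)\neq0$) enters, and it only survives because the error $c$ forced by the non-differentiability of $\Delta$ is quadratically small with an extra factor $\omega_{\tilde{\epsilon}}$, which is exactly what the choice of $U$ and $\epsilon_0$ in~\eqref{eq-estimate1} is engineered to guarantee; matching the numerical constant $42$ in $K_1$ against these estimates is the remaining bookkeeping.
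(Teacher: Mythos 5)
Your proposal is correct and follows essentially the same route as the paper: reduce the lemma to an intermediate value theorem argument for the continuous height function of the loop, then use Proposition \ref{prop-stokessubriemann} together with the parallelogram estimate and condition \eqref{eq-estimate1} to get the quadratic lower bound $|F(\tilde{\epsilon})|\gtrsim\tilde{\epsilon}^2 m(d\eta|_{\Delta})_{\inf}$ and the sign control (reversed by flipping $X_i$). The only cosmetic difference is that you invoke the proposition with $\gamma_1$ the constant curve at $q_1$ and $\gamma_2$ the full loop, whereas the paper splits the loop at its midpoint $q$ into $\kappa_1^{-1}\circ\kappa_2^{-1}$ and $\kappa_3\circ\kappa_4$; both choices produce the same parallelogram $P$ and the same bookkeeping against the constant $42$ in $K_1$.
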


If we can show this we are done since then setting $\gamma=\gamma_{\tilde{\epsilon}}$,
$\ell(\gamma) \leq d_g|\gamma| \leq 8d_g\tilde{\epsilon} \leq \frac{1}{2}\epsilon$.

\begin{proof}

To prove the lemma it is sufficient to show the following:

\begin{itemize}

\item There exists some $\bar{\epsilon}$ with $\bar{\epsilon} \leq \frac{\epsilon}{16d_g}$ such that  $|\gamma(4\bar{\epsilon};\bar{\epsilon},q_1)-q_1| \geq  K_1\frac{1}{16d^2_g}\epsilon^2.$

\item  By reverting either $X_i$ or $X_{j}$ we can go in the opposite direction along the $\partial_y$.
\end{itemize}

The last item allows us to travel in both directions (with respect to $\partial_y$) while the second item guarantees that we can travel more than $|p-q_1|$. Then, since  $\gamma(4{\epsilon};{\epsilon},q_1)$ continuous with respect to ${\epsilon}$ and it always lies on the one dimensional $\partial_y$ axis passing through $p$ and $q_1$ and satisfies $\gamma(0;0,q_1)=q_1$, there exists some $\tilde{\epsilon} \leq \bar{\epsilon}$ (so that automatically  $\tilde{\epsilon} \leq \frac{\epsilon}{16d_g}$ )  for which $q_2=\gamma(4\tilde{\epsilon};\tilde{\epsilon},q_1)=p$ and we are done with the lemma. So we will now prove these items.

To apply Proposition \ref{prop-stokessubriemann}, set
$$
 \gamma_1 = \kappa_1^{-1} \circ \kappa_2^{-1}, \quad \quad \gamma_2 = \kappa_3 \circ \kappa_4,
$$
so that $\gamma_1(0)=\gamma_2(0) = q$ and $\gamma_1(\varepsilon_1)=q_1$, $\gamma_2(\varepsilon_2) = q_2$ which have the same $x$ coordinates due to the form of the vector-fields $X_i,X_j$.  We also have (in the terminology of Proposition  \ref{prop-stokessubriemann}) $\varepsilon \leq 2\epsilon$, $\ell \leq 4\epsilon$. By conditions \eqref{eq-normX}, \eqref{eq-normX2}, \eqref{eq-normX3} we have
$$
\xi \leq (2n)^{n+2}\tilde{C}\omega_{8\epsilon}.
$$
Then, by condition \eqref{eq-remaininside1} we have that
$B(2\ell ,q_0) \subset U$. Now it remains to build a surface $P$ at the point $q$ so that $P \subset \bar{X}_{q} \cap U$. Note that projection of $\gamma_i$ along $\partial_y$ to $\bar{X}_{q}$ is simply the boundary of a parallelogram $P$ whose sides are given by the vectors $X_i(q)$,$X_j(q)$ and have length less than $\ell \leq 4\epsilon$. Therefore given any point $z \in P$, $|z| \leq |z-q| + |q_1 - q| + |q_1|$. But
$|q_1| \leq 2|x| \leq \frac{\epsilon}{2}$, $|q_1 - q| \leq \ell \leq 4\epsilon$, and $|z-q| \leq 2\ell \leq 8\epsilon$. So
$|p| \leq 13\epsilon$ which by condition \eqref{eq-remaininside1} gives us that $P \subset U$. Therefore all the conditions required for application
of \ref{prop-stokessubriemann} is satisfied. So

\[
\frac{1}{|\eta(\partial_y)|_{\infty}}\bigg(\bigg|\int_P d\eta \bigg| - c\bigg) \leq |\gamma_1(\epsilon_1) - \gamma_2(\epsilon_2)|=|q_2-q_1|,
\]
and
\[
sign\bigg(\int_{\beta}\eta \bigg) = sign\bigg(\int_P d\eta + c\bigg),
\]
where also by condition \ref{eq-estimate1},
\begin{equation}\label{eq-cnorm}
|c| \leq 4 \ell \varepsilon \xi |d\eta|_{\infty} \leq  (2n)^{n+7} \epsilon^2\tilde{C}\omega_{8\epsilon}|d\eta|_{\infty}   \leq \frac{1}{4}|d\eta(X_i,X_j)|_{\inf}\epsilon^2.
\end{equation}

To apply these result we need to calculate $\int_P d\eta$ which is the content of the next lemma:
\begin{lem}For $P$ as defined above,
\begin{equation}\label{eq-parallelogramintegral}
\bigg|\int_{P}d\eta \bigg|  \geq \frac{3}{4}\epsilon^2 |d\eta(X_i,X_j)|_{\inf}.
\end{equation}
\begin{equation}\label{eq-parallelogramintegral2}
sign\bigg(\int_{P}d\eta \bigg) =sign(d\eta_{q}(X_i(q),X_j(q))
\end{equation}

\end{lem}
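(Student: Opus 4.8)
The plan is to exploit that $P$ is a \emph{flat} $2$-cell: by construction it is the parallelogram lying in the $2$-plane through $q$ directed by $X_i(q)$ and $X_j(q)$, with edge vectors $\pm\epsilon X_i(q)$ and $\pm\epsilon X_j(q)$ and the orientation prescribed by requiring $\partial P$ to be the $\partial_y$-projection of $\gamma_1^{-1}\circ\gamma_2$. Hence $P$ is the image of the affine map $w(a,b)=q+a(\pm\epsilon X_i(q))+b(\pm\epsilon X_j(q))$ on the unit square, and pulling $d\eta$ back along $w$ and using bilinearity of the $2$-form $d\eta$ gives
$$
\int_{P}d\eta\;=\;\pm\,\epsilon^{2}\int_{0}^{1}\!\!\int_{0}^{1}d\eta_{w(a,b)}\big(X_i(q),X_j(q)\big)\,da\,db .
$$
So everything comes down to controlling the single scalar function $z\mapsto d\eta_{z}\big(X_i(q),X_j(q)\big)$ as $z$ ranges over $P$; I want to show it never vanishes on $P$, keeps the sign of $d\eta_{q}\big(X_i(q),X_j(q)\big)$, and has modulus at least $\tfrac34|d\eta(X_i,X_j)|_{\inf}$.

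The crux is to pass from this \emph{frozen} frame $\{X_i(q),X_j(q)\}$ to the \emph{moving} frame $\{X_i(z),X_j(z)\}$, on which the value of $d\eta$ is understood. For $z\in P$, bilinearity gives
$$
d\eta_{z}\big(X_i(z),X_j(z)\big)-d\eta_{z}\big(X_i(q),X_j(q)\big)=d\eta_{z}\big(X_i(z)-X_i(q),X_j(z)\big)+d\eta_{z}\big(X_i(q),X_j(z)-X_j(q)\big),
$$
so, using the modulus of continuity $|X_{\ell}(z)-X_{\ell}(q)|\le\tilde{C}\omega_{|z-q|}$ of the adapted basis together with $|X_{\ell}|_{\infty}\le 2$ from \eqref{eq-normX}, this difference is at most $4|d\eta|_{\infty}\tilde{C}\omega_{|z-q|}$. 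Since $P$ has diameter bounded by a fixed small multiple of $\epsilon$, in particular well below $13(c+1)\epsilon_{0}$, monotonicity of $\omega$ and condition \eqref{eq-estimate1} (which is arranged precisely to absorb such error terms, cf.\ Remark~\ref{rem-start}) force this difference to be strictly less than $\tfrac14|d\eta(X_i,X_j)|_{\inf}$. On the other hand $d\eta_{z}\big(X_i(z),X_j(z)\big)=d\eta_{z}(X_i,X_j)$, which by the defining property \eqref{eq-noninvolutive} of $U$ and continuity of $d\eta$ is nowhere zero on $U$, of one fixed sign there --- in particular of the sign of $d\eta_{q}\big(X_i(q),X_j(q)\big)$ --- and of modulus at least $|d\eta(X_i,X_j)|_{\inf}$.

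Putting the two estimates together, for every $z\in P$ the number $d\eta_{z}\big(X_i(q),X_j(q)\big)$ has the sign of $d\eta_{q}\big(X_i(q),X_j(q)\big)$ and satisfies $|d\eta_{z}\big(X_i(q),X_j(q)\big)|\ge|d\eta(X_i,X_j)|_{\inf}-\tfrac14|d\eta(X_i,X_j)|_{\inf}=\tfrac34|d\eta(X_i,X_j)|_{\inf}$. The two claims of the lemma then follow from the integral formula above: the integrand $d\eta_{w(a,b)}\big(X_i(q),X_j(q)\big)$ has constant sign on the unit square, so the modulus of $\int_{P}d\eta$ equals $\epsilon^{2}$ times the integral of the modulus of the integrand, which gives \eqref{eq-parallelogramintegral}; and reading off the (constant) sign of this integrand, say at the corner $a=b=0$, and accounting for the fixed orientation of $P$, yields \eqref{eq-parallelogramintegral2}. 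The only step that needs real care is this frozen-versus-moving-frame comparison, together with the bookkeeping that $P$ is small enough for \eqref{eq-estimate1} to apply; I expect it to be the main obstacle, since it is exactly where the non-differentiability of $\Delta$ is felt --- it forces the modulus $\omega$, and hence the eventual bending of the sub-Riemannian balls, into the estimates --- and it is where the choices of $U$ and $\epsilon_{0}$ from Subsection~\ref{sssection-fixU} get used.
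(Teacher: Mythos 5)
Your proposal is correct and follows essentially the same route as the paper: parametrize $P$ by the frozen frame $\{X_i(q),X_j(q)\}$, compare $d\eta_z(X_i(q),X_j(q))$ with $d\eta_z(X_i(z),X_j(z))$ via bilinearity and the modulus of continuity, and invoke conditions \eqref{eq-noninvolutive} and \eqref{eq-estimate1} to get sign constancy and the $\tfrac34$ lower bound. Your two-term exact expansion of the frame difference is a minor (and in fact slightly cleaner) algebraic variant of the paper's three-term one, but the argument is otherwise identical.
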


\begin{proof}
Note that $P$ is a parallelogram based at $q$ and always tangent to $X_i(q)$ and $X_j(q)$. We will denote by $P(s_1,s_2)$ a parametrization for $P$ such that $\frac{\partial P}{\partial s_1}(s_1,s_2)=X_i(q)$ and $\frac{\partial P}{\partial s_2}(s_1,s_2)=X_j(q)$ for all $0 \leq s_1,s_2 \leq \epsilon$. Then, we have in this parametrization,
$$
\int_{P}d\eta = \int_{0}^{\epsilon}ds_1 \int_{0}^{\epsilon}ds_2 \ \  d\eta_{P(s_1,s_2)}(X_i(q),X_j(q)).
$$
But
\[
\begin{aligned}
d\eta_p(X_i(q),X_j(q)) &= d\eta_p(X_i(p),X_j(p)) \\
&+ d\eta_p(X_i(q)-X_i(p),X_j(q)) + d\eta_p(X_i(q),X_j(q)-X_j(p))\\
&+d\eta_p(X_i(q)-X_i(p),X_j(q)-X_j(p)).\\
\end{aligned}
\]
Note that $|X_{\ell}(q)-X_{\ell}(p)| \leq \tilde{C}\omega_{|q-p|}$ and for $p \in P$,  $|q-p| \leq 13\epsilon$.
 So
\[
\begin{aligned}
|d\eta_p(X_i(q)&-X_i(p),X_j(q)) + d\eta_p(X_i(q),X_j(q)-X_j(p))\\
&+d\eta_p(X_i(q)-X_i(p),X_j(q)-X_j(p))| \leq   |d\eta|_{\infty}\tilde{C}\omega_{13\epsilon}(4 + \tilde{C}\omega_{13\epsilon}),\\
\end{aligned}
\]
and therefore
\begin{equation}\label{eq-parallelogramestimate}
d\eta_p(X_i(q),X_j(q)) \geq d\eta_p(X_i(p),X_j(p)) - |d\eta|_{\infty}\tilde{C}\omega_{13\epsilon}(4 + \tilde{C}\omega_{13\epsilon}),
\end{equation}
and
\begin{equation}\label{eq-parallelogramestimate2}
d\eta_p(X_i(q),X_j(q)) \leq d\eta_p(X_i(p),X_j(p)) + |d\eta|_{\infty}\tilde{C}\omega_{13\epsilon}(4 + \tilde{C}\omega_{13\epsilon}).
\end{equation}
Then, using the condition \eqref{eq-estimate1} we have that
\begin{equation}\label{eq-bounds}
\frac{3}{4}d\eta_p(X_i(p),X_j(p))  \leq d\eta_p(X_i(q),X_j(q)) \leq \frac{5}{4}d\eta_p(X_i(p),X_j(p)). \end{equation}
So for all $p \in P$, $sign(d\eta_p(X_i(q),X_j(q)) )=sign(d\eta_p(X_i(p),X_j(p)) )$.
But in $U$, $d\eta_p(X_i(p),X_j(p))$ is never $0$ and therefore never changes sign so this proves equation \eqref{eq-parallelogramintegral2}.
This also means that $d\eta_p(X_i(q),X_j(q))$ never changes sign and using equation \eqref{eq-bounds} we get,
$$
\bigg|\int_{P}d\eta \bigg| = \int_{0}^{\epsilon}ds_1 \int_{0}^{\epsilon}ds_\frac{3}{4} |d\eta_{P(s_1,s_2)}(X_i(q),X_j(q))| \geq \epsilon^2 |d\eta(X_i,X_j)|_{\inf}.
$$
\end{proof}

Now we have that $sign(\int_{\beta}\eta) = sign(\int_P d\eta + c)$ where by the equation \eqref{eq-parallelogramintegral} in the previous lemma and equation \eqref{eq-cnorm}
$$
|c| \leq \frac{1}{4}|d\eta(X_i,X_j)|_{\inf}\epsilon^2\leq \frac{1}{3}\bigg |\int_{P}d\eta \bigg |.
$$
So $sign(\int_P d\eta + c)=sign(\int_Pd\eta) = sign(d\eta_{q}(X_i,X_j))$ (which can be reverted by changing the direction of, say $X_i$).
Moreover
$$
|\alpha(4\epsilon;\epsilon,q_1)-q_1| \geq \frac{1}{|\eta(\partial_y)|_{\infty}}\bigg(\bigg|\int_{P}d\eta \bigg|  - |c|\bigg) \geq \frac{8}{12}\epsilon^2 \frac{|d\eta(X_i,X_j)|_{\inf}}{|\eta(\partial_y)|_{\infty}}.
$$
Since by condition \ref{eq-normX2} we have $|d\eta(X_i,X_j)|_{\inf} \geq \frac{1}{1.75}m(d\eta|_{\Delta})_{\inf}$, replacing now $\epsilon$ with $\bar{\epsilon}$
\begin{equation}\label{eq-endpoint}
|\alpha(4\bar{\epsilon};\bar{\epsilon},q_1)-q_1| \geq  \frac{8}{21}\bar{\epsilon}^2 \frac{m(d\eta|_{\Delta})_{\inf}}{|\eta(\partial_y)|_{\infty}}.
\end{equation}
We require this quantity to be bigger than $ K_1\frac{1}{16d^2_g}\epsilon^2$ $\geq  K_1\frac{1}{16d^2_g}256 d_g^2\bar{\epsilon}^2$=$ 16K_1\bar{\epsilon}^2$
So we need to satisfy
$$
\frac{1}{42}\bar{\epsilon}^2 \frac{m(d\eta|_{\Delta})_{\inf}}{|\eta(\partial_y)|_{\infty}} \geq K_1\bar{\epsilon}^2.
$$
This is satisfied since
$$
K_1 =  \frac{1}{42} \frac{m(d\eta|_{\Delta})_{\inf}}{|\eta(\partial_y)|_{\infty}}
$$

\end{proof}
This finishes the proof of the inclusion $\mathcal{BW}_{\epsilon_0}(K_1,\frac{\epsilon}{4d_g}) \subset B_{\Delta}(0,\epsilon) $.

\end{proof}

\begin{proof} [Proof of  $B_{\Delta}(0,\epsilon) \subset  \mathcal{BW}_{\epsilon_0}(K_2, 2nd_g\epsilon)$]

To prove this inclusion we need to prove that if $p=(x,y) \in U$ such that
$d_{\Delta}(p,0) \leq \epsilon$, then $|x| \leq 2nd_g\epsilon$ and $d(p,\mathcal{W}_{\epsilon_0}) \leq K_24n^2d_g^2\epsilon^2$

$d_{\Delta}(p,0) \leq \epsilon$ means that there exists an admissible curve $\gamma_1$ that connects $0$ to $p$ such that $\ell(\gamma_1) \leq 2\epsilon$. Then
$|\gamma_1| \leq d_g\ell(\gamma_1) \leq 2d_g\epsilon$. Since for all $i$ $|x^i| \leq |\gamma_1|$ we get
$|x| \leq 2nd_g\epsilon $ trivially. So it remains to estimate $d(p,\mathcal{W}_{\epsilon_0})$.

The rest of the proof follows the method given in \cite{Gro96} by Gromov. The only essential difference is that the term $|d\eta|_{\infty}$ appearing there is replaced by $|d\eta|_{\Delta}|_{\infty}$ which
will be thanks to Proposition \ref{prop-stokessubriemann}.

We first state a lemma due to Gromov, (\cite{Gro83}, Sublemma $3.4$.B, see also Corollary $2.3$ in \cite{Sim10}) specialized to lower dimension:
\begin{lem}\label{lem-Gromov}For every compact Riemannian manifold $S$, there exists constants $\delta_S,c_S>0$ such that for any 1-cycle $\gamma$ in $S$ with length less than $\delta_S$, there exists a 2-chain $\Gamma$ in $S$, bounded by $\gamma$ such that
$$
|\Gamma| \leq c_S |\gamma|^2,
$$
and $\Gamma$ is contained in the $\varrho$ neighbourhood of $\gamma$, where $\varrho=c_S|\gamma|$.
\end{lem}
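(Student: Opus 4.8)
The statement is classical --- it is Sublemma~3.4.B of \cite{Gro83} --- so rather than a new argument I will sketch the standard one, which is the natural route. The idea is to localise near a point of $\gamma$, pull the cycle back to Euclidean space via the exponential map, fill it there by the straight cone, and push the filling forward, controlling every quantity by the bounded geometry of the compact manifold $S$.

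\textbf{Step 1: uniform geometry.} Since $S$ is compact, its injectivity radius $r_0>0$ and its sectional curvature is two-sidedly bounded. Hence there is a constant $\Lambda\ge 1$, depending only on $S$, such that for every $p\in S$ the map $\exp_p$ is a diffeomorphism from the Euclidean ball $B(0,r_0)\subset T_pS\cong\mathbb{R}^n$ onto $B(p,r_0)$, and in the resulting geodesic normal coordinates the metric satisfies
\[
\Lambda^{-1}\,g_{\mathrm{eucl}}\ \le\ g\ \le\ \Lambda\,g_{\mathrm{eucl}}
\]
pointwise; these bounds come from Rauch/Jacobi-field comparison together with the uniform curvature and injectivity-radius bounds. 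Thus the chart is $\sqrt{\Lambda}$-bi-Lipschitz from $(B(p,r_0),g)$ to a Euclidean ball, so lengths of curves, areas of $2$-chains, and distances inside $B(p,r_0)$ agree with their Euclidean counterparts up to factors controlled by $\Lambda$; moreover the radial geodesics are exactly the Euclidean rays, so for $|w|_{\mathrm{eucl}}\le r_0$ one has $d(p,\exp_p(w))\le\sqrt{\Lambda}\,|w|_{\mathrm{eucl}}$ and, by the normalisation $g_p=g_{\mathrm{eucl}}$ at the origin, $d(p,\exp_p(w))=|w|_{\mathrm{eucl}}$ when this equals $|w|_{g_p}$.

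\textbf{Step 2: the construction.} Set $\delta_S:=r_0/(2\sqrt{\Lambda})$ and let $\gamma$ be a $1$-cycle with $L:=|\gamma|<\delta_S$. Pick a point $p=\gamma(0)$ on $\gamma$; every point $\gamma(t)$ is joined to $p$ along $\gamma$ by an arc of length $\le L/2$, so $\gamma\subset \overline{B(p,L/2)}\subset B(p,r_0)$. Let $\tilde\gamma=\exp_p^{-1}\circ\gamma$, a $1$-cycle in $T_pS$ based at the origin with $\sup_t|\tilde\gamma(t)|_{\mathrm{eucl}}=\sup_t d(p,\gamma(t))\le L/2$ and Euclidean length $\le\sqrt{\Lambda}\,L$. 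Partition $\tilde\gamma$ into finitely many $C^1$ arcs and cone each from the origin by $(t,s)\mapsto s\,\tilde\gamma(t)$; summing these $2$-cells gives a $2$-chain $\tilde\Gamma$ in $\mathbb{R}^n$ with $\partial\tilde\Gamma=\tilde\gamma$ (the cone kills the cycle), contained in the convex hull of $\tilde\gamma$ hence in $\overline{B(0,L/2)}$, and of Euclidean area at most $\tfrac12\big(\sup_t|\tilde\gamma(t)|\big)\cdot\mathrm{length}(\tilde\gamma)\le \tfrac14\sqrt{\Lambda}\,L^2$. Pushing forward, $\Gamma:=(\exp_p)_*\tilde\Gamma$ is a $2$-chain in $S$ with $\partial\Gamma=\gamma$, contained in $B(p,\sqrt{\Lambda}\,L/2)$ --- hence in the $\varrho$-neighbourhood of $\gamma$ with $\varrho=\sqrt{\Lambda}\,L/2$ --- and with $|\Gamma|\le \Lambda\cdot\tfrac14\sqrt{\Lambda}\,L^2$. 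Taking $c_S=\max\{\tfrac14\Lambda^{3/2},\tfrac12\sqrt{\Lambda}\}$ yields both assertions of the lemma.

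\textbf{Main obstacle and remarks.} The only substantive point is the \emph{uniformity} of $\Lambda$: that in geodesic normal coordinates the metric is two-sidedly comparable to the Euclidean one on a ball of a radius that does not shrink as the base point varies. This is exactly where compactness of $S$ enters --- through the uniform lower bound on the injectivity radius and the uniform two-sided curvature bounds; one does not even need the sharp comparison, any $\Lambda$ valid uniformly suffices and exists by a routine compactness argument. Everything else (the elementary area bound for a Euclidean cone, the transfer of area, length and distance across a bi-Lipschitz chart, and the bookkeeping of constants) is routine. A slightly cleaner variant avoiding chart computations is to contract $\gamma$ to $p$ directly along minimising geodesics via
\[
H(t,s)=\exp_p\!\big(s\,\exp_p^{-1}\gamma(t)\big),
\]
and to bound the area of the swept surface by Jacobi-field estimates; this gives the same conclusion with the same dependence of $\delta_S$ and $c_S$ on $S$.
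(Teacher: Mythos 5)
Your argument is correct, but it is worth noting that the paper does not actually prove this lemma: it is quoted verbatim from Gromov (\cite{Gro83}, Sublemma 3.4.B), and Remark \ref{rem-Gromov} only sketches Gromov's strategy, which is \emph{extrinsic} --- embed $S$ into some $\mathbb{R}^N$, cone the cycle off a point of itself inside $\mathbb{R}^N$, and then push the Euclidean filling back onto $S$ by the normal (tubular-neighbourhood) projection, whose Lipschitz constant on a uniform tube is controlled by compactness. Your route is \emph{intrinsic}: you replace the ambient embedding by the exponential chart at a point of $\gamma$, cone in $T_pS$, and push forward, paying only the bi-Lipschitz constant $\Lambda$ of normal coordinates. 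Both reduce to the same elementary Euclidean cone estimate $|\mathrm{Cone}(\tilde\gamma)|\le\tfrac12\sup_t|\tilde\gamma(t)|\cdot\mathrm{length}(\tilde\gamma)$, and both make the constants depend only on uniform geometric data of $S$ (injectivity radius and curvature bounds for you; the embedding and $N$ for Gromov). Your version avoids tubular-neighbourhood machinery at the cost of the Rauch/Jacobi comparison input; Gromov's version is the one whose constants the paper tracks in Remark \ref{rem-Gromov}, where it matters that in the actual application $S$ is an affine slice of $\mathbb{R}^n$ with the flat metric, so the embedding is an isometry, no projection is needed, and $c_S,\delta_S$ depend only on $n$ --- your proof specialises to exactly the same conclusion there since $\Lambda=1$. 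One small point you should make explicit: a $1$-cycle need not be a single closed loop, so the assertion that every point of $\gamma$ lies within arc-length $L/2$ of the base point fails for a formal sum of disjoint loops; the fix is routine (fill each loop separately; the areas $\sum c_SL_i^2\le c_S(\sum L_i)^2$ and the neighbourhood radii $c_SL_i\le c_S L$ still satisfy the required bounds), but as written the step is stated only for connected $\gamma$.
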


\begin{rem}\label{rem-Gromov}
Here $|\cdot|$ denotes the norm of the chosen metric on $S$ and the constants $\delta_S,c_S$ depend on this metric. We will always apply this result to closures of precompact open submanifolds of $n$ dimensional Euclidean space which are given by
$S = U\cap \text{span}\{\frac{\partial}{\partial x^i}\}|_{q}$ for some $q \in U$. They will have the usual Euclidean inner product. These spaces are affine translates of each other (since $U$ is a box) all equipped with Euclidean metrics for which these affine translations are isometries. Therefore it is clear that whatever constant $c_S$ and $\delta_S$ works for one, also works for the other. We denote these constants simply by $\delta$ and $c$.
Actually note that the proof of this lemma (\cite{Gro83}, Sublemma $3.4$.B), starts by first finding an embedding $\phi$ of $S$ into some $\mathbb{R}^N$. Then, a 2-chain with the required properties is constructed inside $\mathbb{R}^N$ and normally projected back to $S$. So the constant $c_S$ and $\delta_S$ depend only on the embedding and $N$. In our case since we are working with affine translates of open subsets of $\mathbb{R}^n$, the constants $c$ and $\delta$ depend only on the dimension $n$ since the embedding becomes an isometry and normal projection is not required.
\end{rem}

Now since $2nd_g\epsilon \leq \epsilon_0$, there exists a point $q$ on $\mathcal{W}_{\epsilon_0}$ which has the same $x$ coordinates as $p$. This means
 $d(p,\mathcal{W}_{\epsilon_0})=|q-p|$ and so we need to show that $|q-p| \leq  K_24n^2d_g^2\epsilon^2$.
We can connect $p$ to $0$ by first going from $0$ along $\mathcal{W}_{\epsilon_0}$ with an admissible path
$\gamma_2$ to $q$ and then going in the $\partial_y$ direction with a length parametrized segment $\beta$ (see figure
\ref{fig-part2}). Then, $\gamma_1, \gamma_2, \beta$ forms a 1-cycle.

\begin{figure}
  \centering
  \includegraphics[width=200px]{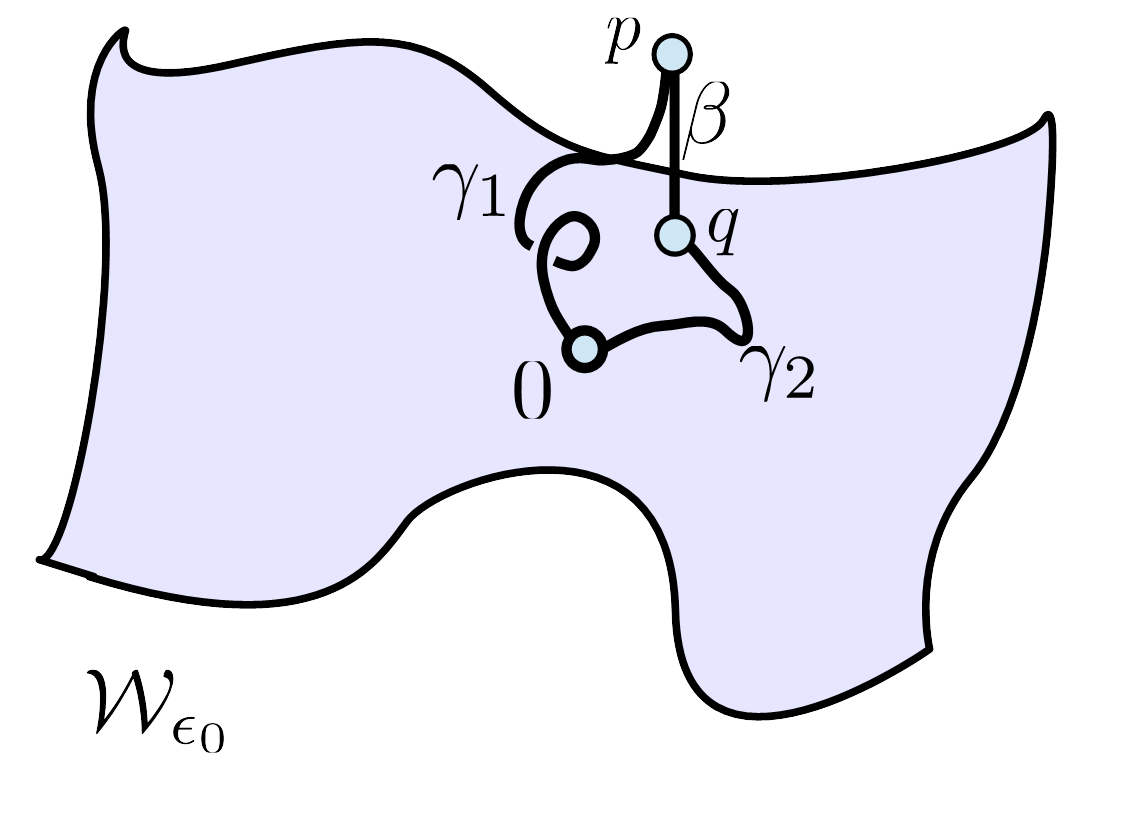}
  \caption{The Curve $\gamma_2\circ \beta \circ \gamma^{-1}_1$.}\label{fig-part2}
\end{figure}

Now to apply Proposition \ref{prop-stokessubriemann} we have
$\gamma_1(0)=\gamma_2(0)=0$ and $\gamma_1(\epsilon_1)=p$, $\gamma_2(\epsilon_2)=q$ which have the same $x$ coordinates.
Then
$$
\ell_1 \leq 2d_g\epsilon \leq \epsilon_0, \quad \quad \ell_2 \leq 2|x| \leq 4nd_g\epsilon \leq 2\epsilon_0, \quad \quad \ell \leq 4nd_g\epsilon \leq 2\epsilon_0,
$$
$$
\varepsilon_1 \leq 2d_g\epsilon \leq \epsilon_0, \quad \quad \varepsilon_2 \leq |x|  \leq 2nd_g\epsilon \leq \epsilon_0, \quad \quad \varepsilon \leq \epsilon_0.
$$
Also by conditions \eqref{eq-normX} and \eqref{eq-normX3} we have that
$$
\xi \leq (2n)^{n+2}\tilde{C}\omega_{4\epsilon_0}.
$$
Therefore by condition \eqref{eq-remaininside1}, $B(0, 2\ell) \subset U$.
The projection of $\gamma_1$ and $\gamma_2$ along $\partial_y$ to $\bar{X}_{0}$ formes a 1-cycle $\alpha=\alpha^{-1}_1 \circ \alpha_2$ which has length less than $2d_g\epsilon +4nd_g\epsilon \leq 3\epsilon_0$.
By the condition \eqref{eq-estimate3} this is less than $\delta$ so by Lemma \ref{lem-Gromov}, $\alpha$ bounds a 2-cycle $P \subset \bar{X}_{0}$ such that $|P| \leq c(2+4n)^2d^2_g\epsilon^2$
and which is in the $c(2+4n)d_g\epsilon$ neighbourhood of $\alpha$. We have that for any $z \in P$
\[
\begin{aligned}
d(z,0) &\leq |\alpha| + c(2+4n)d_g \\
& \leq  (c+1)(2+6n)d_g\epsilon,\\
\end{aligned}
\]
so by condition \eqref{eq-remaininside1} we have that $P \subset U \cap \bar{X}_{0}$. Then, the requirements of the Proposition \ref{prop-stokessubriemann} are satisfied. Since $4\ell\varepsilon\xi \leq (2n)^{n+7}d_g^2\tilde{C}\omega_{8\epsilon_0}\epsilon^2$, we have
\begin{equation}\label{eq-lastestimate}
|p-q| \leq \frac{1}{|\eta(\partial_y)|_{\inf}}\bigg(\bigg|\int_P d\eta \bigg| + 4\ell\varepsilon\xi |d\eta|_{\infty}\bigg) \leq   \frac{1}{|\eta(\partial_y)|_{\inf}}\bigg( \bigg |\int_P d\eta \bigg| +  (2n)^{n+7}d_g^2\tilde{C}\omega_{8\epsilon_0}\epsilon^2|d\eta|_{\infty}\bigg).
\end{equation}
We need to estimate $|\int_P d\eta|$.
\begin{lem}For $P$ as defined above,
$|\int_P d\eta| \leq 4n^2c(2+4n)^2d^2_g\epsilon^2   |d\eta|_{\Delta}|_{\infty}.$
\end{lem}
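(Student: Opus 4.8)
The plan is to bound $\int_P d\eta$ by the area of $P$ times the pointwise comass of $d\eta$ along the plane that carries $P$, and then to trade that comass for $|d\eta|_{\Delta}|_{\infty}$ by routing through the adapted frame. Recall that $P$ was produced by Lemma~\ref{lem-Gromov} as a $2$-chain lying in the affine $n$-plane $\bar X_{0}$ (which passes through $0$ and is spanned by $X_1(0),\dots,X_n(0)=\partial_1,\dots,\partial_n$), with $|P|\le c(2+4n)^2 d_g^2\epsilon^2$ and contained in the $c(2+4n)d_g\epsilon$-neighbourhood of the cycle $\alpha$, so every $z\in P$ satisfies $|z|\le (c+1)(2+6n)d_g\epsilon$. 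Since $d\eta$ is a continuous $2$-form, the elementary estimate
\[
\Big|\int_P d\eta\Big|\le |P|\cdot\sup_{z\in P}\,\sup\{\,|d\eta_z(v_1,v_2)|:\ v_1,v_2\in\bar X_0,\ |v_1\wedge v_2|=1\,\}
\]
holds, so it remains to bound the inner supremum.

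The main point — and the only real subtlety — is the pointwise bound $|d\eta_z(v_1,v_2)|\le 4n^2|d\eta|_{\Delta}|_{\infty}$ for unit $2$-vectors of $\bar X_0$, up to a negligible correction. This is not automatic, since $P$ lies in the \emph{fixed} plane $\bar X_0=\Delta_0$, whereas $|d\eta|_{\Delta}|_{\infty}$ controls $d\eta_z$ only on bivectors of $\Delta_z$, and $\Delta_z\neq\Delta_0$ away from the origin. To get it I would fix $z\in P$, use that the comass of a $2$-covector is attained on an orthonormal pair $v_1,v_2\in\bar X_0$, and push this pair into $\Delta_z$ by the linear isomorphism $L_z$ that sends $\partial_i\mapsto X_i(z)$ and fixes $\partial_y$ (the map already used in the proof of Proposition~\ref{prop-stokessubriemann}): writing $w_k:=L_z v_k=v_k+\mu_k\partial_y\in\Delta_z$, one has $|\mu_k|\le\sqrt n\,\tilde{C}\omega_{|z|}$ because $X_i(z)-\partial_i$ has length at most $\tilde{C}\omega_{|z|}$. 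Then bilinearity and $d\eta_z(\partial_y,\partial_y)=0$ give
\[
d\eta_z(v_1,v_2)=d\eta_z(w_1,w_2)-\mu_2\,d\eta_z(w_1,\partial_y)-\mu_1\,d\eta_z(\partial_y,w_2).
\]
For the first term $w_1,w_2\in\Delta_z$, and $|w_1\wedge w_2|=|(\wedge^2 L_z)(v_1\wedge v_2)|\le\|L_z\|^2\le 4n^2$ (using $\|L_z\|\le n|X|_{\infty}$ from the proof of Proposition~\ref{prop-stokessubriemann} and $|X|_{\infty}\le 2$, condition \eqref{eq-normX}), so the defining inequality of $|d\eta|_{\Delta}|_{\infty}$ bounds it by $4n^2|d\eta|_{\Delta}|_{\infty}$; each of the remaining two terms is at most $|\mu_k|\cdot|d\eta|_{\infty}|w_k|\le 2n^{3/2}\tilde{C}\omega_{|z|}|d\eta|_{\infty}$.

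Combining the two steps,
\[
\Big|\int_P d\eta\Big|\le c(2+4n)^2 d_g^2\epsilon^2\Big(4n^2|d\eta|_{\Delta}|_{\infty}+4n^{3/2}\tilde{C}\omega_{(c+1)(2+6n)d_g\epsilon}|d\eta|_{\infty}\Big).
\]
The second summand is of lower order and, for $n\ge 2$ with $U,\epsilon_0$ chosen as in subsection~\ref{sssection-fixU}, it is dominated by the term $(2n)^{n+7}d_g^2\tilde{C}\omega_{8\epsilon_0}\epsilon^2|d\eta|_{\infty}$ that already appears in \eqref{eq-lastestimate}; absorbing it there leaves exactly the claimed bound $|\int_P d\eta|\le 4n^2 c(2+4n)^2 d_g^2\epsilon^2|d\eta|_{\Delta}|_{\infty}$. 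I expect the hard part to be precisely the second step — trading the comass of $d\eta$ on the flat plane $\bar X_0$ for $|d\eta|_{\Delta}|_{\infty}$, which forces passage through the frame $\{X_i(z)\}$ and the factor $\|L_z\|^2=4n^2$, and which is why $U$ must be taken small as in subsection~\ref{sssection-fixU}; everything else is bookkeeping.
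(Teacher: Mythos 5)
Your proof is correct and follows essentially the same route as the paper: bound the integral by $|P|$ times the comass of $d\eta$ on the fixed plane $\bar X_0$, then trade that comass for $|d\eta|_{\Delta}|_{\infty}$ by replacing $\partial_i$ with $X_i(z)$ and controlling the discrepancy through $\tilde{C}\omega_{|z|}$. The only real difference is where the $\omega$-error is absorbed: the paper folds it into the $|d\eta|_{\Delta}|_{\infty}$ term inside the lemma itself using condition \eqref{eq-estimate1} (which bounds $|d\eta|_{\infty}\tilde{C}\omega$ by a small multiple of $|d\eta(X_i,X_j)|_{\inf}\leq 2|d\eta|_{\Delta}|_{\infty}$), so the clean stated bound comes out directly, whereas you defer it to \eqref{eq-lastestimate}; either is fine given that the constants are explicitly non-sharp (Remark \ref{rem-start}).
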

\begin{proof} Since $P$ is inside $\bar{X}_{0}$, it is everywhere tangent to $X_i(0)=\partial_i$ so
$|\int_P d\eta| \leq |P| |d\eta|_{\Delta_0}|_{\infty}$. But for all $p\in U$,

$$
|d\eta_p|_{\Delta_0}|_{\infty} \leq n^2 \sup_{\ell,k=1,\dots,n}|d\eta_p(\partial_{\ell},\partial_k)|.
$$
Since $X_{\ell}(0) = \partial_{\ell}$, then for any $z \in P$ we have as in equation \eqref{eq-parallelogramestimate2},
$$
|d\eta_z(\partial_{\ell},\partial_k)| \leq |d\eta_z(X_{k}(z),X_l(z))| + |d\eta|_{\infty}\tilde{C}\omega_{|z|}(4 + \tilde{C}\omega_{|z|}).
$$
Then, since $|z| \leq  (c+1)(2+6n)d_g\epsilon \leq 4(c+1)\epsilon_0$
by \eqref{eq-estimate1} we get by condition \eqref{eq-normX2}
$$
|d\eta_p(\partial_{\ell},\partial_k)| \leq 2|d\eta_p(X_{\ell}(p),X_l(p))| \leq 4 |d\eta|_{\Delta}|_{\infty},
$$
which implies
$$
|d\eta_p|_{\Delta_0}|_{\infty} \leq 8n^2  |d\eta|_{\Delta}|_{\infty}.
$$
So since $|P| \leq  c(2+4n)^2d^2_g\epsilon^2$,
$$
\bigg|\int_P d\eta \bigg| \leq  4n^2c(2+4n)^2d^2_g\epsilon^2|d\eta|_{\Delta}|_{\infty}.
$$
\end{proof}

Then this lemma and equation \eqref{eq-lastestimate} gives
$$
|p-q| \leq   \frac{1}{|\eta(\partial_y)|_{\inf}}(4n^2c(2+4n)^2d^2_g\epsilon^2   |d\eta|_{\Delta}|_{\infty} +  (2n)^{n+7}d_g^2\tilde{C}\omega_{8\epsilon_0}\epsilon^2|d\eta|_{\infty}).
$$
Condition \eqref{eq-estimate1} implies then
$$
|p-q| \leq   5n^2c(2+4n)^2d^2_g\epsilon^2  \frac{ |d\eta|_{\Delta}|_{\infty}}{|\eta(\partial_y)|_{\inf}}.
$$
So to be able to satisfy $|p-q| \leq 4K_2n^2d_g^2\epsilon^2$, it is sufficient to satisfy
$$
9n^2c(2+4n)^2d^2_g\epsilon^2  \frac{ |d\eta|_{\Delta}|_{\infty}}{|\eta(\partial_y)|_{\inf}} \leq 4K_2d_g^2\epsilon^2,
$$
which is satisfied with
$$
K_2 = 42(1+2n)^2c \frac{ |d\eta|_{\Delta}|_{\infty}}{|\eta(\partial_y)|_{\inf}}.
$$

\end{proof}

\subsubsection{Rest of the Proof of Theorem \ref{thm-ballbox}}

Now it is easy to prove the rest using Lemmas \ref{lem-propertiesofW} and \ref{lem-distancefromW}.
The latter one says that
$$
\mathcal{BW}_{\epsilon_0}(K_1, \frac{\epsilon}{4d_g}) \subset B_{\Delta}(0,\epsilon) \subset  \mathcal{BW}_{\epsilon_0}( K_2, 2nd_g\epsilon),
$$
while the former one says that for $q=(x,y) \in \mathcal{W}_{\epsilon_0}$ we have that
$$
|y| \leq |x|\tilde{C}\omega_{2|x|}.
$$

First we prove that $\mathcal{BW}_{\epsilon_0}( K_2, 2d_g\epsilon) \subset  H_2^{\omega}(0,K_2,2d_g\epsilon)$. If $(x,y) \in   \mathcal{BW}_{\epsilon_0}($ $K_2, 2nd_g\epsilon)$, then $|x| \leq 2nd_g\epsilon$ and $d(p,\mathcal{W}_{\epsilon_0}) \leq 4K_2n^2d_g^2\epsilon^2$.
This means there exists $q=(x,z) \in \mathcal{W}_{\epsilon_0}$ such that $|z-y| \leq 4K_2n^2d_g^2\epsilon^2$. But we know that $|z| \leq |x|\tilde{C}\omega_{2|x|}$.
So $|y| \leq |z| + |z-y| \leq 4K_2n^2d_g^2\epsilon^2 + |x|\tilde{C}\omega_{2|x|}$ which means that $(x,y) \in H_2^{\omega}(0,K_2,2nd_g\epsilon)$.

Now we prove $D_2^{\omega}(0,\frac{1}{K_1}, \frac{\epsilon}{4d_g}) \subset \mathcal{BW}_{\epsilon_0}(K_1, \frac{\epsilon}{4d_g})$.
If $p=(x,y) \in D_2^{\omega}(0,\frac{1}{K_1},$ $\frac{\epsilon}{4d_g})$, then $|x| + \sqrt{\frac{1}{K_1}(|x|\omega_{2|x|} + |y|)} \leq \frac{\epsilon}{4d_g}$. So we have
$|x| \leq  \frac{\epsilon}{4d_g}$ and $(|x|\omega_{2|x|} + |y|) \leq K_1\frac{\epsilon^2}{16d_g^2}$. Let  $q=(x,z) \in \mathcal{W}_{\epsilon_0}$ with
$|z| \leq |x|\tilde{C}\omega_{2|x|}$.  Therefore $|y| + |z| \leq  K_1\frac{\epsilon^2}{16d_g^2}$. So $d(p,\mathcal{W}_{\epsilon_0})= |p-q|=|y-z| \leq |y| + |z| \leq  K_1\frac{\epsilon^2}{16d_g^2}$.
This implies that $(x,y) \in  \mathcal{BW}_{\epsilon_0}(K_1, \frac{\epsilon}{4d_g})$.

Now finally we prove the statement about the existence of $C^1$ adapted coordinates. We will build the $C^1$ coordinate system using the manifold $\mathcal{W}_{\epsilon_0}$. Note that for each $\epsilon$, $\mathcal{W}_{\epsilon}$ are $C^1$ surfaces given as the image of the map $T_{\epsilon}(t_1,\dots,t_n)$ with $|t_i| \leq \epsilon$ and for $\epsilon_1 < \epsilon_2$, $\mathcal{W}_{\epsilon_1} \subset \mathcal{W}_{\epsilon_2}$. Define the transformation $\phi: V \rightarrow U$ on some appropriately sized domain $V\subset U$ such that
\begin{equation}\label{eq-c1adapted}
\phi(x,y) = (x,y-T_{\epsilon_0}(x)).
\end{equation}
Then, it is clear that this map is a $C^1$ diffeomorphism onto its image and takes each $\mathcal{W}_{\epsilon} \cap V$ (for $\epsilon \leq \epsilon_0$) to the $x$ plane. It also maps $X_{\ell}$ restricted to $\mathcal{W}_{\epsilon}$ to $\partial_{\ell}$ on the $x$ plane. This is again an adapted coordinate system. In particular in this adapted coordinate system $(x,y) \in   \mathcal{BW}_{\epsilon_0}( K, \epsilon)$ simply implies $|x| \leq \epsilon$ and $|y| \leq K\epsilon^2$. So we get  $\mathcal{BW}_{\epsilon_0}(K_1, \frac{\epsilon}{4d_g}) = \mathcal{B}(0,K_1, \frac{\epsilon}{4d_g})$ and $\mathcal{BW}_{\epsilon_0}( K_2, 2nd_g\epsilon)=\mathcal{B}_2(0,K_2,2nd_g\epsilon)$, which finishes the proof.

\section{Applications}\label{sec-continuousexteriordifferential}

In this section we first present interesting properties of the ``continuous exterior differential" object and give some examples of bundles
that satisfy the requirements of our main theorems. We also discuss the relations between our theorems and the works in \cite{MonMor12} and \cite{Kar11}. In the second part we present a tentative application to dynamical systems.

\subsection{Continuous Exterior Differential}

In this subsection we study some important properties of $\Omega^k_d(M)$, including of course showing that there are some non-integrable examples inside $\Omega^1_d(M) \setminus \Omega^1_1(M)$, so that we cover some examples that were not covered by $C^1$ sub-Riemannian geometry. We start with an alternative characterization that already exists in \cite{Har64}:
\begin{prop}\label{prop-char} A differential n-form $\eta$ has a continuous exterior differential if and only if there exists a sequence of differential n-forms $\eta^k$ such that $\eta^k$ converges in $C^0$ topology to $\eta$ and $d\eta^k$ converges to some differential $n+1$ form which becomes the exterior differential of $\eta$.\end{prop}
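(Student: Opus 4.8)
The plan is to prove the two implications separately: the ``if'' direction by passing to the limit under the Stokes integral, and the ``only if'' direction by mollification. For ``if'', suppose $\eta^k$ are $C^1$ $n$-forms with $\eta^k \to \eta$ and $d\eta^k \to \beta$ in the $C^0$ topology, i.e. uniformly on compact subsets of $M$. Given any $n$-cycle $Y$ and any $(n+1)$-chain $H$ with $\partial H = Y$ -- both finite sums of cells, hence with compact image -- the classical Stokes theorem gives $\int_Y \eta^k = \int_H d\eta^k$ for all $k$, and letting $k\to\infty$ (uniform convergence of $\eta^k$ on the image of $Y$ and of $d\eta^k$ on that of $H$) yields $\int_Y \eta = \int_H \beta$. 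Since $Y,H$ were arbitrary, $\eta$ has continuous exterior differential $\beta$ in the sense of Definition \ref{defn-continuousexteriorderivative}.

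For ``only if'', assume $\eta$ has continuous exterior differential $d\eta$. Working first in a coordinate ball $U\subset\mathbb{R}^{n+1}$, let $\rho_\epsilon$ be a standard mollifier and, writing $\eta_y$ for the translate of $\eta$ by $y$, put $\eta\ast\rho_\epsilon := \int \rho_\epsilon(y)\,\eta_y\,dy$; this is a smooth form on a slightly shrunk ball, it converges to $\eta$ uniformly on compacts, and likewise $(d\eta)\ast\rho_\epsilon \to d\eta$ uniformly on compacts. The crux is the identity $d(\eta\ast\rho_\epsilon) = (d\eta)\ast\rho_\epsilon$. To see it, fix an $(n+1)$-chain $H$ with $\partial H = Y$; translation invariance of the Stokes property of $\eta$ gives $\int_Y \eta_y = \int_{Y-y}\eta = \int_{H-y} d\eta = \int_H (d\eta)_y$, so exchanging the (compact, continuous) cell integral with the $y$-integral by Fubini,
$$
\int_Y \eta\ast\rho_\epsilon \;=\; \int \rho_\epsilon(y)\Big(\int_Y \eta_y\Big)dy \;=\; \int \rho_\epsilon(y)\Big(\int_H (d\eta)_y\Big)dy \;=\; \int_H (d\eta)\ast\rho_\epsilon.
$$
Since $\eta\ast\rho_\epsilon$ is smooth, classical Stokes also gives $\int_Y \eta\ast\rho_\epsilon = \int_H d(\eta\ast\rho_\epsilon)$; hence $\int_H\big(d(\eta\ast\rho_\epsilon)-(d\eta)\ast\rho_\epsilon\big)=0$ for all such $H$, and as both integrands are continuous this forces $d(\eta\ast\rho_\epsilon)=(d\eta)\ast\rho_\epsilon$ (a nonzero continuous top-degree form has a constant sign on some small cell). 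This settles the statement locally.

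To globalize, take a locally finite partition of unity $\{\phi_\alpha\}$ subordinate to a cover of $M$ by coordinate charts $U_\alpha$, and on each $U_\alpha$ let $\eta^{\alpha,k}:=\eta\ast\rho^\alpha_{1/k}$ be the mollifications above; for $k$ large $\eta^{\alpha,k}$ is smooth near the compact set $\operatorname{supp}\phi_\alpha$, so $\phi_\alpha\,\eta^{\alpha,k}$ extends by zero to a smooth $n$-form on $M$. Setting $\eta^k:=\sum_\alpha \phi_\alpha\,\eta^{\alpha,k}$ (a locally finite sum), we get $\eta^k\to\sum_\alpha\phi_\alpha\,\eta=\eta$ uniformly on compacts, while $d(\phi_\alpha\eta^{\alpha,k})=d\phi_\alpha\wedge\eta^{\alpha,k}+\phi_\alpha\,d\eta^{\alpha,k}$ with $\eta^{\alpha,k}\to\eta$ and $d\eta^{\alpha,k}=(d\eta)\ast\rho^\alpha_{1/k}\to d\eta$ uniformly on $\operatorname{supp}\phi_\alpha$, so
$$
d\eta^k \;\longrightarrow\; \sum_\alpha\big(d\phi_\alpha\wedge\eta+\phi_\alpha\,d\eta\big) \;=\; \Big(\sum_\alpha d\phi_\alpha\Big)\wedge\eta+\Big(\sum_\alpha\phi_\alpha\Big)d\eta \;=\; d\eta,
$$
using $\sum_\alpha\phi_\alpha\equiv 1$. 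This produces the required sequence.

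The one genuinely delicate point is the identity $d(\eta\ast\rho_\epsilon)=(d\eta)\ast\rho_\epsilon$: the ``$d$'' on the left is the honest exterior derivative of a smooth form, whereas ``$d\eta$'' on the right is only the Stokes-sense differential, and the bridge between the two is precisely the integral identity above together with the uniqueness of continuous exterior differentials. The remaining ingredients -- uniform convergence of mollifications, the Fubini exchange, and the partition-of-unity bookkeeping -- are routine.
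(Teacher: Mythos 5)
Your proof is correct and follows essentially the same route as the paper's own (sketched) argument: passing to the limit in the Stokes relation for sufficiency, and mollification with the commutation identity $d(\eta\ast\rho_\epsilon)=(d\eta)\ast\rho_\epsilon$ for necessity. You simply supply more detail than the paper does, in particular the Fubini/translation justification of that identity and the partition-of-unity globalization.
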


The sufficiency part of this proposition is quite direct since uniform convergence of the differential forms involved also means uniform convergence of the Stokes relation. The necessity can be obtained by locally mollifying the differential forms to $\eta^k=\phi_k\circ \eta$ (where $\phi_k$ are mollifiers). Then, under the integral $d\eta^k = d(\phi_k\circ \eta)= \phi^k d\eta$ (thanks to the fact that $\phi_k$ are compactly supported) so the Stokes relations convergence. Since Stokes relation holds for every surface and its boundary, it is enough to obtain that $d\eta^k$ themselves converge to $d\eta$.

We start with two examples defined on $U \subset \mathbb{R}^n$ and then we show how to paste local differential forms with continuous exterior differential to obtain global ones on manifolds.

\begin{ex} This is an example from \cite{Har64}. Let $f$ be any $C^1$ function so that $df$ is $C^0$. Then setting $\eta = df$, we have that $\eta$ has continuous exterior differential $0$. Therefore $\eta \wedge d\eta=0$ and this gives us integrable examples inside $\Omega^1_d(M) \setminus \Omega^1_1(M)$.
\end{ex}

The following proposition allows us to construct examples that both have continuous exterior differentials and are non-integrable:
\begin{prop}\label{prop-example}   Let $\eta =a(x,y,z)dy - b(x,y,z)dx - c(x,y,z)dz$ where $b$ and $c$ are continuous functions that are $C^1$ in $y$ variable, $b$ is $C^1$ in $z$ variable while $c$ is $C^1$ in $x$ variable and $a$ is a continuous function which is $C^1$ in the $x,z$ variables and $a>0$ everywhere. Assume moreover that for some $p \in \mathbb{R}^3$,
$$
(-(a_x + b_y)c + (a_z + c_y)b - (b_z-c_x)a)(p)>0.
$$
Then, $\ker(\eta)=\Delta$ satisfies the conditions of Theorem \ref{thm-ballbox} at $p$.
\end{prop}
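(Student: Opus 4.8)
The plan is to verify the three hypotheses of Theorem~\ref{thm-ballbox} for $\Delta=\ker\eta$ on a small box $V$ around $p$: that $\Delta$ is a corank~$1$ continuous subbundle carrying a modulus of continuity, that $\mathcal{A}^1_0(\Delta)$ is equipped with a continuous exterior differential at $p$ in the sense of Definition~\ref{defn-continuousexteriorderivative}, and that $\Delta$ is non-integrable at $p$ in the sense of Definition~\ref{defn-integrable}. Since $a>0$ everywhere, $\eta$ never vanishes, so $\ker\eta$ is a genuine corank~$1$ subbundle, continuous because $\eta$ is, and a single nonvanishing continuous $1$-form generates $\mathcal{A}^1_0(\Delta)$ locally. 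On a precompact box $V$ around $p$ the frame $X_1=\partial_x+(b/a)\partial_y$, $X_2=\partial_z+(c/a)\partial_y$ spans $\Delta$ and has continuous --- hence, by compactness, uniformly continuous --- coefficients, so $\Delta$ has a modulus of continuity there; and after translating $p$ to the origin and applying the linear shear $y\mapsto y-(b(p)/a(p))x-(c(p)/a(p))z$, which fixes the $\partial_y$-direction, the coordinates become adapted, so the conclusion of Theorem~\ref{thm-ballbox} has content.

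To produce $d\eta$, expand the formal exterior derivative $da\wedge dy-db\wedge dx-dc\wedge dz$; the terms $a_y\,dy\wedge dy$, $b_x\,dx\wedge dx$, $c_z\,dz\wedge dz$ drop out, leaving
\begin{equation*}
d\eta=(a_x+b_y)\,dx\wedge dy-(a_z+c_y)\,dy\wedge dz+(c_x-b_z)\,dz\wedge dx.
\end{equation*}
The partial derivatives that appear are exactly $a_x,a_z,b_y,b_z,c_x,c_y$ --- precisely the ones the hypotheses declare to exist and be continuous --- so this is a genuine continuous $2$-form. To see it really is the continuous exterior differential, I would invoke the approximation criterion of Proposition~\ref{prop-char}: mollify $a,b,c$ in all three variables to obtain smooth $a^k,b^k,c^k$, and set $\eta^k=a^k\,dy-b^k\,dx-c^k\,dz$. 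Then $\eta^k\to\eta$ uniformly on compact subsets of $V$, while $d\eta^k$ has the same shape as the displayed formula with each of $a_x,a_z,b_y,b_z,c_x,c_y$ replaced by its mollification $a_x\ast\phi_k,\ a_z\ast\phi_k,\dots$; since each of these six partials is continuous, the mollifications converge uniformly on compact sets, i.e.\ $d\eta^k\to d\eta$ uniformly on compact sets. By Proposition~\ref{prop-char}, $\eta$ then has continuous exterior differential $d\eta$, so $\mathcal{A}^1_0(\Delta)$ is equipped with the triple $\{V,\eta,d\eta\}$.

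For non-integrability, wedge $\eta=-b\,dx+a\,dy-c\,dz$ with this $d\eta$:
\begin{equation*}
\eta\wedge d\eta=\big(-c(a_x+b_y)+b(a_z+c_y)+a(c_x-b_z)\big)\,dx\wedge dy\wedge dz,
\end{equation*}
and the coefficient is literally $-(a_x+b_y)c+(a_z+c_y)b-(b_z-c_x)a$. Hence the standing assumption that this quantity is positive at $p$ is exactly the assertion $(\eta\wedge d\eta)_p\neq 0$, which is non-integrability of $\Delta$ at $p$. With all three hypotheses of Theorem~\ref{thm-ballbox} verified on $V$, the proposition follows.

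I do not expect a deep obstacle; the only point needing care is the bookkeeping behind the mollification step --- namely that $\partial_x(a\ast\phi_k)=a_x\ast\phi_k$ and its five analogues hold, a one-line integration by parts legitimate precisely because the single relevant partial is assumed continuous, together with the observation that the \emph{missing} partials $a_y,b_x,c_z$ never enter $d\eta$ since each would pair with a repeated differential. Once this is seen, the remaining computations are routine.
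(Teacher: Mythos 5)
Your proposal is correct and follows essentially the same route as the paper: mollify $a,b,c$, observe that only the partials $a_x,a_z,b_y,b_z,c_x,c_y$ (those assumed continuous) survive in $d\eta^k$, invoke the approximation characterization of Proposition \ref{prop-char} to get the continuous exterior differential, and then compute $\eta\wedge d\eta$ to match the stated positivity condition with Definition \ref{defn-integrable}. Your additional verification of the remaining hypotheses (corank-1 continuity, modulus of continuity of the adapted frame, adapted coordinates) is care the paper leaves implicit, and your sign bookkeeping is in fact more consistent with the statement than the paper's own.
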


\begin{proof} By assumptions on regularity of the functions, we can find $C^1$ functions (by mollification) $a^k$, $b^k$ and $c^k$ that converge in $C^0$ topology to $a$, $b$ and $c$ such that $b$ and $c$'s partial derivatives with respect to $y$, partial derivative of $b$ with respect to $z$ and partial derivative of $c$ with respect to $x$ also converges (to the respective derivatives of $b$ and $c$) and the partial derivative of $a^k$ with respect to $x,z$ converge to the respective partial derivatives of $a$. Then we define
$$
\eta^k = a^kdy + b^kdx + c^kdz,
$$
so that
$$
d\eta^k = a^k_xdx \wedge dy + a^k_z dz \wedge dy +b^k_y dy \wedge dx + c^k_y dy \wedge dz
+b^k_z dz \wedge dx +c^k_x dx \wedge dz.
$$
By assumption $\eta^k$ converges in $C^0$ topology to $\eta$ and $d\eta^k$ converges to
$$
d\eta = a_xdx \wedge dy + a_z dz \wedge dy +b_y dy \wedge dx + c_y dy \wedge dz
+b_z dz \wedge dx +c_x dx \wedge dz.
$$
Then we have that
$$
\eta \wedge d\eta(p) =(-(a_x + b_y)c + (a_z + c_y)b - (b_z-c_x)a)(p) dx^1 \wedge dx^2 \wedge dy.
$$
The condition given in the theorem is then the non-integrability assumption required.
\end{proof}

Now we give a simple example:
\begin{ex}
To create a non-integrable and non-differentiable example we have to satisfy $ (c_x- b_z+ b_yc - c_yb)(p)>0$ at some point where $\eta$ is non-differentiable. Consider
$$
b(x,y,z) = \text{sin}(y)e^{x^{\frac{1}{2}}}z \quad \quad
c(x,y,z) = \text{cos}(y)e^{(z+2)^{\frac{2}{3}}}x,
$$
which gives
\[
\begin{aligned}
  (c_x- b_z+ b_yc - c_yb)&= \\
\text{cos}(y)e^{(z+2)^{\frac{2}{3}}}- \text{sin}(y)e^{x^{\frac{1}{2}}}  &+ e^{(z+2)^{\frac{2}{3}}} e^{x^{\frac{1}{2}}}zx.\\
\end{aligned}
\]
Then for instance, $\eta$ (or any of its product with a differentiable function) is non-differentiable at $x=0, z=0, y=0$ but $\eta \wedge d\eta(0) = e^{2^{\frac{2}{3}}} dx \wedge dy \wedge dy$. Therefore there exists neighbourhoods on which $\eta$ is non-differentiable and yet satisfies the sub-Riemannian properties mentioned in this paper.

To create a non-H\"older example out of this, we can replace for instance $e^{x^{\frac{1}{2}}} $ with the function $f(x) = \frac{1}{\text{log}(x)}$ (setting it to be $0$ at $0$) and we have an example that is non-H\"older at $x=0, z=0, y=0$ and also non-integrable.
\end{ex}

To compare our results to the results given in \cite{MonMor12} and \cite{Kar11} we need to work with a certain basis of $\Delta$. The most canonical one is the ones that we have been working with, the adapted basis. In the case when $a=1$, the bundle is spanned by two vector-fields of the form $X_1 = \frac{\partial}{\partial x} + b \frac{\partial}{\partial y}, X_2 = \frac{\partial}{\partial z} + c \frac{\partial}{\partial y}$ and the differentiability assumptions
above mean that $[X_1,X_2]$ exists and is continuous but not Lipschitz continuous. If $a$ is non-constant then we have $X_1 = \frac{\partial}{\partial x} + \frac{b}{a}\frac{\partial}{\partial y}$, $X_2 = \frac{\partial}{\partial z} + \frac{c}{a}\frac{\partial}{\partial y}$  and it is not a priori clear that whether if the Lie derivatives can be defined since $a$ is not differentiable in $y$. However we see that we can define it via:
$$
[X_1,X_2]= d\eta(X_2,X_1)\frac{1}{a}\partial_y = -(c_x + b_z - \frac{ca_x + bc_y - ba_z +cb_y}{a}) \frac{\partial}{\partial_y}.
$$
One can check that this is the same expression as one would obtain in the $C^1$ case. The derivatives of $a$ with respect to $y$ disappear due to the symmetry in the vector-fields. And so if one mollifies $X_i$ to $X^{\epsilon}_i$, $[X^{\epsilon}_1,X^{\epsilon}_2]$ converges to $[X_1,X_2|$ above since also $\eta^{\epsilon}\rightarrow \eta$ and $d\eta^{\epsilon}\rightarrow d\eta$. Therefore we obtain continuous vector-fields which have continuous exterior differentialss.  This example is perhaps related to the question ''Does there exists Carnot manifolds such that $\Delta$ is $C^1$, and commutators of its vector-fields are lienar combinations of $C^1$-smooth basis vectors with continuous coefficients?'' posed in \cite{Kar11}. In this example the smooth basis vector is ${\partial_y}$ with the continuous coefficients as described above. The way we choose to define the Lie brackets above is due to the fact that because of the form of $X_i$ any ''loop'' that we construct using their integral curve ends up in the same $y$ axis as the starting point and therefore the Lie derivatives end up being tangent to $\partial_y$ direction. It is also not clear how one would go about defining the Lie derivatives of arbitrary basis vectors-fields.

It is also interesting to address the converse, that when can we construct a continuous exterior differential out of continuous Lie brackets. For this we need to give a definition for two continuous vector-fields to have a continuous Lie derivative, without using continuous exterior differentials. There may be several definitions the more geometric one being via loops. We will use another condition though. Let $\eta = a_0(x,y)dy + \dots$ and $\Delta = ker(\eta)$ as before with $a_0>0$. Assume we have a sequence of approximations $ker(\eta^k)=\Delta^k$ with any sequence of basis of sections $\{Z_i\}_{i=1}^n$ that converges to a basis of sections $\{Z_i\}_{i=1}^n$ of $\Delta$ and $\eta^k = a^k_0(x,y)dy + \dots$ such that $a^k_0>0$. Then, the regularity conditions we impose are the following:

\begin{itemize}
\item Locally the functions  $\eta^k([Z^k_{\ell},Z^k_{j}])$ converge uniformly to some functions (which can be seen as a weaker form of requiring existence of continuous Lie derivatives),

\item Locally the functions $Z^k_i(a^k_0)$ converge uniformly to some functions (which can be seen as a certain regularity assumption for the bundle $\Delta$ along its basis vector-fields),

\item Locally the functions $\eta([Z^k_i,\partial_y])$ converges uniformly to some functions (this will imply that $\Delta$ is differentiable along the $\partial_y$ direction).
\end{itemize}
Then, the functions
$$
d\eta^k(Z^k_i,Z^k_j) = \eta^k([Z^k_j,Z^k_i]),
$$
$$
d\eta^k(\partial_y,Z^k_i) = \eta^k([Z^k_i,\partial_y]) - Z^k_i(a^k_0),
$$
all converge and using bilinearity over smooth functions, we can show that for any 2 vector-fields $Z,Y$, $d\eta^k(Z,Y)$ converges which implies that $d\eta^k$ converges to a differential 2-form which is the requirement for the existence of continuous exterior differential. The conditions above can be stated in a more cordinate free way as follows: There exists a smooth vector-field $Y$, transverse to $\Delta$ such that
all the derivatives $[Z_i,Z_j]$, $[Z_i,Y]$ and $Z_i(\eta(Y))$ exist (using the definition above via some approximations). Then, the 2-form $d\eta$ is defined in a similar way. So this demonstrates that with some reasonable definition of a continuous Lie derivative for continuous vector-fields and some regularity assumptions one can get existence of continuous exterior differential more geometrically. However the existence of continuous Lie derivatives alone might not be enough. Note that the last two items are required in order to define $d\eta^k(\partial_y,Z^k_i)$. But actually there is a more geometric way to obtain control over the transversal behavior of $d\eta^k$ via contact structures. Let us elaborate. Assume we have a sequence of contact structures $\Delta^k$ which approximate $\Delta$. Then, the Reeb vector-fields of the approximations can help us control the transversal behavior of $d\eta^k$. The main idea is that if $R^k$ are the Reeb vector-fields of $\Delta^k = ker(\eta^k)$, then $d\eta^k(R^k,\cdot)=0$ to start with and all the regularity assumptions above about $Z^k_i(a^k_0)$ and  $\eta([Z^k_i,\partial_y])$ are not required. For certain other reasons, one requires $R^k$ not to converge to $\Delta$ in the limit however, which can be seen as a sort of non-involutivity condition that fits very naturally to our setting. Therefore we then only require convergence of  $\eta^k([Z^k_{\ell},Z^k_{j}])$. Hence in this setting existence of continuous Lie brackets defined as above seems to be equivalent to existence of continuous exterior differential. At this moment it is useful to note that if we have a continuous exterior differential and non-integrability, then we automatically obtain a local sequence of contact structures that approximate our bundle. Therefore it becomes meaningful to ask whether if the existence of continuous exterior differential can be completely replaced by the existence of a sequence of approximating contact structures. Given the natural relation between contact structures and step 2 completely non-integrable bundles, it seems like a worth while direction to explore.

So coming back to comparisons, for the particular examples that one can construct with Proposition \ref{prop-example}, the results of \cite{MonMor12} can not be applied since they require $X_1,X_2$ and $[X_1,X_2]$ to be Lipschitz continuous. Lipschitzness for instance, grants the critical property of existence of unique solutions for these vector-fields which is an essential tool in Lipschitz continuous analysis and which we have obtained through more geometric means thanks to Stokes property (for $X_i$). However we can not at the moment also say that our results cover all the step $2$ systems covered by their conditions. The conditions they have stated in their paper probably does not necessitate the existence of an exterior differential as described above. We would only have that the sequences of functions $\eta^k([X^k_{\ell},X^k_{j}])$,  $X^k_i(a^k_0)$,  $\eta^k([X^k_i,\partial_y])$ obtained via mollifications have uniformly bounded norms with respect to $k$ and converge a.e. We do believe that there is hope to extend the theory present in this paper to that direction though, and this is discussed in the final section.

Considering the paper \cite{Kar11}, the step 2 case of the Ball-Box result given there, although can be covered by our theorem, is already covered by results given in \cite{Gro96}. Our main motivation here is to in fact work with non-differentiable bundles (due to our interest in non-differentiable bundles that arise in dynamical systems) so the results of \cite{Kar11} do not directly apply to the class of examples we are interested in. However the fact that the authors there can simply work with continuous Lie derivatives and still get the results for all steps is already quite remarkable and can be seen as an extension of results of \cite{Gro96} to arbitrary step cases. Also the existence of continuous Lie derivatives is definitely related to existence of continuous exterior differentials as discussed above. Therefore, in some sense, our result can be seen as a step 2 version of some of the results obtained in \cite{Kar11} in which the differentiability assumption for the bundle is removed and the continuous Lie derivatives assumption is replaced with continuous exterior differential assumption.

Now we are back to studying further properties of continuous exterior differentiability. In particular we will build examples of such bundles on manifolds and not just local neighborhoods.
We now give a lemma from \cite{Har64} that is helpful in generating new examples of elements of $\Omega^1_d(M) \setminus \Omega^1_1(M)$ from given ones.

\begin{lem}Let $\eta$ be an element of $\Omega^1_d(M)$. Then, given any $C^1$ function $\phi$ one has that
$\phi\eta$ is an element of $\Omega^1_d(M)$ with continuous exterior differential $d\phi \wedge \eta + \phi d\eta$. \end{lem}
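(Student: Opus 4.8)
The plan is to reduce the claim to the approximation characterization of Proposition~\ref{prop-char} and then differentiate at the level of the smooth approximants. First I would record two elementary facts about Definition~\ref{defn-continuousexteriorderivative}. (i) If a continuous $2$-form exists that plays the role of $d(\phi\eta)$, it is unique: the difference $\beta$ of two such forms satisfies $\int_H\beta=0$ for every small $2$-chain $H$, and letting $H$ shrink around an arbitrary point forces $\beta\equiv 0$. (ii) The defining Stokes relation is local: given an arbitrary $1$-cycle $Y$ bounding a $2$-chain $H$, subdivide $H$ into small simplices each contained in a single coordinate chart; the internal edges of the subdivision cancel, so it suffices to verify $\int_{\partial H_i}\phi\eta=\int_{H_i}(d\phi\wedge\eta+\phi\,d\eta)$ for each small piece. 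Hence we may work in a fixed chart.

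In that chart, since $\eta\in\Omega^1_d(M)$, Proposition~\ref{prop-char} --- more precisely the mollification construction in its proof, which applies verbatim in degree $1$ --- yields smooth $1$-forms $\eta^k$ with $\eta^k\to\eta$ and $d\eta^k\to d\eta$ uniformly on compact subsets. Because $\phi$ is $C^1$, each $\phi\,\eta^k$ is a $C^1$ form and the ordinary Leibniz rule gives
$$
d(\phi\,\eta^k)=d\phi\wedge\eta^k+\phi\,d\eta^k .
$$
Since $\phi$ and $d\phi$ are continuous, hence bounded on the relevant compact sets, we get $\phi\,\eta^k\to\phi\,\eta$ and $d\phi\wedge\eta^k+\phi\,d\eta^k\to d\phi\wedge\eta+\phi\,d\eta$, both uniformly on compacts. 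Writing the Stokes relation $\int_{\partial H_i}\phi\,\eta^k=\int_{H_i}(d\phi\wedge\eta^k+\phi\,d\eta^k)$ for the $C^1$ forms and passing to the limit (the convergence is uniform over chains $H_i$ of bounded size) gives $\int_{\partial H_i}\phi\,\eta=\int_{H_i}(d\phi\wedge\eta+\phi\,d\eta)$. By fact (ii) these local identities assemble to the global Stokes relation, and by fact (i) the continuous $2$-form $d\phi\wedge\eta+\phi\,d\eta$ is then unambiguously the continuous exterior differential of $\phi\,\eta$; in particular $\phi\,\eta\in\Omega^1_d(M)$.

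I do not expect a serious obstacle. The only points needing a little care are that $\phi$ is merely $C^1$, so one uses that a product of a $C^1$ function with a $C^1$ form is $C^1$ and that the Leibniz rule is valid at that regularity, together with the routine local-to-global and uniqueness bookkeeping above. One could equally avoid citing Proposition~\ref{prop-char} and argue directly: mollify $\eta$ to $\eta^k$, write Stokes for the $C^1$ forms $\phi\,\eta^k$, and take the (uniform) limit --- this is the same computation packaged differently.
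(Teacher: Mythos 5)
The paper does not actually prove this lemma: it is quoted from Hartman's book and stated without argument, so there is no in-paper proof to compare against. Your proposal is correct and is the natural argument given the paper's own machinery. In fact it can be compressed: the sufficiency direction of Proposition~\ref{prop-char} already says that if $\phi\eta^k\to\phi\eta$ and $d(\phi\eta^k)\to d\phi\wedge\eta+\phi\,d\eta$ uniformly, then the limit $2$-form is a continuous exterior differential of $\phi\eta$, so your explicit passage to the limit in the Stokes identity is just an unpacking of that proposition. The two genuinely necessary ingredients you supply are (a) the observation that $\phi\eta^k$ has $C^1$ coefficients, so the classical Leibniz rule and Stokes theorem apply to it, and (b) the local-to-global bookkeeping, since the mollification in the proof of Proposition~\ref{prop-char} is a chart-by-chart construction while the Stokes property is demanded for arbitrary cycles; your subdivision-and-cancellation argument handles this correctly, and your uniqueness remark (shrinking $2$-chains around a point, using $\partial\partial=0$ so that every small $2$-chain qualifies) legitimately pins down $d(\phi\eta)$ as $d\phi\wedge\eta+\phi\,d\eta$. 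I see no gap; the only stylistic remark is that facts (i) and (ii) become redundant if one arranges a single global approximating sequence (e.g.\ via a partition of unity subordinate to the charts) and then cites Proposition~\ref{prop-char} once.
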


Of course if $\phi$ is nowhere $0$ from the point of view of integrability this construction does not change anything since
$\eta \wedge d\eta >0$ implies $\phi \eta \wedge d(\phi \eta) >0$ and similarly for being equal to $0$. The importance of this lemma however lies in the fact that it allows us to paste together local elements of $\Omega^1_d(M)$ (which were shown to exist above). We now explain this. Let $\{U_i\}_{i=1}^{\infty}$ be a collection of local coordinate neighbourhoods that cover $M$. Assume they are equipped with local differential forms $\alpha_i$ defined on $U_i$ which are elements of $\Omega^1_d(V_i) \setminus \Omega^1_1(V_i)$ for some $V_i  \subset U_i$ and a partition of unity $\{\psi_i\}_{i=1}^{\infty}$ such that $\psi_i|_{V_i}=1$. As a direct corollary of previous lemma (and the finiteness of overlapping partition of unity cover elements) we obtain

\begin{lem} The 1-form defined by $\eta = \sum_{i=1}^{\infty} \psi_i \eta_i$ is an element of $\Omega^1_d(M)$ $\setminus \Omega^1_1(M)$ with continuous exterior differential $d\eta = \sum_{i=1}^{\infty} d\psi_i \wedge \eta_i + \psi_i d\eta_i$.
\end{lem}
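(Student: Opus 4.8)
The statement is in essence the assertion that $\Omega^1_d(M)$ is stable under multiplication by $C^1$ functions (the previous lemma) together with stability under locally finite sums, so the plan is to reduce everything to these two facts. First I would invoke paracompactness of $M$ to record that the partition of unity $\{\psi_i\}_{i=1}^{\infty}$ is locally finite and that each $\operatorname{supp}\psi_i$ sits inside $V_i$; consequently, in a neighbourhood of any point only finitely many of the $\psi_i\eta_i$ are nonzero, so the series $\eta=\sum_i\psi_i\eta_i$ and $\beta:=\sum_i(d\psi_i\wedge\eta_i+\psi_i d\eta_i)$ each define, termwise, a genuine continuous $1$-form and $2$-form on $M$ (extending every $\psi_i\eta_i$ and $d\psi_i\wedge\eta_i+\psi_i d\eta_i$ by zero off $\operatorname{supp}\psi_i$). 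By the previous lemma, on $V_i$ the form $\psi_i\eta_i$ lies in $\Omega^1_d$ with exterior differential $d\psi_i\wedge\eta_i+\psi_i d\eta_i$, while off $\operatorname{supp}\psi_i$ both vanish identically; so each $\psi_i\eta_i$ is a global element of $\Omega^1_d(M)$.

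Next I would verify the Stokes relation for $\eta$ and $\beta$. Given a $1$-cycle $Y$ bounding a $2$-chain $H$, the image of $H$ is compact, hence meets only finitely many of the sets $\operatorname{supp}\psi_i$; subdividing $H$ finely enough (a Lebesgue-number argument for the cover $\{V_i\}$) that each resulting cell lies in a single $V_i$, applying the termwise identity of the previous lemma to each $\psi_i\eta_i$ over each cell, and summing — the interior faces cancelling — one gets $\int_Y\eta=\int_H\beta$. Hence $\eta\in\Omega^1_d(M)$ with $d\eta=\beta=\sum_i(d\psi_i\wedge\eta_i+\psi_i d\eta_i)$. For the remaining claim $\eta\notin\Omega^1_1(M)$: on $V_i$ the hypothesis $\psi_i|_{V_i}=1$ forces, via $\sum_j\psi_j\equiv 1$ and $\psi_j\ge 0$, that $\psi_j|_{V_i}=0$ for $j\neq i$, so $\eta|_{V_i}=\eta_i|_{V_i}$, which is not $C^1$ because $\eta_i\in\Omega^1_d(V_i)\setminus\Omega^1_1(V_i)$; thus $\eta$ fails to be $C^1$, already on $V_i$.

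The one step that is not purely mechanical is the passage from the local, chart-by-chart Stokes identities to a single global one, that is, the subdivision argument: one must check that an arbitrary $2$-chain can be refined into cells each lying in one $V_i$ and that the refined $2$-chain still bounds the correspondingly subdivided $1$-cycle with matching orientations. This is routine but is the place where care is needed. If one prefers to sidestep it entirely, the same conclusion follows directly from Proposition \ref{prop-char}: for each $i$ choose (via that proposition) $C^1$ forms $\eta_i^k\to\eta_i$ with $d\eta_i^k\to d\eta_i$ locally uniformly, put $\eta^k:=\sum_i\psi_i\eta_i^k$, which is a locally finite and hence $C^1$ sum, and observe that $\eta^k\to\eta$ and $d\eta^k=\sum_i(d\psi_i\wedge\eta_i^k+\psi_i d\eta_i^k)\to\beta$ locally uniformly, so that Proposition \ref{prop-char} yields $\eta\in\Omega^1_d(M)$ with $d\eta=\beta$.
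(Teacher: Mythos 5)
Your proof is correct and follows essentially the same route as the paper, which simply declares the lemma a direct corollary of the preceding lemma on $C^1$ multiples together with the local finiteness of the partition of unity; you supply the details the paper leaves implicit (termwise Stokes identities summed over the finitely many indices whose supports meet a given compact chain, or alternatively the approximation argument via Proposition \ref{prop-char}, either of which works). The one caveat is inherited from the paper's own hypotheses rather than introduced by you: for each $\psi_i\eta_i$ extended by zero to lie in $\Omega^1_d(M)$ one needs $\operatorname{supp}\psi_i$ to be contained in the set where $\eta_i$ has a continuous exterior differential, which sits uneasily with the stated normalization $\psi_i|_{V_i}=1$ when the $V_i$ overlap --- but that tension is in the statement, not in your argument.
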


Of course even if every $\alpha_i$ is everywhere non-integrable on each $U_i$ we can only guarantee that $\eta$ would be non-integrable at certain points and not everywhere on $M$. This is similar to not being able to paste together local contact structures to form a global one (in general). It would be indeed very interesting to have an example of an element of $\Omega^1_d(M) \setminus \Omega^1_1(M)$, for some $M$, which is everywhere non-integrable. It would then also make more sense to generalize fundamental theorems of contact geometry to this setting.
A good place to start would be Anosov flows as it is known that the continuous center-stable and center-unstable bundles of Anosov flows can be approximated by smooth contact structures \cite{Mit95}.

We now prove an analytic property. For this we define the function $|\cdot|_d: \Omega^k_d(M) \rightarrow \mathbb{R}$, $|\beta|_d = \max\{|\beta|_{\infty}, |d\beta|_{\infty}\}$ where we assume that $M$ is compact (or if not that we work with only compactly supported differential forms).

\begin{lem}The space $\Omega^n_d(M)$ equipped with $|\cdot|_d$ is a Banach space over $\mathbb{R}$. \end{lem}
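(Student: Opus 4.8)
The plan is to verify the norm axioms — which reduce, via linearity of the exterior differential, to the corresponding facts for $|\cdot|_\infty$ — and then to deduce completeness by identifying $(\Omega^n_d(M),|\cdot|_d)$ with the graph of the operator $\eta \mapsto d\eta$, which is closed thanks to the stability of the Stokes relation under uniform limits.

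First I would record that $\Omega^n_d(M)$ is a linear subspace of the space of continuous $n$-forms and that $d$ is linear on it: if $\eta_1,\eta_2 \in \Omega^n_d(M)$, then for every $n$-cycle $Y$ and every $(n+1)$-chain $H$ with $\partial H = Y$ one has $\int_Y(\eta_1+\eta_2) = \int_H(d\eta_1+d\eta_2)$ by additivity of the integral, so $\eta_1+\eta_2 \in \Omega^n_d(M)$ with $d(\eta_1+\eta_2)=d\eta_1+d\eta_2$, and scalar multiples are handled the same way. The form $d\eta$ attached to $\eta$ by Definition \ref{defn-continuousexteriordifferential} is moreover unique, since two continuous $(n+1)$-forms having the same integral over every $(n+1)$-chain must coincide; hence $d$ is a well-defined linear map. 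Granting this, $|\lambda\eta|_d = |\lambda|\,|\eta|_d$ and $|\eta_1+\eta_2|_d \leq |\eta_1|_d + |\eta_2|_d$ follow entrywise from the same properties of $|\cdot|_\infty$, while $|\eta|_d = 0$ forces $|\eta|_\infty = 0$, i.e.\ $\eta = 0$. So $|\cdot|_d$ is a norm.

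For completeness, I would take a $|\cdot|_d$-Cauchy sequence $(\eta_k)$. Then $(\eta_k)$ is Cauchy in the sup norm on continuous $n$-forms and $(d\eta_k)$ is Cauchy in the sup norm on continuous $(n+1)$-forms; since $M$ is compact (or one restricts to compactly supported forms) both of these spaces are complete under $|\cdot|_\infty$, a uniform limit of continuous forms being continuous, so $\eta_k \to \eta$ and $d\eta_k \to \gamma$ uniformly for some continuous $\eta,\gamma$. It then remains only to see that $\eta \in \Omega^n_d(M)$ with $d\eta = \gamma$, and this is precisely Proposition \ref{prop-char} applied with $\eta^k = \eta_k$; alternatively, directly: for each $n$-cycle $Y$ bounding an $(n+1)$-chain $H$, the images of $Y$ and $H$ are compact, so the uniform convergence allows passing to the limit in $\int_Y \eta_k = \int_H d\eta_k$ to obtain $\int_Y \eta = \int_H \gamma$, which is the defining property making $\gamma = d\eta$. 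Finally $|\eta_k - \eta|_d = \max\{|\eta_k-\eta|_\infty,\,|d\eta_k-\gamma|_\infty\} \to 0$, so $\eta_k \to \eta$ in $\Omega^n_d(M)$.

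The only genuinely non-formal step is this passage to the limit inside the Stokes relation, which is exactly where compactness is used: it guarantees that $d\eta_k \to \gamma$ uniformly on the image of each fixed chain $H$ and that $\eta_k \to \eta$ uniformly on each $Y$. In operator-theoretic language the argument says that the graph of $d$ is closed inside the product of the two (Banach) spaces of continuous forms, and $(\Omega^n_d(M),|\cdot|_d)$ is that graph with its graph norm — hence a Banach space. Everything else is routine bookkeeping.
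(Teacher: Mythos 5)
Your proof is correct and follows essentially the same route as the paper: both pass to uniform limits of $\eta_k$ and $d\eta_k$ in the sup norm and then take the limit inside the Stokes relation $\int_Y \eta_k = \int_H d\eta_k$ to identify the limit of the $d\eta_k$ as the continuous exterior differential of the limit of the $\eta_k$. The additional details you supply (linearity and uniqueness of $d$, the closed-graph framing) are correct elaborations of steps the paper dismisses as routine.
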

\begin{proof} It is easy to establish that $|\cdot|_d$ is a norm. Now assume $\beta^k \in \Omega^n_d(M)$ is a Cauchy sequence with respect to the given norm. This means that $\beta^k$ and $d\beta^k$ are Cauchy sequences with respect to supnorm over $M$. This means that $\beta^k$ converges uniformly to some $n-$form $\beta$ and $d\beta^k$ converges uniformly to some $n+1-$form $\alpha$.
 Then for any $n+1$ chain $S$ bounded by some $n$ chain $c$, we have by uniform convergence
$$
\int_c \beta = \lim_{k \rightarrow \infty} \int_c \beta^k = \lim_{k \rightarrow \infty} \int_S d\beta^k =   \int_S \alpha.
$$
This means that $\alpha$ is the continuous exterior differential of $\beta$
so $\beta \in (\Omega^n_d(M), |\cdot|_{d})$.
\end{proof}

Finally we prove an algebraic property.
\begin{lem}\label{lem-dd=0} $d$ maps $\Omega^n_d(M)$ to $\Omega^{n+1}_d(M)$ such that $d^2=0$. In particular the sequence $0 \rightarrow \dots \Omega^n_d(M) \rightarrow \Omega^{dim(M)}_d(M) \rightarrow 0$ is exact. \end{lem}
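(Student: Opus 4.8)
The statement splits into two parts: that $d$ is a well-defined map $\Omega^{n}_{d}(M)\to\Omega^{n+1}_{d}(M)$ with $d^{2}=0$, so that $(\Omega^{\bullet}_{d}(M),d)$ is a cochain complex, and the exactness. For the latter one must be a little careful about degree $0$: the kernel of $d$ on $\Omega^{0}_{d}(M)$ is the space of locally constant functions (a continuous $f$ whose Stokes exterior differential vanishes satisfies $f(q)-f(p)=0$ along every path), so the exact sequence one actually proves is the augmented one $0\to\mathbb{R}\to\Omega^{0}_{d}(M)\to\Omega^{1}_{d}(M)\to\cdots\to\Omega^{\dim M}_{d}(M)\to0$ on a contractible $M$ — in particular on the Euclidean boxes that are the standing domains of this paper — while on a general $M$ the content of ``exactness'' is that $H^{\bullet}$ of $(\Omega^{\bullet}_{d}(M),d)$ agrees with $H^{\bullet}_{\mathrm{dR}}(M)$.

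For the first part I would use the mollification characterisation of Proposition \ref{prop-char}. Let $\eta\in\Omega^{n}_{d}(M)$ and pick smooth $n$-forms $\eta^{k}$ with $\eta^{k}\to\eta$ and $d\eta^{k}\to d\eta$ uniformly on compacta (locally by mollification, patched by a partition of unity as in the gluing lemmas above). Now $d\eta$ is continuous by hypothesis, and for any $(n+1)$-cycle $Y$, using uniform convergence on the compact $Y$ and the classical Stokes theorem for the smooth $\eta^{k}$, one gets $\int_{Y}d\eta=\lim_{k}\int_{Y}d\eta^{k}=\lim_{k}\int_{\partial Y}\eta^{k}=0$. Hence, by Definition \ref{defn-continuousexteriorderivative}, $d\eta$ has continuous exterior differential $0$, so $d\eta\in\Omega^{n+1}_{d}(M)$ and $d(d\eta)=0$.

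For exactness I would run the classical Poincaré homotopy operator through the same approximation machine. On a star-shaped Euclidean domain let $K$ be the standard radial integration operator, satisfying $dK+Kd=\mathrm{id}$ on smooth forms of positive degree and continuous for uniform convergence on a fixed compact exhaustion. If $\omega\in\Omega^{n}_{d}$ with $n\geq1$ and $d\omega=0$, mollify it to smooth $\omega^{k}$ with $\omega^{k}\to\omega$ and $d\omega^{k}\to d\omega=0$ uniformly on compacta; then $K\omega^{k}\to K\omega$ and $Kd\omega^{k}\to0$ uniformly, so $d(K\omega^{k})=\omega^{k}-Kd\omega^{k}\to\omega$ uniformly. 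By Proposition \ref{prop-char} this shows $K\omega\in\Omega^{n-1}_{d}$ with $d(K\omega)=\omega$, which is exactness in degree $n$ on a contractible domain; together with the computation of $\ker d$ in degree $0$ this yields the augmented exact sequence. For a general manifold one localises: the assignment $U\mapsto\Omega^{n}_{d}(U)$ is a fine sheaf (it admits partitions of unity by the gluing lemma), and the local Poincaré lemma just shown makes $0\to\mathbb{R}_{M}\to\Omega^{0}_{d}\to\Omega^{1}_{d}\to\cdots$ a fine resolution of the constant sheaf, whence $H^{\bullet}$ of the global complex equals $H^{\bullet}_{\mathrm{dR}}(M)$.

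The only delicate point — the main obstacle — is verifying that $K\omega$ genuinely lands back in $\Omega^{\bullet}_{d}$: what has to be checked is not merely that a candidate differential is continuous, but that the \emph{Stokes} relation of Definition \ref{defn-continuousexteriorderivative} holds for $K\omega$, and this is exactly what the approximation argument buys, given that the approximants are smooth so that the classical Stokes theorem and the identity $dK+Kd=\mathrm{id}$ apply verbatim. One should also confirm that $K$ really commutes with the $C^{0}$ limit in the sense used (continuity of $K$ on compacta) and, in the non-contractible case, that the local homotopies patch compatibly with the partition-of-unity construction already in place; both are routine once the fine-resolution argument is set up.
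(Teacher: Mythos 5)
Your argument for the first assertion is essentially the paper's: both pass through Proposition \ref{prop-char}, approximate $\eta$ by smooth forms $\eta^k$ with $\eta^k\to\eta$ and $d\eta^k\to d\eta$ uniformly, and conclude from $dd\eta^k=0$ (equivalently, from $\int_Y d\eta=\lim_k\int_Y d\eta^k=0$ over cycles $Y$, which is your phrasing) that $d\eta$ has continuous exterior differential $0$, hence lies in $\Omega^{n+1}_d(M)$. Where you genuinely diverge is on the word ``exact'': the paper's proof establishes only $d^2=0$, i.e.\ that $(\Omega^{\bullet}_d(M),d)$ is a cochain complex, and offers no argument for exactness in the homological sense --- indeed the displayed sequence cannot be literally exact on a general $M$, since (as you observe) its cohomology computes $H^{\bullet}_{\mathrm{dR}}(M)$, and the kernel of $d$ in degree $0$ is the locally constant functions. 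Your additional Poincar\'e-lemma argument --- pushing the radial homotopy operator $K$ through the $C^0$ approximation so that $d(K\omega^k)=\omega^k-Kd\omega^k\to\omega$ uniformly, whence Proposition \ref{prop-char} gives $K\omega\in\Omega^{n-1}_d$ with $dK\omega=\omega$ --- is correct and is exactly the kind of verification needed to keep $K\omega$ inside $\Omega^{\bullet}_d$; the fine-resolution upgrade to general $M$ via the paper's gluing lemma is also sound. So your proposal proves strictly more than the paper's proof does: the paper's ``exact'' should be read as ``forms a complex,'' and your version supplies the honest local exactness statement (augmented by $\mathbb{R}$ in degree $0$, on contractible domains) that the literal wording would demand.
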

\begin{proof} Let $\eta$ be in $\Omega^n_d(M)$ and $d\eta$ be its continuous exterior differential.
Then there exists a sequence of $C^1$ $k+1$ forms $d\eta^k$ that converges to $d\eta$. Moreover $dd\eta^k=0$. Therefore by Proposition \ref{prop-char} $d\eta$ has continuous exterior differential $dd\eta=0$. So $d\eta \in \Omega^{n+1}_d(M)$ and $dd\eta=0$.  \end{proof}

\subsection{Integrability of Bunched Partially Hyperbolic Systems}
In this subsection we give a tentative application to dynamical systems. It is tentative because although we state a novel integrability theorem for a class of bundles that arise in dynamical systems, we can not yet construct any examples that satisfy the properties. However we decided to include it in this paper first of all because it conveys the potential of continuous sub-Riemannian geometry for applications and secondly if the generalizations stated in Section \ref{sec-generalizations} can be carried out then it will most likely be possible to improve this theorem and construct examples of dynamical systems that satisfy it.

Let $M$ be a compact Riemannian manifold of dimension $n+1$ and $f: M \rightarrow M$ is a diffeomorphism.
Assume moreover that there exists a continuous splitting $T_xM = E^s_x \oplus E^c_x \oplus E^u_x$, each of which is invariant under $Df_x$. This splitting is called partially hyperbolic if there exists constants $K, \lambda_{\sigma}, \mu_{\sigma}>0$ for $\sigma = s,c,u$
such that $\mu_{s} < \lambda_{c}$, $\mu_{s} \leq 1$, $\mu_c < \lambda_u$, $\lambda_u>1$ and for all $\sigma = s,c,u$, $\ i>0 \ (i \in \mathbb{Z}^+),$ $x\in M$ and $v_{\sigma} \in T_xM$ such that $|v_{\sigma}|=1$
$$
 \frac{1}{K}\mu^i_{s} \leq |Df^i v_{\sigma}| \leq K\lambda^i_{\sigma}.
$$
This basically means that under iteration of $f$, the $Df$ expands $E^u$ exponentially and contracts $E^s$ exponentially while the behavior of $E^c$ is in between the two. Although these bundles might be just H\"older continuous (see \cite{HasWil99}), a well known property of such a system is that $E^u$ and $E^s$ are uniquely integrable into what are called as unstable and stable manifolds. Given $p \in U$, we denote the connected component of the stable and unstable manifold in $U$ that contains $p$ as $\mathcal{W}^u_p$ and $\mathcal{W}^s_p$ and call them local stable and unstable manifolds. In general
$E^{c}$, $E^{cs}=E^{c} \oplus E^s$ or  $E^{cu}=E^{c} \oplus E^u$ maybe be non-integrable both in the case the bundles are continuous (see \cite{HerHerUre15}) and or differentiable (see \cite{BurWil07}). The integrability of these bundles play an important role in classification of the dynamics, see for instance \cite{HamPot13}. It is our aim to apply now Theorem \ref{thm-ballbox} to get a novel criterion for integrability of these bundles under additional assumptions on geometry and dynamics.

A dynamical assumption that we will make is center bunching. Conditions like center bunching appears quite commonly in studies of dynamical systems. See for instance \cite{BurWil10} where it plays an important role for ergodicity.  A system is called center-bunched if $\frac{\lambda^2_c}{\mu_u}<1$. It means that the expansion in the unstable direction strongly dominates the expansion in center (as opposed to the definition of partially hyperbolic system where one only has $\frac{\lambda_c}{\mu_u}<1$). In \cite{BurWil07} Theorem 4.1, the authors prove that if $E^c$ and $E^s$ are $C^1$ and center bunched, partially hyperbolic then $E^{cs}$ is uniquely integrable. s far as we are aware there is no general result on integrability of such continuous bundles that do not make any assumptions on the differentiable and topological properties of the manifold $M$ (see for instance \cite{BriBurIva09} where they assume $M$ is a torus or \cite{HamPot13} where they have assumptions on the fundamental group of the manifold). An integrability theorem for continuous bundles that only make assumptions on the constants above would indeed be quite useful. Since being a partially hyperbolic system and center bunching are preserved under $C^1$ perturbations of $f$ they constitute an open set of examples (in $C^1$ topology) inside partially hyperbolic systems.

Our aim is to make one small step towards an integrability condition for continuous bundles that relies only on the constants. The only place where differentiability is required in the proof of the theorem in \cite{BurWil07} is where certain sub-Riemannian properties (more specifically the smaller box inclusion in the Ball-Box Theorem) are required. Thus once these properties are guaranteed then the proof easily carries through.

\begin{thm} Assume $f: M \rightarrow M$ is a diffeomorphism of a compact manifold which admits a center bunched partially hyperbolic splitting $T_xM = E^s_x \oplus E^c_x \oplus E^u_x$ where $dim(E^u_x)=1$. Assume moreover that $\mathcal{A}^1_0(E^{cs})$ admits a continuous exterior differential. Then $E^{cs}$ is uniquely integrable with a $C^1$ foliation.
\end{thm}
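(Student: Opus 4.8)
The plan is to revisit the proof of Theorem~4.1 of \cite{BurWil07} and to replace its single use of $C^1$ regularity by Theorem~\ref{thm-ballbox}. In that proof $E^s$ is uniquely integrable for free, and the whole issue is to produce, through every point, a genuine $C^1$ integral manifold of $E^{cs}$; the only step where the full $C^1$ regularity of $E^{cs}$ enters is a sub-Riemannian estimate, supplied in the smooth/$C^1$ setting by the Ball-Box theorem of \cite{Gro96}, to the effect that $E^{cs}$-admissible loops of length $\epsilon$ close up, in the transverse (here one-dimensional, $E^u$) direction, to within a controlled amount. Since $\dim E^u=1$ the bundle $E^{cs}$ is a continuous corank~$1$ bundle, and by hypothesis $\mathcal{A}^1_0(E^{cs})$ carries a continuous exterior differential; it also has a modulus of continuity $\omega$ (H\"older suffices, which is the generic situation, see \cite{HasWil99}). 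So Theorem~\ref{thm-ballbox} is available at every point where $E^{cs}$ is non-integrable and provides exactly the inclusions needed to feed that step. The only structural change is that, because Theorem~\ref{thm-ballbox} presumes non-integrability, the argument must be organized as a dichotomy rather than a direct construction.

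First I would dispose of the integrable case: if the local generator $\eta$ of $\mathcal{A}^1_0(E^{cs})$ satisfies $\eta\wedge d\eta\equiv 0$, then $E^{cs}$ is uniquely integrable by the Frobenius theorem in the continuous-exterior-differential category (\cite{Har64}; alternatively one builds the leaves from the adapted basis $\{X_i\}$ of $E^{cs}$ using Proposition~\ref{prop-uniquebasis} together with $d\eta|_{E^{cs}}=0$), and its leaves, being tangent to the continuous bundle $E^{cs}$, are $C^1$ and form a foliation by the uniqueness in Proposition~\ref{prop-uniquebasis}. So assume instead that $E^{cs}$ is non-integrable at some $p_0$ in the sense of Definition~\ref{defn-integrable}. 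Choose a $C^1$ adapted coordinate system around $p_0$ as in Theorem~\ref{thm-ballbox}, with the $y$-axis along $E^u_{p_0}$; the inner inclusion $\mathcal{B}_2(0,K_1,\frac{\epsilon}{4d_g})\subset B_{E^{cs}}(0,\epsilon)$ then produces, for every small $\epsilon$, a point $q_\epsilon$ with $d_{E^{cs}}(p_0,q_\epsilon)\le\epsilon$ whose transverse displacement from $p_0$ — measured along the $f$-invariant unstable foliation, which near $p_0$ is uniformly comparable to the $y$-coordinate — is at least of order $\epsilon^2$.

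Against this I would put the dynamical estimate forcing the transverse displacement of every $q\in B_{E^{cs}}(p_0,\epsilon)$ to be $o(\epsilon^2)$. Given such a $q$, take an $E^{cs}$-admissible path of length $\le 2\epsilon$ from $p_0$ to $q$; since $E^{cs}$ is $Df$-invariant, its image under $f^n$ is again $E^{cs}$-admissible, of length at most $CK\hat\lambda^n\epsilon$ with $\hat\lambda=\max\{\lambda_c,\lambda_s\}$. Projecting $q$ along its unstable leaf onto a fixed $C^1$ transversal through $p_0$ (for instance the approximating manifold $\mathcal{W}_{\epsilon_0}$ of Theorem~\ref{thm-ballbox}) and using that $f^{-n}$ contracts unstable-leaf distances by at least $\frac{1}{K}\mu_u^{-n}$, one bounds the transverse displacement by $C'(\hat\lambda/\mu_u)^n\epsilon$; choosing $n=n(\epsilon)$ so that $\hat\lambda^n\epsilon$ is a fixed small constant keeps all pushed-forward curves in a controlled chart and makes $\hat\lambda^n$ comparable to $\epsilon^{-1}$, so that the displacement is of order at most $\epsilon^{\log\mu_u/\log\hat\lambda}$. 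In the bunched regime $E^s$ is strongly contracted, so $\hat\lambda=\lambda_c$, and the center bunching hypothesis $\lambda_c^2<\mu_u$ makes the exponent strictly greater than $2$. For small $\epsilon$ this contradicts the lower bound of the previous paragraph. Hence no such $p_0$ exists, $\eta\wedge d\eta\equiv 0$, and we are back in the integrable case, which yields the $C^1$ foliation.

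I expect the main obstacle to be the interplay of the two estimates rather than either one alone. Because Theorem~\ref{thm-ballbox} only applies at non-integrable points, one is forced into the contradiction above, and one must track the dynamical closing-up estimate under $f^n$ precisely enough that the non-sharp, $\omega$-dependent error terms built into Propositions~\ref{prop-uniquebasis} and \ref{prop-stokessubriemann} (themselves $o(\epsilon^2)$ but not zero) stay below the quadratic lower bound coming from the Ball-Box; the strict inequality $\lambda_c^2<\mu_u$ is exactly what provides that margin. A subsidiary nuisance is reconciling the several transverse quantities — the adapted-coordinate $y$, the distance to $\mathcal{W}_{\epsilon_0}$, and unstable-leaf distance — which are mutually comparable only up to chart-dependent constants. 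Finally, as already noted in the text, the hypothesis that $\mathcal{A}^1_0(E^{cs})$ carries a continuous exterior differential is itself open, so the theorem is conditional on it.
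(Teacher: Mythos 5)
Your proposal is correct and follows essentially the same route as the paper: assume a point of non-integrability, invoke the inner inclusion of Theorem \ref{thm-ballbox} in a $C^1$ adapted coordinate system (with $\partial_y$ chosen close to $E^u$) to get the quadratic lower bound $d(p,q)\ge c|\kappa|^2$ required in the proof of Theorem 4.1 of \cite{BurWil07}, derive the contradiction with the center-bunched dynamical estimate, conclude $\eta\wedge d\eta\equiv 0$, and finish with Hartman's integrability theorem. The only difference is presentational: the paper delegates the dynamical closing-up estimate entirely to \cite{BurWil07}, whereas you sketch it explicitly.
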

\begin{proof} As in \cite{BurWil07}, one starts by assuming that there exists a point $p$ where $\eta \wedge d\eta(p)>0$ where
$E^{cs}= ker(\eta)$ and $d\eta$ is the continuous exterior differential. Then by Theorem \ref{thm-ballbox}, there exists a $C^1$ adapted coordinate system and a neighbourhood $U$ of $p$ on which every point $q$ on the local unstable manifold $\mathcal{W}^u_p$ of $p$ can be connected to $p$ by a length parametrized admissible path $\kappa(t)$ such that $\kappa(0)=p$, $\kappa(\ell(\kappa))=q$ and $d(p,q) \geq c|\kappa|^2$. To show that this can be done,
we choose first a smooth adapted coordinate system at $p$ for $E^{sc}$ so that the unstable bundle is very close to the $\partial_y$ direction (since both are transverse to $E^{sc}$ this is possible). By choosing it close enough, we can make sure that when we pass to the $C^1$ adapted coordinates using the transformation given in equation \eqref{eq-c1adapted}, the unstable direction and $\partial_y$ direction are still close enough so that in a small enough neighborhood $U$ and for any $q=(x,y) \in \mathcal{W}^u_p$,  $|x^i| \leq \delta|y|$ where $\delta<\frac{1}{2n}$. Then to apply the Ball-Box Theorem in this $C^1$ adapted coordinate system,
for $\epsilon$ small enough, we pick $q=(x,y) \in \mathcal{B}_2(0,K_1,\epsilon)$ so that $|y| = K_1\epsilon^2$. But the Ball-Box Theorem tells us that there exists a length parametrized admissible curve $\kappa$ such that $\ell(\kappa) \leq \epsilon$, $\kappa(0)=p$ and $\kappa(\ell(\kappa))=q$. Since
$|x^i| \leq \frac{K_1}{2n}|y|$, we have $d(q,p) \geq \frac{1}{2}|y| = \frac{K_1}{2}{\epsilon^2}$ (where in this coordinate system we remind that $p=0$). Therefore for some constant $c$, $d(q,p) \geq c|\kappa|^2$.
Thus the conditions 1 to 4 appearing in the proof of Theorem 4.1 of \cite{BurWil07} are fully satisfied and the rest of the analysis only depends on the dynamics of $f$. So by the same contradiction obtained there we get that $\eta \wedge d\eta=0$ everywhere. Then by the integrability theorem of Hartman in \cite{Har64}, this means that $E^{cs}$ integrates to a unique $C^1$ foliation. \end{proof}

\section{Some Perspectives Regarding Generalizations} \label{sec-generalizations}

In this section we ask some questions that are related to generalizations of the theorems stated in this paper.

\subsection{Relaxing Existence of Continuous Exterior Differential: Exterior Regularity}

One meaningful way to relax the condition on existence of a continuous exterior differential of $\eta$ is to require the following
\begin{itemize}
  \item There exists a sequence of $C^1$ differential forms $\eta^k$ which converge uniformly to $\eta$
        such that $|d\eta^k|_{\infty} \leq C$ for all $k$.
\end{itemize}

Then the non-integrability at $p$ condition could be stated as
\begin{itemize}
  \item There exists a constant $c>0$ and a neighbourhood $U$ of $p$ such that for all $k$ $(\eta^k \wedge d\eta^k)_q>c$ for all $q \in U$.
\end{itemize}

In this case we will be working with a sequence of differential forms $\eta^k$ and $d\eta^k$ acting on objects from $\Gamma(\Delta)$ all of which is defined on an adapted coordinate system with respect to $\Delta$ on some neighbourhood $U$. Once $U$ is fixed, one requires that its size does not change with respect to $\eta^k$. That is we should be able to satisfy the conditions given in subsection \ref{sssection-fixU} on a fixed $U$ for all $\eta^k$. This is the first reason why we require non-integrality on a fixed neighbourhood $U$ of $p$ since otherwise $\eta^k$ could be non-integrable on smaller and smaller domains whose size shrink to $0$ forcing us to decrease the size of $U$ as well. We also see that the condition $|d\eta^k|_{\infty} \leq C$ is important in being able to satisfy the other requirements given in subsection \ref{sssection-fixU} with respect to all $\eta^k$ on a fixed domain $U$.

It has already been shown in previous work \cite{LuzTurWar16} that under this condition one of the crucial lemmas which is \ref{prop-uniquebasis} holds true. Moreover one has that for every k-cycle $Y$ and k+1 chain $H$ bounded by it $\int_Y \eta = \lim_{k\rightarrow \infty}\int_H d\eta^k$ and also since $\eta^k$ converges to $\eta$ and $|d\eta^k|_{\infty}$ is uniformly bounded then $(\eta \wedge d\eta^k)_q$ can be made arbitrarily close to $(\eta^k \wedge d\eta^k)_q$ by taking $k$ large enough and hence non-zero. So one simply replaces $d\eta(X_i,X_j)$ with $d\eta^k(X_i,X_j)$. Although $\eta^k$ does not annihilate curves tangent to $X_i$, since it converges to $\eta$, this difference can be made arbitrarily small by taking $k$ large enough and the analysis will carry through.

This generalization, if done, may allow one to replace the example \ref{prop-example} which was $\eta = a(x,y,z)dy + b(x,y,z)dx + c(x,y,z)dz$, by a more general one where $b$ is only Lipschitz in $y$ and $z$, $c$ is only Lipschitz in $y$ and $x$ and $a$ is only Lipschitz in $x$ and $z$ (instead of the $C^1$ assumption). But note that the non-integrability condition will be more tricky to check.

\subsection{Higher Coranks}

Assume now that $\Delta$ is a corank $m$ tangent subbundle in a $m+n$ dimensional manifold. As pointed out after equation \eqref{eq-adapted}, we can still find an adapted basis $X_i$ for such a bundle. We also assume that on some $U$, $\mathcal{A}^1_0(\Delta)$ is spanned by $\{\eta_i\}_{i=1}^m$ with exterior differential $\{d\eta^i\}_{i=1}^m$. The case where
\begin{equation}\label{eq-nonint}
(\eta^1 \wedge \dots  \eta^m \wedge d\eta^{\ell})(p)>0,
\end{equation}
for all $\ell = 1,\dots ,m$ represents the case of higher co-rank but still step 2 completely non-integrable case. In this case the transversal direction will not be one dimensional so the proof may become conceptually harder to carry out. But it seems to the authors that the main change appearing will only be the replacement of terms $|\eta(\partial_y)|_{\infty}$ and $|\eta(\partial_y)|_{\inf}$ with $|\eta_1\wedge \dots \wedge \eta_n|_{\infty}$ and $m(\eta_1\wedge \dots \wedge \eta_n)_{\inf}$.

Higher step cases might be impossible to carry out in full generality however we believe that there might be a subclass on which this approach may be generalized. Assume $\Delta$ is a corank 1 bundle so that $\mathcal{A}_1^0(\Delta)$ is equipped with a continuous exterior differential $\{V,\eta_i, d\eta_i\}$ for $i=1,..,m$. Let $\{Y_i\}_{i=1}^m$ be a set of vector-fields that are inside $\text{span}\{\frac{\partial}{\partial y^i}\}_{i=1}^m$ and such that $\eta_j(Y_i)=\delta_{ij}$. Let also $\{X_i\}_{i=1}^n$ be the usual adapted basis. Then we can define the Lie bracket of $X_i$ via
\begin{equation}\label{eq-continousliebracket}
[X_i,X_j] = -\sum_{i=1}^m d\eta_i(X_j,Z_i)Y_i.
\end{equation}
Note that again by the form the adapted basis, any loop constructed from any pair of such vector-fields always stays in the $y$ plane. Now define $\Delta_0=\Delta$, $\Delta_1 = \Delta_{0} + [\Delta_0,\Delta_0]$. Assume $\mathcal{A}_1^0(\Delta_1)$ is equipped with an exterior differential $\{V,\eta_i, d\eta_i\}$ for $i=1,\dots,m-k_1$ for $1 \leq k_1 \leq m$. Then one can also define $[\Delta_1,\Delta_1]$ as above and then define  $\Delta_2 = \Delta_{1} + [\Delta_1,\Delta_1]$. Proceeding inductively with always the assumption of existence of continuous exterior differentials and the strict inclusion $\Delta_{i+1} \varsupsetneq \Delta_i$ (since $k_i \neq 0$) we obtain a chain
$\Delta_0 \varsubsetneq \Delta_1 \varsubsetneq \dots \varsubsetneq \Delta_{\ell}$ which terminates for some $\ell$ such that $\Delta_{\ell} = TM$. The meaningful question to ask then is whether if analogues of The Ball-Box and Chow-Rashevskii Theorem hold true in this case. This is very much akin to the requirements in \cite{MonMor12}, where for higher step cases, one requires higher order Lie brackets to be Lipschitz continuous. Of course finding an example of a bundle that satisfies the properties above will be substantially harder, so one might first try to find such an example based on the examples given in this paper before embarking on trying to prove the generalization.

\subsection{More Generally, H\"older Continuous Bundles} \label{subsec-holder}

Now we explain a fundamentally more difficult generalization which the authors think is true but are not able to prove yet. We want to remove both the existence of $d\eta$ and boundedness of $|d\eta^k|$ explained in the previous section all together so that the applicability range of this theorem increases greatly. Namely, assuming corank 1, we only want to impose the following: There exists a sequence of differential 1-forms $\eta^k$ that converge to $\eta$ uniformly and for some neighbourhood $U$ of $p$,
$(\eta^k \wedge d\eta^k)_q > 0$ for all $q \in U$. Note that we still have one fundamental equality satisfied:
For every k-cycle $Y$ and k+1 chain $H$ bounded by it $\int_Y \eta = \lim_{k\rightarrow \infty}\int_H d\eta^k$. This is of course just one important step of the analysis. We lose one crucial property, we lose the fact that the adapted basis $\{X_i\}$ are uniquely integrable. This brings about the problem of choosing certain integral curves to build something similar to the surface $\mathcal{W}_{\epsilon}$ that was used in the construction of the accessible neighbourhood. At this part, in corank $1$ the notion of maximal and minimal solutions can be of help to determine in a well defined way some objects similar to $\mathcal{W}_{\epsilon}$. The problems don't end here however. Note that in application of Stokes property with $d\eta^k$ we will need an estimate on objects like $d\eta^k(X_i,X_j)$. The fact that $|d\eta^k|$ might not be bounded may cause problems in conditions required in subsection \ref{sssection-fixU}. However it seems likely that with some restrictive relations between how fast $\eta^k$ converges and
how fast $|d\eta^k|$ may blow up these conditions can still be satisfied in certain cases. At this stage using for instance mollifications as the approximation could be useful as one can write down the relation between such terms more precisely, as was done in \cite{LuzTurWar16}. Although the authors are hopeful about this generalization, they are not completely sure whether if the analysis carries through or not. It will be subject of future works.

\medskip
S\.ina T\"urel\.i \\
\textsc{Imperial College, South Kensington, London}\\
 \textit{Email address:} \texttt{sinatureli@gmail.com}

\medskip Sina T\"ureli \\ \textsc{Imperial College London, South Kensington Campus, London} \\
 \textit{Email address:} \texttt{sinatureli@gmail.com}

\end{document}